\newcommand{\nc}{\newcommand}
\nc{\fg}{\mathfrak{f} } \nc{\vg}{\mathfrak{v} } \nc{\wg}{\mathfrak{w} }
\nc{\zg}{\mathfrak{z} } \nc{\ngo}{\mathfrak{n} } \nc{\kg}{\mathfrak{k} }
\nc{\mg}{\mathfrak{m} } \nc{\bg}{\mathfrak{b} } \nc{\ggo}{\mathfrak{g} } \nc{\eg}{\mathfrak{e} }
\nc{\ggob}{\overline{\mathfrak{g}} } \nc{\sog}{\mathfrak{so} }
\nc{\sug}{\mathfrak{su} } \nc{\spg}{\mathfrak{sp} } \nc{\slg}{\mathfrak{sl} }
\nc{\glg}{\mathfrak{gl} } \nc{\cg}{\mathfrak{c} } \nc{\rg}{\mathfrak{r} }
\nc{\hg}{\mathfrak{h} } \nc{\tg}{\mathfrak{t} } \nc{\ug}{\mathfrak{u} }
\nc{\dg}{\mathfrak{d} } \nc{\ag}{\mathfrak{a} } \nc{\pg}{\mathfrak{p} }
\nc{\sg}{\mathfrak{s} } \nc{\affg}{\mathfrak{aff} } \nc{\qg}{\mathfrak{q} } \nc{\lgo}{\mathfrak{l} }
\nc{\pca}{\mathcal{P}} \nc{\nca}{\mathcal{N}} \nc{\lca}{\mathcal{L}}
\nc{\oca}{\mathcal{O}} \nc{\mca}{\mathcal{M}} \nc{\tca}{\mathcal{T}}
\nc{\aca}{\mathcal{A}} \nc{\cca}{\mathcal{C}} \nc{\gca}{\mathcal{G}}
\nc{\sca}{\mathcal{S}} \nc{\hca}{\mathcal{H}} \nc{\bca}{\mathcal{B}}
\nc{\dca}{\mathcal{D}} \nc{\eca}{\mathcal{E}} \nc{\wca}{\mathcal{W}}
\nc{\vp}{\varphi} \nc{\ddt}{\tfrac{d}{dt}} \nc{\dsdt}{\tfrac{d^2}{dt^2}} \nc{\dds}{\frac{d}{ds}}
\nc{\dpar}{\frac{\partial}{\partial t}} \nc{\im}{\mathrm{i}}
\nc{\SO}{\mathrm{SO}} \nc{\Spe}{\mathrm{Sp}} \nc{\Sl}{\mathrm{SL}}
\nc{\SU}{\mathrm{SU}} \nc{\Or}{\mathrm{O}} \nc{\U}{\mathrm{U}} \nc{\Gl}{\mathrm{GL}}
\nc{\Se}{\mathrm{S}} \nc{\Cl}{\mathrm{Cl}} \nc{\Spin}{\mathrm{Spin}}
\nc{\Pin}{\mathrm{Pin}} \nc{\G}{\mathrm{GL}_n(\RR)} \nc{\g}{\mathfrak{gl}_n(\RR)}
\nc{\RR}{{\Bbb R}} \nc{\HH}{{\Bbb H}} \nc{\CC}{{\Bbb C}} \nc{\ZZ}{{\Bbb Z}}
\nc{\FF}{{\Bbb F}} \nc{\NN}{{\Bbb N}} \nc{\QQ}{{\Bbb Q}} \nc{\PP}{{\Bbb P}} \nc{\OO}{{\Bbb O}}
\nc{\vs}{\vspace{.2cm}} \nc{\vsp}{\vspace{1cm}} \nc{\ip}{\langle\cdot,\cdot\rangle}
\nc{\ipp}{(\cdot,\cdot)} \nc{\la}{\langle} \nc{\ra}{\rangle} \nc{\unm}{\tfrac{1}{2}}
\nc{\unc}{\tfrac{1}{4}} \nc{\und}{\frac{1}{16}} \nc{\no}{\vs\noindent}
\nc{\lam}{\Lambda^2(\RR^n)^*\otimes\RR^n} \nc{\tangz}{{\rm T}^{\rm Zar}}
\nc{\nor}{{\sf n}}  \nc{\mum}{/\!\!/} \nc{\kir}{/\!\!/\!\!/}
\nc{\Ri}{\tfrac{4\Ric_{\mu}}{||\mu||^2}} \nc{\ds}{\displaystyle}
\nc{\ben}{\begin{enumerate}} \nc{\een}{\end{enumerate}} \nc{\f}{\frac}
\nc{\lb}{[\cdot,\cdot]} \nc{\isn}{\tfrac{1}{||v||^2}}
\nc{\gkp}{(\ggo=\kg\oplus\pg,\ip)} \nc{\ukh}{(\ug=\kg\oplus\hg,\ip)}
\nc{\tgkp}{(\tilde{\ggo}=\kg\oplus\pg,\ip)}
\nc{\wt}{\widetilde}
\nc{\iop}{\mathtt{i}} \nc{\jop}{\mathtt{j}} 
\nc{\Hk}{H_{\kil}} \nc{\gk}{g_{\kil}}
\nc{\Hess}{\operatorname{Hess}} \nc{\ad}{\operatorname{ad}}
\nc{\Ad}{\operatorname{Ad}} \nc{\rank}{\operatorname{rk}}
\nc{\Irr}{\operatorname{Irr}} \nc{\End}{\operatorname{End}}
\nc{\Aut}{\operatorname{Aut}} \nc{\Inn}{\operatorname{Inn}}
\nc{\Der}{\operatorname{Der}} \nc{\Ker}{\operatorname{Ker}}
\nc{\Iso}{\operatorname{Iso}} \nc{\Diff}{\operatorname{Diff}}
\nc{\Lie}{\operatorname{L}} \nc{\tr}{\operatorname{tr}} \nc{\dif}{\operatorname{d}}
\nc{\sen}{\operatorname{sen}} \nc{\modu}{\operatorname{mod}}
\nc{\CRic}{\operatorname{PP}} \nc{\Cric}{\operatorname{P}} \nc{\Ricci}{\operatorname{Ric}}
\nc{\sym}{\operatorname{sym}} \nc{\herm}{\operatorname{herm}} \nc{\symac}{\operatorname{sym^{ac}}}
\nc{\symc}{\operatorname{sym^{c}}} \nc{\scalar}{\operatorname{Sc}}
\nc{\grad}{\operatorname{grad}} \nc{\ricci}{\operatorname{Rc}} \nc{\kil}{\operatorname{B}} \nc{\cas}{\operatorname{C}} \nc{\lic}{\operatorname{L}}
\nc{\Nor}{\operatorname{Norm}}  \nc{\ricc}{\operatorname{Rc^{c}}}
\nc{\Ricc}{\operatorname{Ric^{c}}} \nc{\ricac}{\operatorname{Rc^{ac}}}
\nc{\Ricac}{\operatorname{Ric^{ac}}} \nc{\Riem}{\operatorname{Rm}} \nc{\Sec}{\operatorname{Sec}}
\nc{\riccig}{\operatorname{ric^{\gamma}}} \nc{\mm}{\operatorname{m}} \nc{\Mm}{\operatorname{M}}
\nc{\Le}{\operatorname{L}} \nc{\tang}{\operatorname{T}}
\nc{\level}{\operatorname{level}} \nc{\rad}{\operatorname{r}}
\nc{\abel}{\operatorname{ab}} \nc{\CH}{\operatorname{CH}} \nc{\Cone}{{\mathcal C}} \nc{\CCone}{\operatorname{CC}} \nc{\CP}{{\mathcal P}}
\nc{\mcc}{\operatorname{mcc}} \nc{\Adj}{\operatorname{Adj}}
\nc{\Order}{\operatorname{O}}  \nc{\inj}{\operatorname{inj}} \nc{\proy}{\operatorname{pr}}
\nc{\vol}{\operatorname{vol}} \nc{\Diag}{\operatorname{Dg}} \nc{\Diagg}{\operatorname{Diag}}
\nc{\Spec}{\operatorname{Spec}} \nc{\Ima}{\operatorname{Im}} \nc{\Rea}{\operatorname{Re}}
\nc{\spann}{\operatorname{span}} \nc{\Aff}{\operatorname{Aff}} \nc{\E}{\operatorname{E}} \nc{\id}{\operatorname{id}} \nc{\dete}{\operatorname{det}} \nc{\Crit}{\operatorname{Crit}} \nc{\val}{\operatorname{val}}
\theoremstyle{plain}
\newtheorem{theorem}{Theorem}[section]
\newtheorem{proposition}[theorem]{Proposition}
\newtheorem{corollary}[theorem]{Corollary}
\newtheorem{lemma}[theorem]{Lemma}
\theoremstyle{definition}
\newtheorem{definition}[theorem]{Definition}
\theoremstyle{remark}
\newtheorem{remark}[theorem]{Remark}
\newtheorem{example}[theorem]{Example}
\title{Harmonic $3$-forms on compact homogeneous spaces}
\author{Jorge Lauret}  \author{Cynthia Will}
\address{FaMAF, Universidad Nacional de C\'ordoba and CIEM, CONICET (Argentina)}
\email{jorgelauret@unc.edu.ar} \email{cynthia.will@unc.edu.ar}
\thanks{This research was partially supported by grants from Univ. Nac. de C\'ordoba and Foncyt (Argentina)}
\date{\today}
\begin{document}

\maketitle

\begin{abstract}
The third real de Rham cohomology of compact homogeneous spaces is studied.  Given $M=G/K$ with $G$ compact semisimple, we first show that each bi-invariant symmetric bilinear form $Q$ on $\ggo$ such that $Q|_{\kg\times\kg}=0$ naturally defines a $G$-invariant closed $3$-form $H_Q$ on $M$, which plays the role of the so called Cartan $3$-form $Q([\cdot,\cdot],\cdot)$ on the compact Lie group $G$.  Indeed, every class in $H^3(G/K)$ has a unique representative $H_Q$.  Secondly, focusing on the class of homogeneous spaces with the richest third cohomology (other than Lie groups), i.e., $b_3(G/K)=s-1$ if $G$ has $s$ simple factors, we give the conditions to be fulfilled by $Q$ and a given $G$-invariant metric $g$ in order for $H_Q$ to be $g$-harmonic, in terms of algebraic invariants of $G/K$.  As an application, we obtain that any $3$-form $H_Q$ is harmonic with respect to the standard metric, although for any other normal metric, there is only one $H_Q$ up to scaling which is harmonic.  Furthermore, among a suitable $(2s-1)$-parameter family of $G$-invariant metrics, we prove that the same behavior occurs if $\kg$ is abelian: either every $H_Q$ is $g$-harmonic (this family of metrics depends on $s$ parameters) or there is a unique $g$-harmonic $3$-form $H_Q$ (up to scaling).  In the case when $\kg$ is not abelian, the special metrics for which every $H_Q$ is $g$-harmonic depend on $3$ parameters.
\end{abstract}

\tableofcontents

\section{Introduction}\label{intro}

The understanding of the de Rham cohomology $H(M)$ of a compact homogeneous space $M=G/K$ 
in terms of algebraic invariants of the groups $G$, $K$ and the embedding $K\subset G$ is a classical problem which has been studied by many renowned mathematicians, including E.\ Cartan, Chevalley, Eilenberg, Koszul, A.\ Weil, H.\ Cartan and Borel (see \cite{Brl}).  In this paper, we are interested in $H^3(M)$ over $\RR$, with a particular emphasis on finding an explicit description of $3$-forms which are harmonic with respect to a given $G$-invariant Riemannian metric.  Our interest comes from potential applications to the study of many well-known classes of geometric structures on $M$ involving closed, coclosed or harmonic $3$-forms.  

In the case when $M=G$ is a compact connected semisimple Lie group, it is well known that every class in $H^3(G)$ has a unique bi-invariant representative, which is necessarily of the form $\overline{Q}:=Q([\cdot,\cdot],\cdot)\in\Lambda^3\ggo^*$ for some bi-invariant symmetric bilinear form $Q$ on the Lie algebra $\ggo$ of $G$.  The third Betti number $b_3(G)$ is therefore the number of simple factors of $G$.  Moreover, any $\overline{Q}$ is harmonic with respect to any bi-invariant metric on $G$.  We show in \S\ref{LG-sec} that, beyond the bi-invariant context, the harmonicity of $\overline{Q}$ relative to a left-invariant metric on $G$ depends on tricky conditions in terms of the structural constants of $\ggo$.    

Let $M=G/K$ be a homogeneous space, where $G$ is a compact, connected and semisimple Lie group and $K$ is a connected closed subgroup.  It is easy to see that $b_1(G/K)=0$ and $b_2(G/K)=\dim{\zg(\kg)}$, where $\zg(\kg)$ is the center of the Lie algebra $\kg$ of $K$ (see \S\ref{preli}).  Concerning third cohomology, we found that there is also a canonical $G$-invariant closed $3$-form $H_Q$ attached to each bi-invariant symmetric bilinear form $Q$ on $\ggo$ such that $Q|_{\kg\times\kg}=0$, given by
$$
H_Q(X,Y,Z) := 4Q([X,Y],Z) - Q([X,Y]_\pg,Z) + Q([X,Z]_\pg,Y) - Q([Y,Z]_\pg,X),
$$
for all $X,Y,Z\in\pg$, where $\ggo=\kg\oplus\pg$ is any reductive decomposition (see \S\ref{H3-sec}).  An equivalent way to define $H_Q$ is by $\pi^*H_Q:=\overline{Q}+d\alpha_Q$, where $\pi:G\rightarrow G/K$ is the usual projection and $\alpha_Q\in\Lambda^2\ggo^*$ is given by 
$$
\alpha_Q|_{\kg\times\kg}=0, \qquad \alpha_Q|_{\pg\times\pg}=0, \qquad \alpha_Q(X,Z)=Q(X,Z), \qquad\forall X\in\pg,  \quad Z\in\kg. 
$$ 
Moreover, we prove that every class in $H^3(G/K)$ has a unique representative of the form $H_Q$ and so the third Betti number is given by
\begin{equation}\label{formb3}
b_3(G/K) = s - d_{G/K}, 
\end{equation}
where $d_{G/K}:=\dim\left\{ Q|_{\kg\times\kg}: Q\;\mbox{bi-invariant on}\;\ggo\right\}$ and 
$\ggo=\ggo_1\oplus\dots\oplus\ggo_s$ 
is a decomposition of $\ggo$ into simple ideals.  In particular, $b_3(G/K) = s$ if and only if $K$ is trivial.  
The authors believe that formula \eqref{formb3} must be known, although they were not able to find it in the literature.  In any case, it only takes one self-contained half page to prove it.   

Our aim in this paper is the study of the following natural problem:
\begin{quote}
Find the conditions to be fulfilled by a given bi-invariant symmetric bilinear form $Q$ (with $Q|_{\kg\times\kg}=0$) and a given $G$-invariant metric $g$ on $M=G/K$ in order for $H_Q$ to be $g$-harmonic, in terms of algebraic invariants of $G/K$.
\end{quote}

For simplicity, we consider the class of homogeneous spaces with the richest third cohomology (other than Lie groups), i.e., $b_3(G/K)=s-1$.  We also fix a decomposition $\kg=\zg(\kg)\oplus\kg_1\oplus\dots\oplus\kg_t$ in simple ideals.  We first show that such a class is characterized by the following algebraic condition on $G/K$ in the irreducible case (see Proposition \ref{al-car}), which we have called {\it aligned}: there exist $c_1,\dots,c_s>0$ such that
\begin{equation*}
\kil_{\ggo_i}(Z_i,W_i) = \tfrac{1}{c_i}\kil_\ggo(Z,W), \qquad\forall Z,W\in\kg, \quad i=1,\dots,s.
\end{equation*}   
In other words, the ideals $\kg_j$'s and the center are uniformly embedded on each $\ggo_i$ in some sense.  Here $\kil_\hg$ denotes the Killing form of a Lie algebra $\hg$.  It is easy to see that the aligned condition implies that $\kil_{\kg_j}=\lambda_j\kil_{\ggo}|_{\kg_j\times\kg_j}$ for some positive number $\lambda_j$ for all $j=1,\dots,t$ and $\pi_i(\kg)\simeq\kg$ for all $i=1,\dots,s$ (see Definition \ref{aligned} for further details).  

Note that $G/K$ is aligned and consequently $b_3(G/K)=s-1$ as soon as $\kg$ is simple (or one-dimensional) and $\pi_i(\kg)\ne 0$ for all $i=1,\dots,s$, as well as when $G=H\times\dots\times H$ ($s$-times, $s\geq 2$) and $K=\Delta L$ for any subgroup $L\subset H$ (see also Example \ref{ex3}).  In the aligned case, the condition $Q|_{\kg\times\kg}=0$ for a bi-invariant $Q$, say $Q=y_1\kil_{\ggo_1}+\dots+y_s\kil_{\ggo_s}$, is simply given by $\tfrac{y_1}{c_1}+\dots+\tfrac{y_s}{c_s}=0$.

\subsection{Main results} 
Let $M=G/K$ be an aligned homogeneous space with positive constants $c_1,\dots,c_s$ and $\lambda_1,\dots,\lambda_t$.  For any given bi-invariant metric 
$$
g_b=z_1(-\kil_{\ggo_1})+\dots+z_s(-\kil_{\ggo_s}), \qquad z_1,\dots,z_s>0,   
$$
we consider the $g_b$-orthogonal reductive decomposition $\ggo=\kg\oplus\pg$, the normal metric on $M$ determined by $g_b|_{\pg\times\pg}$ and a suitable $g_b$-orthogonal $\Ad(K)$-invariant decomposition 
$$
\pg=\pg_1\oplus\dots\oplus\pg_s\oplus\pg_{s+1}\oplus\dots\oplus\pg_{2s-1},
$$ 
see Proposition \ref{red-al} for more details.  As $\Ad(K)$-representations, $\pg_i$ is equivalent to the isotropy representation of the homogeneous space $G_i/\pi_i(K)$ for all $i=1,\dots,s$ and $\pg_j$ is equivalent to the adjoint representation $\kg$ for all $j=s+1,\dots,2s-1$ (in particular, an aligned $G/K$ is never multiplicity-free for $s\geq 3$, which makes computations much more difficult).  

We assume that none of the irreducible components of $\pg_1,\dots,\pg_s$ is equivalent to any of the simple factors of $\kg$ as $\Ad(K)$-representations and that either $\zg(\kg)=0$ or the trivial representation is not contained in any of $\pg_1,\dots,\pg_s$.  This implies that $\pg_1\oplus\dots\oplus\pg_{s}$ and $\pg_{s+1}\oplus\dots\oplus\pg_{2s-1}$ are $\omega$-orthogonal for any $\omega\in(\Lambda^2\pg^*)^K$ (see the paragraph containing \eqref{h} for more details on this assumption).  

\begin{theorem}\label{main-intro}
Given a $G$-invariant metric of the form
$$
g=x_1g_b|_{\pg_1\times\pg_1}+\dots+x_{2s-1}g_b|_{\pg_{2s-1}\times\pg_{2s-1}}, \qquad x_1,\dots,x_{2s-1}>0, 
$$ 
we set 
\begin{align*}
a_j:=&\dim{\kg}\tfrac{1}{c_{j+1}x_{j+1}^2}+Cas_0\left(\tfrac{1}{x_{s+j}^2}-\tfrac{1}{x_{j+1}^2}\right), \qquad j=1,\dots,s-1,\\ 
b_j:=&\dim{\kg}\left(\tfrac{1}{c_1x_1^2}+\dots+\tfrac{1}{c_jx_j^2}\right) + 
Cas_0\left(\tfrac{1}{x_{s+1}^2}-\tfrac{1}{x_{1}^2}+\dots+\tfrac{1}{x_{s+j}^2}-\tfrac{1}{x_{j}^2}\right), 
\end{align*}
where $Cas_0 := \lambda_1\dim{\kg_1}+\dots+\lambda_t\dim{\kg_t}$.   Then the closed $3$-form $H_Q$, where
$$
Q=y_1\kil_{\ggo_1}+\dots+y_s\kil_{\ggo_s}, \qquad \tfrac{y_1}{c_1}+\dots+\tfrac{y_s}{c_s}=0, 
$$ 
is $g$-harmonic if and only if 
\begin{equation}\label{harm-intro}
x_{s+k}\left(a_kA_k+b_k+2Cas_0\left(\tfrac{1}{x_{s+j}^2}-\tfrac{1}{x_{s+k}^2}\right)\right)C_j  
= x_{s+j}(a_jA_j+b_j)C_k, 
\end{equation}
for all $1\leq j< k\leq s-1$, where 
$$
A_j:=-\tfrac{c_{j+1}}{z_{j+1}}\left(\tfrac{z_1}{c_1}+\dots+\tfrac{z_j}{c_j}\right), \qquad C_j:=\tfrac{y_1}{c_1}+\dots+\tfrac{y_j}{c_j}+A_j\tfrac{y_{j+1}}{c_{j+1}}.  
$$
\end{theorem}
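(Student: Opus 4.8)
The plan is to reduce the statement to an algebraic identity in $\pg$ and then to a laborious but organized computation. Since $M$ is compact and $H_Q$ is closed (see \S\ref{H3-sec}), $H_Q$ is $g$-harmonic if and only if it is $g$-coclosed, $\delta H_Q=0$, where $\delta$ is the codifferential of $g$; and since $H_Q$ and $g$ are $G$-invariant, $\delta H_Q$ is a $G$-invariant $2$-form, hence an element of $(\Lambda^2\pg^*)^K$. So it is equivalent to prove that $\langle H_Q,d\beta\rangle_g=0$ for every $\beta\in(\Lambda^2\pg^*)^K$, where the de Rham differential acts on invariant forms as the purely algebraic operator $d\beta(X,Y,Z)=-\beta([X,Y]_\pg,Z)+\beta([X,Z]_\pg,Y)-\beta([Y,Z]_\pg,X)$ and $\langle\cdot,\cdot\rangle_g$ is the inner product on $\Lambda^3\pg^*$ induced by $g$. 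This trades the Levi-Civita connection of the non-naturally-reductive metric $g$ for bracket combinatorics together with the block-diagonal inner product, which is where the weights $x_1,\dots,x_{2s-1}$ enter.

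Next I would rewrite $H_Q$ in the decomposition $\pg=\pg_1\oplus\dots\oplus\pg_{2s-1}$ of Proposition \ref{red-al}. From $\pi^*H_Q=\overline{Q}+d\alpha_Q$ one obtains, for $X,Y,Z\in\pg$,
$$
H_Q(X,Y,Z)=Q([X,Y],Z)+Q([X,Y]_\kg,Z)-Q([X,Z]_\kg,Y)+Q([Y,Z]_\kg,X).
$$
Using that each $\pg_i$ (with $i\le s$) is $\kil_{\ggo_i}$-orthogonal to $\pi_i(\kg)$ in $\ggo_i$ and $\ad(\pi_i(\kg))$-stable, that $\kg$ sits ``diagonally'' inside $\bigoplus_i\pi_i(\kg)$ with $\pg'':=\pg_{s+1}\oplus\dots\oplus\pg_{2s-1}$ as its $g_b$-orthogonal complement there, and the aligned relations, one checks that $H_Q$ has only three kinds of nonzero components: the canonical $3$-forms $y_i\,\kil_{\ggo_i}([\cdot,\cdot],\cdot)|_{\pg_i}$ on the individual $\pg_i$'s; components in $\pg_i\wedge\pg_i\wedge\pg''$; and components in $\Lambda^3\pg''$. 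The hypotheses recalled just before the statement guarantee that $\pg':=\pg_1\oplus\dots\oplus\pg_s$ and $\pg''$ are $\omega$-orthogonal for every $\omega\in(\Lambda^2\pg^*)^K$, so a test form splits as $\beta=\beta'+\beta''$ with $\beta'$ supported on $\pg'$ and $\beta''$ on $\pg''$, and $\delta H_Q=0$ decouples into two independent conditions.

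The condition produced by the forms $\beta'$ should turn out to be automatically satisfied: pairing $H_Q$ with $d\beta'$ reduces, on each simple factor $\ggo_i$, to the harmonicity of the canonical $3$-form of the naturally reductive space $G_i/\pi_i(K)$, plus terms that cancel once the constraint $\tfrac{y_1}{c_1}+\dots+\tfrac{y_s}{c_s}=0$ defining $Q$ is used. The substantive condition comes from the forms $\beta''$, i.e.\ from the $\Ad(K)$-invariant skew pairings among the mutually isomorphic copies $\pg_{s+j}\simeq\kg$. In that computation $\dim\kg$ enters from the dimensions of those copies, while $Cas_0=\lambda_1\dim\kg_1+\dots+\lambda_t\dim\kg_t$ enters as the trace over a copy of $\kg$ of the operator $\sum_b\ad(Z_b)\ad(Z_b)$, with $Z_b$ running over a $\kil_\ggo$-orthonormal basis of $\kg$. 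Carrying out the pairing, grouping the outcome by the index pairs $1\le j<k\le s-1$, and reading off the coefficients — $A_j$ encoding how the $j$-th copy of $\kg$ is embedded in $\pg$, $C_j$ the corresponding weight of $H_Q$, and $a_j,b_j$ the contributions of $\delta$ and of the metric — produces exactly \eqref{harm-intro}.

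The main obstacle is the bookkeeping forced by the failure of multiplicity-freeness: the $s-1$ pairwise isomorphic copies of $\kg$ make $(\Lambda^2\pg^*)^K$ and $(\Lambda^3\pg^*)^K$ large, so one has to work with bases adapted to the ``diagonal versus off-diagonal'' description of $\kg\subset\bigoplus_i\pi_i(\kg)$, keep track of all the mixed brackets $[\pg_i,\pg_{s+j}]$ and $[\pg_{s+j},\pg_{s+k}]$, and systematically reduce every trace that appears to a multiple of $Cas_0$ or of $\dim\kg$. Getting the resulting long expression to collapse into the compact form \eqref{harm-intro}, rather than an unstructured sum over the simple factors, is the real content; the reduction to coclosedness and the decoupling of $\pg'$ from $\pg''$ are comparatively formal.
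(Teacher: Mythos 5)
Your strategy coincides with the paper's: reduce $g$-harmonicity of the closed form $H_Q$ to $g(H_Q,d\omega)=0$ for all $\omega\in(\Lambda^2\pg^*)^K$, use the standing hypotheses to split any test form into a part supported on $\overline{\pg}=\pg_1\oplus\dots\oplus\pg_s$ and a part supported on $\widetilde{\pg}=\pg_{s+1}\oplus\dots\oplus\pg_{2s-1}$, show that the first part imposes no condition (this is Proposition \ref{ompt1}), and extract \eqref{harm-intro} from the second (Proposition \ref{ompt4}).  So there is no divergence of method.

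The problem is that the substantive step is only announced, not performed: everything that makes the theorem say what it says --- the specific constants $a_j,b_j,A_j,C_j$ and the precise shape of \eqref{harm-intro} --- must come out of the pairing against forms on $\widetilde{\pg}$, and your text stops at ``carrying out the pairing \dots produces exactly \eqref{harm-intro}.''  To actually get there you need, at minimum: (a) the explicit $g_b$-orthogonal splitting $\widetilde{\pg}=\ggo_{\eta_1}\oplus\dots\oplus\ggo_{\eta_{s-1}}$ with $\eta_j=(1,\dots,1,A_j,0,\dots,0)$, which is where $A_j$ enters and which uses the aligned hypothesis essentially (Proposition \ref{red-al}); (b) the parametrization of the relevant invariant $2$-forms on $\widetilde{\pg}$ by skew-symmetric $(s-1)\times(s-1)$ matrices, together with the observation that the remaining invariant $2$-forms, supported on the central part of $\widetilde{\pg}$, are automatically closed and hence harmless (Lemma \ref{ompt3}); (c) the structural constants of the adapted basis and the values $Q(e^{2s}_\alpha,e^j_\beta)=-\delta_{\alpha\beta}C_j/\sqrt{B_jB_{2s}}$, which is where $C_j$ enters (Lemma \ref{ompt2}); and (d) the identity $Cas_i+Cas_0=\tfrac{1}{c_i}\dim\kg$, which converts the raw outcome of the computation (expressed through the Casimir traces $Cas_i$ of the isotropy representations of the $G_i/\pi_i(K)$) into the stated $a_j,b_j$ involving only $\dim\kg$ and $Cas_0$.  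None of these is routine bookkeeping that can be waved through, and without them the formula \eqref{harm-intro} is asserted rather than proved.  A smaller inaccuracy: the vanishing of $g(H_Q,d\beta')$ for $\beta'$ supported on $\overline{\pg}$ does not use the constraint $\tfrac{y_1}{c_1}+\dots+\tfrac{y_s}{c_s}=0$ at all; it follows from a symmetric-versus-skew cancellation in the structure constants, valid for every diagonal metric, so your appeal to that constraint at that point is misplaced.
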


Note that condition \eqref{harm-intro} trivially holds if $s=2$ or $\kg=0$ and that for the normal metric $g=g_b$, the constants $a_j$ and $b_j$ represent just algebraic invariants of $G/K$.  We do not know whether all the $G$-invariant metrics are covered or not by the above theorem in the case when all the spaces $G_i/\pi_i(K)$ are isotropy irreducible and $K$ is either simple or one-dimensional (see Remark \ref{Hqgb-rem4}; note that otherwise it is clear that not every $G$-invariant metric is covered).  

\begin{corollary}
Let $M=G/K$ be an aligned homogeneous space and let $g$ be a $G$-invariant metric as in the above theorem.  
\begin{enumerate}[{\rm (i)}] 
\item If $\kg$ is abelian, then either every closed $3$-form $H_Q$ is $g$-harmonic (this family of metrics depends on $s$ parameters), or there is a unique $g$-harmonic $3$-form $H_Q$ (up to scaling). 

\item If $\kg$ is not abelian, then the family of metrics such that every closed $3$-form $H_Q$ is $g$-harmonic depends on $3$ parameters $x_1,x_s,x_{s+1}$ and can be described as follows: 
$$
g=(x_1,x_2,\dots,x_{s-1},x_s,x_{s+1},\dots,x_{s+1},x_{2s-1})_{g_b},
$$
where $x_2,\dots,x_{s-1}$ are determined by $x_1,x_{s+1}$ and $x_{2s-1}$ by $x_1,x_s,x_{s-1}$. 

\item The standard metric $g_{\kil}$ is the unique normal metric on $G/K$ (up to scaling) satisfying that every $H_Q$ is harmonic.  

\item For any normal metric $g_b\ne\gk$, there exists a unique $g_b$-harmonic $3$-form $H_Q$ (up to scaling).

\item For any nonzero closed $3$-form $H_Q$, there exists a one-parameter family (up to scaling) of normal metrics $g_b(t)\ne\gk$ such that $H_Q$ is $g_b(t)$-harmonic for all $t$. 
\end{enumerate}
\end{corollary}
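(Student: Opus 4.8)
The plan is to read off all five assertions from the harmonicity criterion \eqref{harm-intro}, exploiting that each equation there is bilinear in the quantities $C_1,\dots,C_{s-1}$. The starting point is a linear-algebra remark: for the fixed metric $g_b$, writing $u_i:=y_i/c_i$ and $S_j:=u_1+\cdots+u_j$, the constraint $\sum_i y_i/c_i=0$ becomes $S_s=0$ and $C_j=(1-A_j)S_j+A_jS_{j+1}$ for $j=1,\dots,s-1$ (with $S_0=0$); since every $A_j<0$, this is a triangular system with nonzero diagonal entries $1-A_j$, so $Q\mapsto(C_1,\dots,C_{s-1})$ is a linear isomorphism from the $(s-1)$-dimensional space $\{\,Q=\sum_i y_i\kil_{\ggo_i}:\sum_i y_i/c_i=0\,\}$ onto $\RR^{s-1}$. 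Two consequences follow. First, \emph{every} $H_Q$ is $g$-harmonic if and only if, for all $1\le j<k\le s-1$, both coefficients in \eqref{harm-intro} vanish, i.e. $a_jA_j+b_j=0$ and $a_kA_k+b_k+2Cas_0(x_{s+j}^{-2}-x_{s+k}^{-2})=0$. Second, in each situation below where the relevant $Cas_0$-terms are absent (namely $\kg$ abelian, or $g_b$ normal) and the numbers $e_j:=a_jA_j+b_j$ do not all vanish, \eqref{harm-intro} reads $e_kC_j=e_jC_k$, which forces $(C_1,\dots,C_{s-1})$ to be a multiple of $(e_1,\dots,e_{s-1})\ne0$ and hence, via the isomorphism, singles out a unique $H_Q$ up to scaling.

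For (i) we have $Cas_0=0$, so the vanishing conditions collapse to $a_jA_j+b_j=0$ for $j=1,\dots,s-1$; putting $v_i:=1/(c_ix_i^2)$ one computes $a_jA_j+b_j=\dim\kg\,(v_1+\cdots+v_j+A_jv_{j+1})$, which has the same triangular shape as the $C_j$'s but without a linear constraint, so its positive solutions form a single ray, obtained recursively by $v_{j+1}=-A_j^{-1}(v_1+\cdots+v_j)$ with $-A_j^{-1}>0$. Thus $x_1,\dots,x_s$ are fixed up to one scaling parameter while $x_{s+1},\dots,x_{2s-1}$ stay free: an $s$-parameter family. If these conditions fail, the previous paragraph gives a unique $H_Q$ up to scaling. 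For (ii) we have $Cas_0>0$; for fixed $k$, subtracting the condition for two different values $j,j'<k$ gives $x_{s+j}=x_{s+j'}$, whence $x_{s+1}=\cdots=x_{2s-2}$ (for $s\ge3$), and then the surviving conditions are $a_jA_j+b_j=0$ for $j=1,\dots,s-2$ together with $a_{s-1}A_{s-1}+b_{s-1}+2Cas_0(x_{s+1}^{-2}-x_{2s-1}^{-2})=0$. Since the $j$-th equation of the first group is the only one in which $x_{j+1}$ occurs (through $a_j$), solving the group in order yields $x_2,\dots,x_{s-1}$ as functions of $(x_1,x_{s+1})$, and the last equation then gives $x_{2s-1}$ as a function of $(x_1,x_s,x_{s+1})$: a $3$-parameter family of the displayed shape.

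For (iii)--(v) we specialise to normal metrics, $x_1=\cdots=x_{2s-1}=1$, so the $Cas_0$-terms drop out of \eqref{harm-intro} and $e_j:=a_jA_j+b_j=\dim\kg\,z_{j+1}^{-1}\sum_{i=1}^{j}c_i^{-1}(z_{j+1}-z_i)$. Since $\dim\kg>0$, $e_j=0$ says that $z_{j+1}$ is the $(c_i^{-1})$-weighted mean of $z_1,\dots,z_j$, so $e_1=\cdots=e_{s-1}=0\iff z_1=\cdots=z_s\iff g_b\in\RR_{>0}\gk$, which is (iii); and for any $g_b\ne\gk$ not all $e_j$ vanish, so the first paragraph yields a unique $g_b$-harmonic $H_Q$ up to scaling, which is (iv). For (v), fix $Q\ne0$, i.e. $(y_i)\ne0$ with $\sum_i y_i/c_i=0$, and normalise $z_1=1$; the system $C_j(y,z)=\mu\,e_j(z)$, $j=1,\dots,s-1$, can be solved recursively by
$$
z_{j+1}=\frac{(y_{j+1}-\mu\dim\kg)\sum_{i\le j}z_i/c_i}{\sum_{i\le j}y_i/c_i-\mu\dim\kg\sum_{i\le j}1/c_i},
$$
and for $|\mu|$ large every denominator is nonzero and every $z_i$ is positive (indeed $z_i\to1$ as $\mu\to\infty$). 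The resulting curve $\mu\mapsto g_b(z(\mu))$ is nonconstant --- a constant curve would force all $y_i$ to coincide and hence $Q=0$ --- and never equals $\gk$ --- otherwise $e_j(z(\mu))=0$ for all $j$, so $C_j(y,z(\mu))=0$ and $Q=0$ --- so restricting $\mu$ to a subinterval on which $\mu\mapsto z(\mu)$ is injective gives the asserted one-parameter family of normal metrics.

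The step I expect to be the main obstacle is the exact parameter counting in (i) and (ii): one must check that the successive steps of solving for the next variable always admit a \emph{positive} solution and that the linear/rational systems involved drop dimension by precisely the expected amount, i.e. that no coincidence among the structural constants (for instance $\dim\kg/c_{j+1}=Cas_0$, which would remove $x_{j+1}$ from the $j$-th equation in (ii)) spoils the count; establishing this requires the explicit triangular/recursive form of \eqref{harm-intro} together with the normalisation $\sum_i 1/c_i=1$ forced by alignment. The degenerate cases $s\le2$ and $\kg=0$, where \eqref{harm-intro} is vacuous, are handled directly.
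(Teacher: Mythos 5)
Your proposal is correct and follows essentially the same route as the paper: the linear isomorphism $Q\mapsto(C_1,\dots,C_{s-1})$ (the paper's triangular matrix $A$ with $\det A=(1-A_1)\cdots(1-A_{s-1})$), the observation that ``every $H_Q$ harmonic'' forces both coefficients in \eqref{harm-intro} to vanish, the recursions determining $x_2,\dots,x_s$ (resp.\ $x_2,\dots,x_{s-1},x_{2s-1}$ and $x_{s+1}=\dots=x_{2s-2}$) in parts (i) and (ii), and the formula $e_j=\dim\kg\,z_{j+1}^{-1}\sum_{i\le j}c_i^{-1}(z_{j+1}-z_i)$ for (iii)--(iv) all match Corollaries \ref{HQgb-cor}, \ref{HQgb-cor2-kabel} and \ref{HQgb-cor2}. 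For part (v) your explicit recursion in $z_{j+1}$ with $|\mu|$ large is just the reciprocal parametrization ($t=1/\mu$) of the paper's perturbation of the $A_j$'s around their $\gk$-values, so it is the same argument in slightly more explicit form.
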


See Remarks \ref{Hqgb-rem5}-\ref{Hqgb-rem4} and Corollaries \ref{HQgb-cor}-\ref{HQgb-cor2} for more detailed statements of the results.  

\subsection{Geometric applications}  
We have applied these results in \cite{BRF} to the existence problem of $G$-invariant generalized metrics on $M=G/K$ which are {\it Bismut Ricci flat}, i.e., precisely the fixed points of the generalized Ricci flow (see the recent book \cite{GrcStr} and \cite{PdsRff1,PdsRff2} for further information).  The structure consists of a pair $(g,H)$, where $g$ is a Riemannian metric and $H$ is a closed $3$-form, and the corresponding {\it Bismut connection} $\nabla^B$ is defined by 
$$
g(\nabla^B_XY,Z)=g(\nabla^g_XY,Z)+\unm H(X,Y,Z), \qquad\forall X,Y,Z\in\chi(M), 
$$
where $\nabla^g$ is the Levi Civita connection of $(M,g)$.  It turns out that $\nabla^B$ is Ricci flat if and only if, 
$$
\mbox{$H$ is $g$-harmonic} \quad\mbox{and}\quad \ricci(g)=\unc H_g^2, 
$$  
where $\ricci(g)$ is the Ricci tensor of $g$ and $H_g^2(X,Y):=g(\iota_XH,\iota_YH)$ for all $X,Y\in\chi(M)$.  

The results of the present paper can also be applied to a number of problems in differential geometry in the context of compact homogeneous spaces, including the study of Einstein manifolds with skew torsion (see \cite{AgrFrr, AgrFrrFrd}) and of Killing $2$-forms and $3$-forms (see \cite{Smm, BlgMrnSmm}).

\vs \noindent {\it Acknowledgements.}  We are very grateful to Manuel Amann, Christopher B\"ohm and Wolfgang Ziller for many extremely helpful conversations.  We would also like to thank the anonymous referee for his/her enormous work and really good eye.

\section{Preliminaries}\label{preli}

Let $M^n$ be a compact connected differentiable manifold.  We will assume throughout the paper that $M$ is homogeneous and fix an almost-effective transitive action of a compact connected Lie group $G$ on $M$.  The $G$-action determines a presentation $M=G/K$ of $M$ as a homogeneous space, where $K\subset G$ is the isotropy subgroup at some point $o\in M$.  We also assume that the compact Lie group $K$ is connected.  

Since $G$ is compact, the real de Rham cohomology of $M=G/K$ can be computed within $G$-invariant forms.  Given any reductive decomposition $\ggo=\kg\oplus\pg$ (i.e., $\pg$ is $\Ad(K)$-invariant), the space of all $G$-invariant $k$-forms is identified with $(\Lambda^k\pg^*)^K$, the space of $\Ad(K)$-invariant $k$-forms on $\pg$, i.e., 
$$
\alpha([Z,\cdot],\cdot,\dots,\cdot)+\dots+\alpha(\cdot,\dots,\cdot,[Z,\cdot])=0, \qquad\forall Z\in\kg, 
$$
and   
the differential $d:=d_M$ of forms on the manifold $M$ is given by $d:\Lambda^k\pg^*\rightarrow\Lambda^{k+1}\pg^*$, 
$$
d\alpha(X_1,\dots,X_{k+1}) := \sum_{i<j}(-1)^{i+j}\alpha([X_i,X_j]_\pg,X_1,\dots,\hat{X_i},\dots,\hat{X_j}\dots,X_{k+1}),
$$
giving rise to $H^k(G/K)=\Ker d/\Ima d$.  The $k$th {\it Betti number} is given by $b_k(G/K):=\dim{H^k(G/K)}$.    

Alternatively, the isomorphism 
\begin{equation}\label{deltagk}
(\Lambda^k\pg^*)^K\longrightarrow \Lambda^k(\ggo,K):=\left\{ \beta\in(\Lambda^k\ggo^*)^K:\iota_\kg\beta=0\right\}, \qquad \alpha\mapsto \hat{\alpha}:=\pi^*\alpha,
\end{equation}
where $\pi:G\rightarrow G/K$ is the usual projection map and $\iota_Z\beta:=\beta(Z,\cdot,\dots,\cdot)$, can be used to compute $H^k(G/K)$ upstairs within left-invariant forms on the Lie group $G$.  Indeed, the corresponding differential of forms on the Lie group $G$, which will be denoted by $\hat{d}$, satisfies that $\hat{d}\Lambda^k(\ggo,K)\subset\Lambda^{k+1}(\ggo,K)$ and $d\alpha=0$ if and only if $\hat{d}\hat{\alpha}=0$ (note that $\hat{d}\pi^*=\pi^*d$), so $H^k(G/K)=\Ker \hat{d}/\Ima \hat{d}$.  

Any $G$-invariant metric $g$ on $M=G/K$, which will always be identified with an $\Ad(K)$-invariant inner product on $\pg$, determines an inner product on each $\Lambda^k\pg^*$ given by
$$
g(\alpha,\beta) :=
%& \sum_{i_1<\dots<i_k}\alpha(X_{i_1},\dots,X_{i_k})\beta(X_{i_1},\dots,X_{i_k}) \\ =
\sum_{i_1,\dots,i_k}\alpha(X_{i_1},\dots,X_{i_k})\beta(X_{i_1},\dots,X_{i_k}), 
$$
where $\{ X_i\}$ is any $g$-orthonormal basis of $\pg$.  Note that $\{ \frac{1}{\sqrt{k!}}X^{i_1}\wedge\dots\wedge X^{i_k}\}$ is therefore a $g$-orthonormal basis of $\Lambda^k\pg^*$, where $\{ X^i\}$ is the basis of $\pg^*$ $g$-dual to $\{ X_i\}$.  If 
$$
d_g^*:(\Lambda^{k+1}\pg^*)^K\longrightarrow(\Lambda^k\pg^*)^K
$$ 
is the adjoint of $d$ with respect to $g$ (i.e., $g(d_g^*\cdot,\cdot)=g(\cdot,d\cdot)$), then a $k$-form $\alpha$ is closed and {\it coclosed} (i.e., $d_g^*g=0$) if and only if $\alpha$ is in the kernel of the Hodge Laplacian 
$$
\Delta_g:=dd_g^*+d_g^*d:(\Lambda^k\pg^*)^K\longrightarrow(\Lambda^k\pg^*)^K,
$$ 
and it is called $g$-{\it harmonic} in that case.  Since 
$$
(\Lambda^k\pg^*)^K = \rlap{$\underbrace{\phantom{\Ima d \oplus \Ker\Delta_g}}_{\Ker d}$} \Ima d \oplus\overbrace{\Ker\Delta_g \oplus \Ima d_g^*}^{\Ker d_g^*},
$$ 
we have that $H^k(G/K) \simeq \Ker \Delta_g$, that is, each class has a unique $g$-harmonic representative.  

We consider an $\Ad(K)$-invariant left-invariant metric $\hat{g}$ on $G$ such that $\hat{g}(\kg,\pg)=0$ and $\hat{g}|_{\pg\times\pg}=g$.  It is easy to check that 
$$
\hat{g}(\hat{\alpha},\hat{\beta})=g(\alpha,\beta), \qquad\forall \alpha,\beta\in(\Lambda^k\pg^*)^K,  
$$
so for any $\alpha\in(\Lambda^k\pg^*)^K$ and $\beta\in(\Lambda^{k-1}\pg^*)^K$,  
\begin{align*}
g(d_g^*\alpha,\beta) = g(\alpha,d\beta) = \hat{g}(\hat{\alpha},\pi^*d\beta) = \hat{g}(\hat{\alpha},\hat{d}\hat{\beta}) = \hat{g}(\hat{d}_{\hat{g}}^*\hat{\alpha},\hat{\beta}).   
\end{align*}
Note that $\hat{d}_{\hat{g}}^*\hat{\alpha}\in(\Lambda^{k-1}\ggo^*)^K$, but not necessarily in $\Lambda^{k-1}(\ggo,K)$.   Thus a $k$-form $\alpha\in(\Lambda^k\pg^*)^K$ is $g$-coclosed if and only if $\hat{d}_{\hat{g}}^*\hat{\alpha}$ is $\hat{g}$-orthogonal to $\Lambda^{k-1}(\ggo,K)$.   

A $G$-invariant metric is called {\it normal} when it is determined by $g_b|_{\pg\times\pg}$ for some bi-invariant metric $g_b$ on $\ggo$ and if $g_b=-\kil_{\ggo}$, where $\kil_{\ggo}$ denotes the Killing form of $\ggo$, then it is called {\it standard} and denoted by $g_{\kil}$.  We fix a normal metric $g_b$ on $M=G/K$ and consider the $g_b$-orthogonal reductive decomposition $\ggo=\kg\oplus\pg$ and the $g_b$-orthogonal decomposition
$$
\pg=\pg_0\oplus\pg_1, \qquad\mbox{where}\quad  \pg_0:=\{ X\in\pg:[\kg,X]=0\},
$$
that is, $\pg_0$ is the trivial $K$-representation isotypic component of the isotropy representation of $G/K$.

\subsection{Harmonic $1$-forms}\label{1f}
Any $1$-form on $\pg$ is of the form $\theta_X:=g_b(\cdot,X)$ for some $X\in\pg$ and since 
$$
\theta_X([Z,\cdot]) = g_b([Z,\cdot],X) = -g_b(\cdot, [Z,X]), \qquad\forall Z\in\kg,
$$
$\theta_X$ is $\Ad(K)$-invariant if and only if $X\in\pg_0$, so $(\pg^*)^K=\pg_0^*$.  It follows from   
$d\theta_X = -g_b([\cdot,\cdot],X)$ that $\theta_X$ ($X\in\pg_0$) is closed if and only if $X$ is $g_b$-orthogonal to $[\pg,\pg]$.  This is equivalent to $X\in\zg(\ggo)$, where $\zg(\ggo)$ is the center of $\ggo$ (indeed, $[\ggo,\ggo]=[\kg,\kg]+[\kg,\pg]+[\pg,\pg]$ and $X$ is already $g_b$-orthogonal to both $\kg$ and $[\kg,\pg]=\pg_1$).  We therefore obtain that 
$$
H^1(G/K) = \{ [\theta_X]: X\in\zg(\ggo)\cap\pg_0\}, \qquad b_1(G/K)=\dim{\zg(\kg)\cap\pg_0}.
$$
In particular, $b_1(G/K)=0$ if $G$ is semisimple.  Note that $[\theta_X]=\{ \theta_X\}$ and so $\theta_X$ is $g$-harmonic for all $X\in\zg(\ggo)\cap\pg_0$ and any $G$-invariant metric $g$.

\subsection{Harmonic $2$-forms}\label{2f}
Assume that $G$ is semisimple.  Any closed $2$-form on $G$ is exact and so it is necessarily of the form 
$$
\hat{\omega}_X:= g_b([\cdot,\cdot],X), \qquad\mbox{for some}\quad X\in\ggo.  
$$
Using that for all $Z\in\kg$ and $Y,W\in\ggo$,
$$
\hat{\omega}_X([Z,Y],W)+\hat{\omega}_X(Y,[Z,W]) = g_b([Z,[Y,W]],X) = -g_b([Y,W],[Z,X]),
$$
and 
$$
(\iota_Z\hat{\omega}_X)(Y) = \hat{\omega}_X(Z,Y) = g_b([Z,Y],X) = -g_b(Y,[Z,X]), 
$$
we obtain that $\hat{\omega}_X$ is $\Ad(K)$-invariant if and only if $\iota_\kg\hat{\omega}_X=0$, if and only if $[\kg,X]=0$, so
$$
\{\hat{\omega}\in\Lambda^2(\ggo,K):\hat{d}\hat{\omega}=0\} = \{ \hat{\omega}_X:X\in\zg(\kg)\oplus\pg_0\}.  
$$
This implies that
$$
\{\omega\in(\Lambda^2\pg^*)^K:d\omega=0\} = \{ \omega_X:X\in\zg(\kg)\oplus\pg_0\}, \qquad \mbox{where}\quad \omega_X:=g_b([\cdot,\cdot]_\pg,X),
$$
and since $\omega_X=-d\theta_X$ for any $X\in\pg_0$, we obtain that 
$$
H^2(G/K) = \{ [\omega_Z]: Z\in\zg(\kg)\}\simeq\zg(\kg), \qquad b_2(G/K)=\dim{\zg(\kg)}.
$$
Note that $[\omega_Z]=\{\omega_{Z+X}:X\in\pg_0\}$ for any $Z\in\zg(\kg)$, so if $\pg_0=0$ then $\omega_Z$ is $g$-harmonic for every $G$-invariant metric $g$.  

If $\{ e_i\}$ is a $g_b$-orthonormal basis of $\pg$, then for any $Z\in\zg(\kg)\oplus\pg_0$ and $X\in\pg_0$, 
\begin{align*}
g_b(\omega_Z,d\theta_X) =& \sum_{i,j} \omega_Z(e_i,e_j)d\theta_X(e_i,e_j) = -\sum_{i,j} g_b([e_i,e_j],Z)g_b([e_i,e_j],X) \\ 
=& -\tr{\ad{Z}|_\pg\ad{X}|_\pg} = -\kil_\ggo(Z,X).   
\end{align*}
Thus the closed $2$-form $\omega_Z$ is $g_b$-harmonic if and only if $\kil_\ggo(Z,\pg_0)=0$.  The $g_b$-harmonic representative inside a class $[\omega_Z]$, $Z\in\zg(\kg)$ is therefore given by $\omega_{Z+X_Z}$, where $X_Z$ is the unique vector in $\pg_0$ such that $\kil_\ggo(X,X_Z)=-\kil_\ggo(X,Z)$ for all $X\in\pg_0$.  Note that $\pg_0$ and so $X_Z$ depend on $g_b$.  

In particular, $\omega_Z$ is $\gk$-harmonic for any $Z\in\zg(\kg)$.

\section{Harmonic $3$-forms on compact Lie groups}\label{LG-sec}

Let $M^n=G$ be a compact connected Lie group.  It is well known that every class in $H^3(G)$ has a unique bi-invariant representative (see \cite[Chapter V, Corollary 12.7]{Brd}), which in the semisimple case, is necessarily of the form 
$$
\overline{Q}(X,Y,Z):=Q([X,Y],Z), \qquad \forall X,Y,Z\in\ggo, 
$$ 
for some bi-invariant symmetric bilinear form $Q$ on $\ggo$ (see \cite[Chapter V, Theorem 12.10]{Brd}).  These closed $3$-forms are often called {\it Cartan $3$-forms}.  Thus the third Betti number $b_3(G):=\dim{H^3(G)}$ is precisely the number of simple factors if $G$ is semisimple, and it is well known that $H^1(G)=0$ and $H^2(G)=0$ in that case (see \S\ref{1f} and \S\ref{2f}).  Note that if $G$ is simple, then $H^3(G)=\RR[\Hk]$, where $\Hk:=\overline{\kil_\ggo}$ and $\kil_\ggo$ is the Killing form of $\ggo$.  It is also well known that Cartan $3$-forms are all harmonic with respect to any bi-invariant metric on $G$ (see e.g.\ \cite[Theorem 3.4.7]{Vss}).  We study in this section the harmonicity condition for a Cartan $3$-form with respect to any left-invariant metric on $G$.       

We fix from now on a bi-invariant metric $g_b$ on $G$.  For any left-invariant metric $g$ on $G$, there exists a $g_b$-orthonormal basis $\{ e_1,\dots,e_n\}$ of $\ggo$ such that $g(e_i,e_j)=x_i\delta_{ij}$ for some $x_1,\dots,x_n>0$, which will be denoted by $g=(x_1,\dots,x_n)_{g_b}$.  Note that $\{ e_i/\sqrt{x_i}\}$ is a $g$-orthonormal basis of $\ggo$ with dual basis $\{ \sqrt{x_i}e_i\}$, where $e_i$ also denotes the dual basis defined by $e_i(e_j):=\delta_{ij}$.  The ordered basis $\{ e_1,\dots,e_n\}$ determines structural constants given by 
$$
[e_i,e_j]=\sum_k c_{ij}^ke_k, \qquad \mbox{or equivalently}, \quad c_{ij}^k:=g_b([e_i,e_j],e_k).      
$$

\begin{lemma}\label{dgstar}
For any metric $g=(x_1,\dots,x_n)_{g_b}$ and $\beta\in\Lambda^3\ggo^*$, 
$$
d_g^*\beta = -\frac{3}{2}\sum_{k<l} \left(x_k\sum_{i,j} \frac{c_{ij}^k\beta(e_i,e_j,e_l)}{x_ix_j} -x_l\sum_{i,j} \frac{c_{ij}^l\beta(e_i,e_j,e_k)}{x_ix_j}\right)e_k\wedge e_l.
$$
\end{lemma}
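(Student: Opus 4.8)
The plan is to compute $d_g^*\beta$ straight from its defining property $g(d_g^*\beta,\gamma)=g(\beta,d\gamma)$ for all $\gamma\in\Lambda^2\ggo^*$, using that on left-invariant forms $d$ is the Chevalley--Eilenberg differential recalled in \S\ref{preli}. Since $\{e_i/\sqrt{x_i}\}$ is $g$-orthonormal with $g$-dual basis $\{\sqrt{x_i}\,e_i\}$, the family $\{\tfrac1{\sqrt2}\sqrt{x_kx_l}\,e_k\wedge e_l:k<l\}$ is a $g$-orthonormal basis of $\Lambda^2\ggo^*$ for the inner product fixed in \S\ref{preli}; expanding $d_g^*\beta$ in this basis gives $d_g^*\beta=\sum_{k<l}\tfrac{x_kx_l}{2}\,g(d_g^*\beta,e_k\wedge e_l)\,e_k\wedge e_l=\sum_{k<l}\tfrac{x_kx_l}{2}\,g(\beta,d(e_k\wedge e_l))\,e_k\wedge e_l$. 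So the whole lemma reduces to a formula for the single number $g(\beta,d(e_k\wedge e_l))$.

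To compute it I would use two elementary facts. First, $de_k=-\tfrac12\sum_{i,j}c_{ij}^k\,e_i\wedge e_j$ (immediate from $de_k(e_i,e_j)=-e_k([e_i,e_j])=-c_{ij}^k$), whence by the Leibniz rule
$$d(e_k\wedge e_l)=de_k\wedge e_l-e_k\wedge de_l=-\tfrac12\sum_{i,j}c_{ij}^k\,e_i\wedge e_j\wedge e_l+\tfrac12\sum_{i,j}c_{ij}^l\,e_k\wedge e_i\wedge e_j.$$
Second, feeding the $g$-orthonormal basis $\{e_i/\sqrt{x_i}\}$ into the inner product convention of \S\ref{preli} yields, for a decomposable $3$-covector,
$$g(\beta,\,e_i\wedge e_j\wedge e_l)=\frac{6\,\beta(e_i,e_j,e_l)}{x_ix_jx_l},$$
the factor $6=3!$ arising from summing over all orderings of the three indices, with the antisymmetry of $\beta$ cancelling the signs.

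Substituting the displayed expression for $d(e_k\wedge e_l)$ into $g(\beta,\cdot)$, applying the second identity, and using the cyclic symmetry $\beta(e_k,e_i,e_j)=\beta(e_i,e_j,e_k)$, one obtains
$$g(\beta,d(e_k\wedge e_l))=-\frac{3}{x_l}\sum_{i,j}\frac{c_{ij}^k\,\beta(e_i,e_j,e_l)}{x_ix_j}+\frac{3}{x_k}\sum_{i,j}\frac{c_{ij}^l\,\beta(e_i,e_j,e_k)}{x_ix_j};$$
multiplying by $\tfrac{x_kx_l}{2}$ and summing over $k<l$ produces exactly the stated formula. An equivalent route is to evaluate $d(e_k\wedge e_l)(e_p,e_q,e_r)$ from the Chevalley--Eilenberg formula for $2$-forms, pair with $\beta$ via the full-index-sum form of the inner product, and collapse the six resulting terms using the antisymmetry of $\beta$ and of $c_{ij}^m$ in $i,j$ --- the three terms in each of the two groups turn out to coincide, which is where the $3$ in $-\tfrac32$ comes from.

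The argument is entirely routine; the only real obstacle is bookkeeping the normalization constants imposed by the anisotropic metric --- the $3!$ from the inner product on $\Lambda^3$, the $2$ from that on $\Lambda^2$, and the powers of $\sqrt{x_i}$ relating the $g_b$-dual basis $\{e_i\}$ to the $g$-orthonormal basis $\{e_i/\sqrt{x_i}\}$ --- so that they assemble into precisely the coefficient $-\tfrac32$ and the powers $x_k$, $x_l$, $x_ix_j$ appearing in the statement.
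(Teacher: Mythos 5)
Your proposal is correct and follows essentially the same route as the paper: both expand $d_g^*\beta$ in the $g$-orthonormal basis $\left\{\sqrt{x_kx_l/2}\,e_k\wedge e_l\right\}$ and reduce everything to the single number $g(\beta,d(e_k\wedge e_l))$, the only difference being that the paper evaluates $d\alpha$ on triples via the Chevalley--Eilenberg formula and collapses the three terms by antisymmetry, while you apply the Leibniz rule to $e_k\wedge e_l$ and pair the resulting decomposable $3$-covectors with $\beta$ --- the same bookkeeping in a different order. All the normalization constants ($3!$, the $1/2$, and the powers of the $x_i$) check out.
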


\begin{proof}
For any $2$-form $\alpha$,   
\begin{align*}
g(d_g^*\beta,\alpha) =& g(\beta,d\alpha) = \sum_{i,j,k} \frac{\beta(e_i,e_j,e_k)d\alpha(e_i,e_j,e_k)}{x_ix_jx_k} \\ 
=&  \sum_{i,j,k} \frac{\beta(e_i,e_j,e_k) \left(-\alpha([e_i,e_j],e_k) +\alpha([e_i,e_k],e_j) -\alpha([e_j,e_k],e_i)\right)}{x_ix_jx_k} \\ 
=&  -3\sum_{i,j,k} \frac{\beta(e_i,e_j,e_k) \alpha([e_i,e_j],e_k)}{x_ix_jx_k} 
= -3\sum_{i,j,k,l} \frac{\beta(e_i,e_j,e_k)c_{ij}^l \alpha(e_l,e_k)}{x_ix_jx_k},
\end{align*}
so for $\alpha=e_r\wedge e_s$, one obtains that
$$
g(d_g^*\beta,\alpha) = -3\sum_{i,j} \frac{\beta(e_i,e_j,e_s)c_{ij}^r}{x_ix_jx_s} - \frac{\beta(e_i,e_j,e_r)c_{ij}^s}{x_ix_jx_r}. 
$$
The fact that $\left\{\sqrt{\frac{x_kx_l}{2}}e_k\wedge e_l\right\}$ is a $g$-orthonormal basis of $\Lambda^2\ggo^*$ concludes the proof.  
\end{proof}

We fix a decomposition 
\begin{equation}\label{dec-sim}
\ggo=\ggo_0\oplus\ggo_1\oplus\dots\oplus\ggo_s, 
\end{equation}
where the $\ggo_i$'s are simple ideals of $\ggo$ and $\ggo_0$ is the center of $\ggo$.  Thus 
$$
g_b=g_0+z_1(-\kil_{\ggo_1})+\dots+z_s(-\kil_{\ggo_s}), \qquad\mbox{for some}\quad z_1,\dots,z_s>0,
$$
and some inner product $g_0$ on $\ggo_0$.  Since the set of Ricci eigenvalues of $g_b$ is $\{\unc z_1^{-1},\dots,\unc z_s^{-1}\}$, the moduli space of all bi-invariant metrics on $G$ up to isometry and scaling depends on $s-1$ parameters.  Note that $g_b$ is Einstein if and only if $\ggo_0=0$ (i.e., $G$ semisimple) and $z_1=\dots=z_s$, that is, a single point in the moduli space.        

The following corollary of Lemma \ref{dgstar} shows that in general, for a given left-invariant metric $g$, even the $g$-harmonicity of the Cartan $3$-form $\Hk$ depends on tricky conditions in terms of the structural constants.  

\begin{corollary}\label{Hkgharm-simple}
For any metric $g=(x_1,\dots,x_n)_{g_b}$ as above, the following holds.
\begin{enumerate}[{\rm (i)}] 
\item 
$$
d_g^*\Hk = -\frac{3}{2}\sum_{\substack{1\leq k<l\leq n}} (x_k-x_l)\left(\sum_{1\leq i,j\leq n} \frac{c_{ij}^kc_{ij}^l}{x_ix_j}\right)e_k\wedge e_l. 
$$
\item $\Hk$ is $g$-harmonic if and only if,
$$
\sum_{1\leq i,j\leq n} \frac{c_{ij}^kc_{ij}^l}{x_ix_j} = 0, \qquad\forall k,l \quad\mbox{such that}\quad x_k\ne x_l.
$$
In particular, $\Hk$ is $g_b$-harmonic for any bi-invariant metric $g_b$. 

\item $\Hk+d\alpha$, where $\alpha\in\Lambda^2\ggo^*$, is $g$-harmonic if and only if 
\begin{align*}
(x_k-x_l)\sum_{1\leq i,j\leq n} \frac{c_{ij}^kc_{ij}^l}{x_ix_j} =& -x_k\sum_{1\leq i,j\leq n} \frac{c_{ij}^kd\alpha(e_i,e_j,e_l)}{x_ix_j} \\
&+ x_l\sum_{1\leq i,j\leq n} \frac{c_{ij}^ld\alpha(e_i,e_j,e_k)}{x_ix_j},  
\end{align*}
for all $1\leq k<l\leq n$.
\end{enumerate}
\end{corollary}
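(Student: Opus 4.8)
The plan is to apply Lemma \ref{dgstar} directly with the specific choice $\beta = \Hk = \overline{\kil_\ggo}$, that is, $\beta(e_i,e_j,e_l) = \kil_\ggo([e_i,e_j],e_l)$. The key point is that, since $\{e_i\}$ is $g_b$-orthonormal and $g_b = g_0 + \sum_r z_r(-\kil_{\ggo_r})$ is bi-invariant, the completely antisymmetric tensor $\kil_\ggo([\cdot,\cdot],\cdot)$ can be re-expressed in terms of the structural constants $c_{ij}^k = g_b([e_i,e_j],e_k)$. Concretely, because each simple ideal $\ggo_r$ carries the fixed multiple $\kil_\ggo|_{\ggo_r} = -\tfrac{1}{z_r}\,g_b|_{\ggo_r}$ (the Killing form of $\ggo$ restricted to a simple ideal is a scalar multiple of its own Killing form, which is $-\tfrac{1}{z_r}g_b$ on that block), one gets $\kil_\ggo([e_i,e_j],e_l) = -\tfrac{1}{z_{r(l)}}c_{ij}^l$ when $e_l\in\ggo_{r(l)}$ with $r(l)\ge 1$, and $\kil_\ggo([e_i,e_j],e_l)=0$ when $e_l\in\ggo_0$ (as $[\ggo,\ggo]\perp_{\kil}\ggo_0$ and $\kil_\ggo$ annihilates the center). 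For the harmonicity statements the overall constants $-\tfrac{1}{z_r}$ turn out to be irrelevant, so I will simply observe that $\Hk(e_i,e_j,e_l)$ is, up to a block-constant nonzero factor, equal to $c_{ij}^l$, and — crucially — that $c_{ij}^l$ is itself totally antisymmetric in $(i,j,l)$ because $g_b$ is bi-invariant; this is what collapses the three terms of $d_g^*\beta$ into one.

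For part (i), I substitute into the formula of Lemma \ref{dgstar}. Writing $\Hk(e_i,e_j,e_l)$ in terms of $c_{ij}^l$ and using total antisymmetry of $c_{\cdot\cdot}^{\cdot}$, the two sums $\sum_{i,j}\tfrac{c_{ij}^k\,\Hk(e_i,e_j,e_l)}{x_ix_j}$ and $\sum_{i,j}\tfrac{c_{ij}^l\,\Hk(e_i,e_j,e_k)}{x_ix_j}$ become, up to the block factor, $\sum_{i,j}\tfrac{c_{ij}^kc_{ij}^l}{x_ix_j}$ and $\sum_{i,j}\tfrac{c_{ij}^lc_{ij}^k}{x_ix_j}$ respectively — the same quantity. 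Hence the bracket in Lemma \ref{dgstar} becomes $(x_k-x_l)\sum_{i,j}\tfrac{c_{ij}^kc_{ij}^l}{x_ix_j}$, giving the stated expression. One subtlety to check: when $e_k$ and $e_l$ lie in different simple blocks the factor $-\tfrac{1}{z_r}$ differs, but in that case $c_{ij}^kc_{ij}^l$ already vanishes for all $i,j$ (a bracket cannot have a nonzero component in two distinct ideals simultaneously), so the two normalizations never conflict and the clean formula holds verbatim; I will remark on this. Part (ii) is then immediate: $d_g^*\Hk = 0$ iff each summand vanishes, i.e.\ iff $\sum_{i,j}\tfrac{c_{ij}^kc_{ij}^l}{x_ix_j}=0$ whenever $x_k\ne x_l$; and since $\Hk$ is closed (it is bi-invariant, $\hat d\overline{Q}=0$), $g$-harmonicity is equivalent to $g$-coclosedness, i.e.\ to $d_g^*\Hk=0$. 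Taking $x_1=\dots=x_n$ recovers the classical fact that $\Hk$ is $g_b$-harmonic.

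For part (iii), I apply Lemma \ref{dgstar} to $\beta = \Hk + d\alpha$, which is still closed since $d(\Hk+d\alpha)=0$, so harmonicity again reduces to $d_g^*(\Hk+d\alpha)=0$. By linearity of $d_g^*$, the $\Hk$-part contributes exactly the expression computed in (i), namely the coefficient $-\tfrac32(x_k-x_l)\sum_{i,j}\tfrac{c_{ij}^kc_{ij}^l}{x_ix_j}$ on $e_k\wedge e_l$, while the $d\alpha$-part contributes $-\tfrac32\big(x_k\sum_{i,j}\tfrac{c_{ij}^k d\alpha(e_i,e_j,e_l)}{x_ix_j} - x_l\sum_{i,j}\tfrac{c_{ij}^l d\alpha(e_i,e_j,e_k)}{x_ix_j}\big)$ straight from Lemma \ref{dgstar} with no simplification. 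Setting the sum of the two coefficients to zero for every $k<l$ and rearranging gives precisely the asserted identity. I do not anticipate a genuine obstacle here; the only point requiring care — and the one I would flag as the main technical hinge — is the bookkeeping of the block-dependent scalars $-\tfrac1{z_r}$ relating $\kil_\ggo$ to $g_b$ on simple ideals, together with the observation that total antisymmetry of the structural constants $c_{ij}^k$ (equivalently, bi-invariance of $g_b$) is exactly what makes the three terms in $d_g^*$ coalesce. Once that is in hand, everything is a direct substitution into Lemma \ref{dgstar}.
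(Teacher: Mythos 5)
Your proposal is correct and is precisely the paper's (implicit) argument: the corollary is stated as a direct consequence of Lemma \ref{dgstar}, obtained by substituting $\beta=\Hk$, using that $\Hk(e_i,e_j,e_l)$ is a block-constant multiple of $c_{ij}^l$ and that the total antisymmetry of the $c_{ij}^k$ (bi-invariance of $g_b$) collapses the two sums into the single factor $(x_k-x_l)$; parts (ii) and (iii) then follow by linearity and closedness of $\Hk+d\alpha$. One caveat on the step you flag yourself: the block factor $-\tfrac{1}{z_r}$ does not actually disappear within a block, so strictly speaking part (i) and the left-hand side of part (iii) should carry it (exactly as the factor $\tfrac{y_i}{z_i}$ is retained in the analogous Corollary \ref{Hkgharm}); your observation that cross-block products $c_{ij}^kc_{ij}^l$ vanish shows only that the differing normalizations never collide, not that the factor is $1$, so "holds verbatim" is too quick --- though since the factor is a nonzero constant on each block, the harmonicity criterion in part (ii) is unaffected.
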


\begin{example}
It is easy to see that the usual basis $\{ e_{rs}:= E_{rs}-E_{sr}\}$ of $\sog(n)$ satisfies that $c_{ij}^kc_{ij}^l=0$ for all $k\ne l$.  Thus $\Hk$ is $g$-harmonic for any metric $g$ on $\SO(n)$ which is diagonal with respect to $\{ e_{rs}\}$ (i.e., $\{ e_{rs}\}$ is $g$-orthogonal) by Corollary \ref{Hkgharm}.  A basis of a Lie algebra is called {\it nice} when it satisfies the above condition together with $c_{ij}^kc_{rs}^k=0$ as soon as $\{ i,j\}\cap\{ r,s\}\ne\emptyset$ (which follows from the former one if the basis is orthogonal with respect to a bi-invariant metric).  On a nilpotent Lie group, a basis satisfies that any metric which is diagonal has also a diagonal Ricci tensor if and only if it is nice (see \cite{nicebasis}), and the same holds for bases which are orthogonal with respect to a bi-invariant metric on compact Lie groups (see \cite{Krs}).  However, the above is the only nice basis known so far on a compact simple Lie group, their existence on the other compact simple Lie groups is still open.      
\end{example}

\begin{example}
For the standard $\gk$-orthogonal basis $\{ e_1,\dots,e_8\}$ of $\sug(3)$, the only nonzero products of the form $c_{ij}^kc_{ij}^l$ are $c_{36}^1c_{36}^2$ and $c_{47}^1c_{47}^2$, where $c_{36}^1=\frac{\sqrt{3}}{6}=c_{47}^1$ and $c_{36}^2=-\unm=-c_{47}^2$.  Thus $\Hk$ is $g$-harmonic for a diagonal metric $g=(x_1,\dots,x_8)_{\gk}$ on $\SU(3)$ if and only if $x_1=x_2$ or $x_3x_6=x_4x_7$ (see Corollary \ref{Hkgharm-simple}).  Moreover, using that the only nonzero brackets involving $e_1$ or $e_2$ are $[e_1,e_2]=[e_2,e_5]=[e_2,e_8]=0$ and
$$
[e_3,e_6]=\tfrac{\sqrt{3}}{6}e_1-\tfrac{1}{2} e_2, \quad [e_4,e_7]=\tfrac{\sqrt{3}}{6}e_1+\tfrac{1}{2} e_2, \quad [e_5,e_8]=\tfrac{\sqrt{3}}{3}e_1,
$$
it is straightforward to see that the $g$-harmonic $3$-form is given by $\Hk+td(e_1\wedge e_2)$, where
$$
t:=\frac{-\sqrt{3}(x_1-x_2)\left(\frac{1}{x_3x_6}-\frac{1}{x_4x_7}\right)}{(x_1+3 x_2)\left(\frac{1}{x_3x_6}+\frac{1}{x_4x_7}\right)+12\frac{x_1}{x_5x_8}}.
$$
\end{example}

Any bi-invariant symmetric bilinear form is of the form 
$$
Q=Q_0+y_1\kil_{\ggo_1}+\dots+y_s\kil_{\ggo_s}, \qquad\mbox{for some}\quad y_1,\dots,y_s\in\RR,
$$
where $Q_0$ is any symmetric bilinear form on $\ggo_0$.  

We assume that the $g_b$-orthonormal basis $\{ e_i\}$ considered above is adapted to decomposition \eqref{dec-sim}, in the sense that it is the union of bases $\{ e^i_\alpha:\alpha=1,\dots,n_i\}$ of each $\ggo_i$, $i=0,1,\dots,s$, where $n_i:=\dim{\ggo_i}$, and we denote by $c_{i\alpha,i\beta}^{i\gamma}:=g_b([e^i_\alpha,e^i_\beta],e^i_\gamma)$ the corresponding structural constants.  Any metric $g$ such that decomposition \eqref{dec-sim} is $g$-orthogonal can be written on each $\ggo_i$ as $(x_1^i,\dots,x_{n_i}^i)_{g_b}$.  Note that $g$ is bi-invariant if and only if $x_1^i=\dots=x_{n_i}^i$ for all $i=1\dots,s$.  

The following corollary of Lemma \ref{dgstar} implies the well-known fact that any bi-invariant $3$-form on a compact Lie group is harmonic with respect to any bi-invariant left-invariant metric.  

\begin{corollary}\label{Hkgharm}
For any $g_b$, $Q$ and $g$ as above (assume that decomposition \eqref{dec-sim} is $g$-orthogonal), the following holds.
\begin{enumerate}[{\rm (i)}] 
\item 
$$
d_g^*\overline{Q} = -\frac{3}{2}\sum_{\substack{1\leq \gamma<\delta\leq n_i\\ 1\leq i\leq s}} \frac{y_i}{z_i}(x_\gamma^i-x_\delta^i)\left(\sum_{1\leq \alpha,\beta\leq n_i} \frac{c_{i\alpha,i\beta}^{i\gamma}c_{i\alpha,i\beta}^{i\delta}}{x_\alpha^ix_\beta^i}\right)e^i_\gamma\wedge e^i_\delta. 
$$
\item $\overline{Q}$ is $g$-harmonic if and only if for all $i=1,\dots,s$,
$$
\sum_{1\leq \alpha,\beta\leq n_i} \frac{c_{i\alpha,i\beta}^{i\gamma}c_{i\alpha,i\beta}^{i\delta}}{x_\alpha^ix_\beta^i} = 0, \qquad\forall \gamma,\delta \quad\mbox{such that}\quad x_\gamma^i\ne x_\delta^i.
$$
In particular, $\overline{Q}$ is $g$-harmonic for any bi-invariant metric $g_b$. 

\item $\overline{Q}+d\omega$, where $\omega\in\Lambda^2\ggo^*$, is $g$-harmonic if and only if 
\begin{align*}
(x_\gamma^i-x_\delta^i)\sum_{1\leq \alpha,\beta\leq n_i} \frac{c_{i\alpha,i\beta}^{i\gamma}c_{i\alpha,i\beta}^{i\delta}}{x_\alpha^ix_\beta^i} =& -x_\gamma^i\sum_{1\leq \alpha,\beta\leq n_i} \frac{c_{i\alpha,i\beta}^{i\gamma}d\omega(e_\alpha^i,e_\beta^i,e_\delta^i)}{x_\alpha^ix_\beta^i} \\
&+ x_\delta^i\sum_{1\leq \alpha,\beta\leq n_i} \frac{c_{i\alpha,i\beta}^{i\delta}d\omega(e_\alpha^i,e_\beta^i,e_\gamma^i)}{x_\alpha^ix_\beta^i},  
\end{align*}
for all $1\leq \gamma<\delta\leq n_i$ and $i=1,\dots,s$.
\end{enumerate}
\end{corollary}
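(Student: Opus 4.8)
The plan is to reduce everything to Lemma \ref{dgstar}, exploiting the block structure coming from decomposition \eqref{dec-sim}. First I would record that $\overline{Q}$ decomposes as a sum over the simple factors: since $[\ggo_i,\ggo_j]=0$ for $i\neq j$ and $[\ggo_0,\ggo]=0$, the structural constants $c_{ij}^k$ of $\overline{Q}$ in the adapted basis vanish unless all three indices lie in the same simple block $\ggo_i$, and on that block $\overline{Q}(e^i_\alpha,e^i_\beta,e^i_\gamma)=y_i\kil_{\ggo_i}([e^i_\alpha,e^i_\beta],e^i_\gamma) = -\tfrac{y_i}{z_i}\,g_b([e^i_\alpha,e^i_\beta],e^i_\gamma) = -\tfrac{y_i}{z_i}c_{i\alpha,i\beta}^{i\gamma}$, using $g_b|_{\ggo_i\times\ggo_i}=z_i(-\kil_{\ggo_i})$. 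So the sum in Lemma \ref{dgstar} collapses to a sum over $i=1,\dots,s$ and over index pairs within $\ggo_i$, and the cross terms $e_k\wedge e_l$ with $k,l$ in different blocks are absent because the corresponding summand $\sum_{i,j}\tfrac{c_{ij}^k \overline{Q}(e_i,e_j,e_l)}{x_ix_j}$ involves $c_{ij}^k$ with $k$ in one block forcing $i,j$ in that same block, while $\overline{Q}(e_i,e_j,e_l)$ needs $l$ in that block too. This is the only place where one must be a little careful, and it is the step I would flag as the main (though mild) obstacle: bookkeeping the vanishing of cross-block terms cleanly.

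Plugging the identity $\overline{Q}(e^i_\alpha,e^i_\beta,e^i_\delta)=-\tfrac{y_i}{z_i}c_{i\alpha,i\beta}^{i\delta}$ into the formula of Lemma \ref{dgstar} gives, for the coefficient of $e^i_\gamma\wedge e^i_\delta$,
$$
-\frac{3}{2}\left(x^i_\gamma\sum_{\alpha,\beta}\frac{c_{i\alpha,i\beta}^{i\gamma}\,(-\tfrac{y_i}{z_i})c_{i\alpha,i\beta}^{i\delta}}{x^i_\alpha x^i_\beta} - x^i_\delta\sum_{\alpha,\beta}\frac{c_{i\alpha,i\beta}^{i\delta}\,(-\tfrac{y_i}{z_i})c_{i\alpha,i\beta}^{i\gamma}}{x^i_\alpha x^i_\beta}\right) = -\frac{3}{2}\,\frac{y_i}{z_i}\,(x^i_\gamma-x^i_\delta)\sum_{\alpha,\beta}\frac{c_{i\alpha,i\beta}^{i\gamma}c_{i\alpha,i\beta}^{i\delta}}{x^i_\alpha x^i_\beta},
$$
which is exactly (i), once one notes the symmetry $c_{i\alpha,i\beta}^{i\gamma}c_{i\alpha,i\beta}^{i\delta}$ is the same in both sums. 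Part (ii) is then immediate: $\overline{Q}$ is closed (it is the pullback of a bi-invariant form, or directly from the Jacobi identity), so harmonicity is equivalent to $d_g^*\overline{Q}=0$, i.e.\ to the vanishing of every coefficient in (i); since $\tfrac{y_i}{z_i}\neq 0$ whenever $y_i\neq 0$ and the condition is vacuous for the blocks with $y_i=0$, this is precisely the stated system, and when $g=g_b$ all $x^i_\gamma$ within a block are equal so every coefficient vanishes identically, recovering the well-known bi-invariant case.

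For part (iii), I would simply not specialize $\beta$ all the way: apply Lemma \ref{dgstar} to $\beta=\overline{Q}+d\omega$. The $\overline{Q}$ part contributes the block-diagonal terms just computed, while the $d\omega$ part contributes $-\tfrac32\sum_{k<l}\big(x_k\sum_{i,j}\tfrac{c_{ij}^k d\omega(e_i,e_j,e_l)}{x_ix_j}-x_l\sum_{i,j}\tfrac{c_{ij}^l d\omega(e_i,e_j,e_k)}{x_ix_j}\big)e_k\wedge e_l$. Since $\overline{Q}+d\omega$ is automatically closed, it is $g$-harmonic iff $d_g^*(\overline{Q}+d\omega)=0$; for the cross-block coefficients ($k,l$ in different $\ggo_i$) the $\overline{Q}$-contribution is zero, so one gets extra equations, but within a single block $\ggo_i$ setting the coefficient of $e^i_\gamma\wedge e^i_\delta$ to zero and using $c_{ij}^k=0$ off-block to restrict the inner sums to $\ggo_i$ yields exactly the displayed equation (after dividing through by the harmless constant $-\tfrac32$ and, if one wishes, absorbing $\tfrac{y_i}{z_i}$). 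I would remark that the statement as written focuses on the block equations, which are the substantive ones; the cross-block conditions are either automatically part of the hypothesis on $\omega$ or can be listed separately, exactly as in Corollary \ref{Hkgharm-simple}(iii) applied blockwise.
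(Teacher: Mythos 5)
Your proof is correct and follows exactly the route the paper intends: the corollary is stated there without proof as a direct consequence of Lemma \ref{dgstar}, obtained by substituting $\overline{Q}(e^i_\alpha,e^i_\beta,e^i_\gamma)=-\tfrac{y_i}{z_i}c^{i\gamma}_{i\alpha,i\beta}$ (and zero on the center), noting that the cross-block coefficients of $d_g^*\overline{Q}$ vanish, and using that $\overline{Q}$ and $\overline{Q}+d\omega$ are closed so that harmonicity reduces to coclosedness. Your side remark on part (iii) --- that for a general $\omega\in\Lambda^2\ggo^*$ coclosedness of $\overline{Q}+d\omega$ also imposes the cross-block equations $x_k\sum_{a,b}c_{ab}^k d\omega(e_a,e_b,e_l)/(x_ax_b)=x_l\sum_{a,b}c_{ab}^l d\omega(e_a,e_b,e_k)/(x_ax_b)$ for $e_k,e_l$ in different factors, which the displayed system omits --- is a fair observation about the statement itself rather than a gap in your argument.
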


\section{Third cohomology of compact homogeneous spaces}\label{H3-sec}

In this section, we study the real de Rham cohomology of $3$-forms on a homogeneous space $M=G/K$ as in \S\ref{preli} with $G$ semisimple.  As in the Lie group case studied in \S\ref{LG-sec}, each bi-invariant symmetric bilinear form $Q$ on $\ggo$ defines a $G$-invariant $3$-form on $G/K$ given by $\widetilde{Q}:=Q([\cdot,\cdot],\cdot)\in(\Lambda^3\pg^*)^K$.  However, $\widetilde{Q}$ is never closed if nonzero (it is however $g_b$-coclosed with respect to any normal metric $g_b$ on $G/K$, see \eqref{Qbcc} below) and so the following question arises:  
\begin{quote}
Does any bi-invariant symmetric bilinear form $Q$ on $\ggo$ naturally determine a $G$-invariant closed $3$-form on $G/K$?
\end{quote}

We consider fixed decompositions, 
\begin{equation}\label{decs}
\ggo=\ggo_1\oplus\dots\oplus\ggo_s, \qquad \kg=\kg_0\oplus\kg_1\oplus\dots\oplus\kg_t, 
\end{equation}
where the $\ggo_i$'s and $\kg_j$'s are simple ideals of $\ggo$ and $\kg$, respectively, and $\kg_0$ is the center of $\kg$ of dimension $d_0$.  The following result answers the above question in a satisfactory way.    

\begin{proposition}\label{H3}
Let $M=G/K$ be a homogeneous space as in \S\ref{preli} with $G$ semisimple and consider any reductive decomposition $\ggo=\kg\oplus\pg$.  
\begin{enumerate}[{\rm (i)}]
\item Each bi-invariant symmetric bilinear form $Q$ on $\ggo$ such that $Q|_{\kg\times\kg}=0$ defines a closed $3$-form $H_Q\in(\Lambda^3\pg^*)^K$ by 
\begin{align}
H_Q(X,Y,Z) :=& 4Q([X,Y],Z) - Q([X,Y]_\pg,Z) + Q([X,Z]_\pg,Y) - Q([Y,Z]_\pg,X) \label{HQ}\\ 
=& Q([X,Y],Z) + Q([X,Y]_\kg,Z) - Q([X,Z]_\kg,Y) + Q([Y,Z]_\kg,X), \notag
\end{align}
for all $X,Y,Z\in\pg$.  Equivalently, $\hat{H_Q}:=\overline{Q}+\hat{d}\alpha_Q$, where 
$\alpha_Q\in\Lambda^2\ggo^*$ is defined by 
$$
\alpha_Q|_{\kg\times\kg}=0, \qquad \alpha_Q|_{\pg\times\pg}=0, \qquad \alpha_Q(X,Z)=Q(X,Z), \qquad\forall X\in\pg,  \quad Z\in\kg. 
$$ 
\item Every class in $H^3(G/K)$ has a unique representative of the form $H_Q$ as in part (i), that is, 
$$
H^3(G/K)\simeq\left\{ Q\in\sym^2(\ggo)^G:Q|_{\kg\times\kg}=0\right\} \qquad \mbox{and}\qquad b_3(G/K)=s-d_{G/K},
$$
where   
$$
d_{G/K}:=\dim\left\{ Q|_{\kg\times\kg}: Q\;\mbox{is a bi-invariant symmetric bilinear form on}\; \ggo\right\}.
$$  
\end{enumerate}
\end{proposition}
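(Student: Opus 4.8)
\medskip

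The plan is to verify part (i) by a direct computation and then deduce part (ii) as a dimension count using the description of $H^3(G/K)$ upstairs on $G$ provided in \S\ref{preli}. For part (i), the cleanest route is through the ``upstairs'' formulation: I would first check that the two displayed formulas for $H_Q(X,Y,Z)$ agree. Writing $[X,Y]=[X,Y]_\kg+[X,Y]_\pg$ and using the invariance identity $Q([A,B],C)+Q(B,[A,C])=0$ for the bi-invariant form $Q$, the terms $4Q([X,Y],Z)-Q([X,Y]_\pg,Z)=3Q([X,Y],Z)+Q([X,Y]_\kg,Z)$, and then one rewrites $Q([X,Z]_\pg,Y)=Q([X,Z],Y)-Q([X,Z]_\kg,Y)$ and similarly for the last term; invariance of $Q$ converts $Q([X,Z],Y)$ and $Q([Y,Z],X)$ into multiples of $Q([X,Y],Z)$, and collecting everything yields the second expression. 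Next I would identify $H_Q$ as a form on $G$: since $\alpha_Q$ is the $2$-form with $\alpha_Q|_{\kg\times\kg}=\alpha_Q|_{\pg\times\pg}=0$ and $\alpha_Q(X,Z)=Q(X,Z)$ for $X\in\pg$, $Z\in\kg$, one computes $\hat d\alpha_Q$ using the Lie-group differential $\hat d\alpha_Q(A,B,C)=-\alpha_Q([A,B],C)+\alpha_Q([A,C],B)-\alpha_Q([B,C],A)$; evaluating on $X,Y,Z\in\pg$ picks out exactly the $\kg$-components of the brackets, giving $-Q([X,Y]_\kg,Z)+Q([X,Z]_\kg,Y)-Q([Y,Z]_\kg,X)$. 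Adding $\overline Q(X,Y,Z)=Q([X,Y],Z)$ reproduces the second formula for $H_Q$, so $\hat H_Q=\overline Q+\hat d\alpha_Q$ as claimed. One must also check $\iota_\kg\hat H_Q=0$ so that $\hat H_Q$ really lies in $\Lambda^3(\ggo,K)$: since $Q|_{\kg\times\kg}=0$, for $W\in\kg$ one gets $\hat H_Q(W,Y,Z)=Q([W,Y],Z)-\alpha_Q([W,Y],Z)+\alpha_Q([W,Z],Y)-\alpha_Q([Y,Z],W)$, and using $\alpha_Q([W,Y]_\pg,Z)=0$ (both in $\pg$), $\alpha_Q([W,Y]_\kg,Z)$ pairs a $\kg$-vector with a $\pg$-vector; a short bookkeeping with the invariance of $Q$ shows this vanishes — this is the one genuinely fiddly step and the most likely place for a sign slip. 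Finally, closedness: $\hat d\hat H_Q=\hat d\overline Q+\hat d\hat d\alpha_Q=0$ because $\overline Q$ is closed on $G$ (it is bi-invariant) and $\hat d^2=0$; by the criterion recalled after \eqref{deltagk}, $\hat d\hat H_Q=0$ implies $dH_Q=0$ on $G/K$. Also $H_Q\in(\Lambda^3\pg^*)^K$ since both $\overline Q|_\pg$ and the $\kg$-correction terms are manifestly $\Ad(K)$-invariant (they are built from $\Ad(K)$-equivariant projections and the $\Ad(G)$-invariant form $Q$).

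\medskip

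For part (ii), I would argue entirely on $G$ using $H^3(G/K)=\Ker\hat d/\Ima\hat d$ inside $\Lambda^\bullet(\ggo,K)$. The map $Q\mapsto[\hat H_Q]=[\overline Q+\hat d\alpha_Q]=[\overline Q]$ sends the space $V:=\{Q\in\sym^2(\ggo)^G:Q|_{\kg\times\kg}=0\}$ into $H^3(G/K)$ — note $[\hat H_Q]$ equals the class of $\overline Q$ because $\hat d\alpha_Q$ is exact, and we need $\overline Q\in\Lambda^3(\ggo,K)$, i.e. $\iota_\kg\overline Q=0$, which holds precisely because $Q|_{\kg\times\kg}=0$ (for $W\in\kg$, $\overline Q(W,Y,Z)=Q([W,Y],Z)=-Q(Y,[W,Z])$ still need not vanish for general $Y,Z$ — so here the correct statement is that $\hat H_Q$, not $\overline Q$, is the representative in $\Lambda^3(\ggo,K)$, and the cohomology class is computed via the isomorphism $H^3(G/K)\cong H^3(\Lambda^\bullet(\ggo,K))$). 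I would then show this map $V\to H^3(G/K)$ is (a) well-defined, (b) injective, (c) surjective. Surjectivity: any class in $H^3(G/K)$ pulls back to a class in $H^3(G)$ via $\pi^*$, and by \cite[Ch.\ V, Thm.\ 12.10]{Brd} (quoted in \S\ref{LG-sec}) every class in $H^3(G)$ is represented by a Cartan form $\overline Q$ for some bi-invariant $Q=y_1\kil_{\ggo_1}+\dots+y_s\kil_{\ggo_s}$; the requirement $\iota_\kg(\text{representative})=0$, after correcting $\overline Q$ by the exact form $\hat d\alpha_Q$, forces exactly the closed subspace $V$ (equivalently $Q|_{\kg\times\kg}=0$). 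Injectivity: if $\hat H_Q=\hat d\gamma$ for some $\gamma\in\Lambda^2(\ggo,K)$, then $\overline Q=\hat d(\gamma-\alpha_Q)$ is exact on $G$, hence $[\overline Q]=0$ in $H^3(G)$, and since distinct Cartan forms give distinct classes on $G$ (the $b_3(G)=s$ statement), $Q=0$. Combining, $H^3(G/K)\cong V$.

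\medskip

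The Betti-number formula is then the dimension of $V$. The space of all bi-invariant symmetric bilinear forms on $\ggo$ is $s$-dimensional, spanned by $\kil_{\ggo_1},\dots,\kil_{\ggo_s}$ (using $\zg(\ggo)=0$); the restriction map $Q\mapsto Q|_{\kg\times\kg}$ is linear, its image has dimension $d_{G/K}$ by definition, and $V$ is its kernel, so $\dim V=s-d_{G/K}$, giving $b_3(G/K)=s-d_{G/K}$. In particular, $K$ trivial forces $d_{G/K}=0$ and $b_3=s$, while for $K$ acting with no trivial summand one has $d_{G/K}\geq 1$ provided some $\pi_i(\kg)\neq 0$. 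The main obstacle I anticipate is the careful $\iota_\kg$-bookkeeping in part (i): one must be scrupulous about which bracket components land in $\kg$ versus $\pg$ and about the alternating signs in $\hat d$, since the whole point is that the $\kg$-correction terms are precisely engineered to kill $\iota_\kg\overline Q$ while keeping the form closed. Everything else — the passage between $G/K$ and $G$ cohomology, and the linear-algebra dimension count — is routine given the tools already assembled in \S\ref{preli} and \S\ref{LG-sec}.
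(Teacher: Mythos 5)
Your proposal is correct in outline and follows essentially the same route as the paper: pull everything back to $G$, invoke the Cartan-form description of $H^3(G)$, and organize the argument around the identity $\hat{H}_Q=\overline{Q}+\hat{d}\alpha_Q$. Two points need tightening before it is a complete proof. First, a sign: with the convention $\alpha_Q(X,Z)=Q(X,Z)$ for $X\in\pg$, $Z\in\kg$, skewness of $\alpha_Q$ and symmetry of $Q$ give $\alpha_Q([X,Y]_\kg,Z)=-Q([X,Y]_\kg,Z)$, so in fact $\hat{d}\alpha_Q(X,Y,Z)=+Q([X,Y]_\kg,Z)-Q([X,Z]_\kg,Y)+Q([Y,Z]_\kg,X)$, the opposite of what you wrote; only with these signs does adding $\overline{Q}(X,Y,Z)$ reproduce the second line of \eqref{HQ}. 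Second, and more substantively, your surjectivity step needs $[\hat{H}]=[\hat{H}_Q]$ inside the subcomplex $\Lambda^\bullet(\ggo,K)$, not merely exactness of the difference as a form on $G$. The missing computation is the one the paper leads with: writing $\hat{H}=\overline{Q}+\hat{d}\alpha$ with $\alpha\in(\Lambda^2\ggo^*)^K$, the condition $\iota_Z\hat{H}=0$ for $Z\in\kg$ together with Cartan's formula ($\iota_Z\hat{d}\alpha=-\hat{d}\iota_Z\alpha$ since $\lca_Z\alpha=0$) forces $\alpha(X,Z)=Q(X,Z)$ for all $X\in\ggo$, $Z\in\kg$; this is simultaneously what forces $Q|_{\kg\times\kg}=0$ (compare the skew form $\alpha$ with the symmetric form $Q$ on $\kg\times\kg$) and what shows $\alpha-\alpha_Q\in\Lambda^2(\ggo,K)$, so that $\hat{H}-\hat{H}_Q=\hat{d}(\alpha-\alpha_Q)$ is exact in the right complex — equivalently, downstairs, $H=H_Q+d\beta$ with $\beta=\alpha|_{\pg\times\pg}\in(\Lambda^2\pg^*)^K$. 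With these two points made explicit, your verification of (i), the injectivity argument via $b_3(G)=s$, and the rank--nullity count $\dim V=s-d_{G/K}$ complete the proof exactly as in the paper.
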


\begin{remark}
The definition of the $3$-form $H_Q$ given in \eqref{HQ} depends on the reductive complement $\pg$.  
\end{remark}

\begin{proof}
Given a closed $3$-form $H\in(\Lambda^3\pg^*)^K$, there exist a bi-invariant symmetric bilinear form $Q$ on $\ggo$ and $\alpha\in(\Lambda^2\ggo^*)^K$ such that $\hat{H}=\overline{Q}+\hat{d}\alpha$.  Thus
$$
Q([\cdot,\cdot],Z) = -\iota_Z\hat{d}\alpha = \hat{d}\iota_Z\alpha-\lca_Z\alpha = \alpha([\cdot,\cdot],Z), \qquad\forall Z\in\kg,
$$
which implies that $Q(X,Z)=\alpha(X,Z)$ for all $X\in\ggo$, $Z\in\kg$.  In particular, $Q|_{\kg\times\kg}=0$ and for all $X,Y,W\in\pg$, 
\begin{align*}
H(X,Y,W) =& \hat{H}(X,Y,W) = Q([X,Y],W)+\hat{d}\alpha(X,Y,W) \\ 
=& Q([X,Y],W)-\alpha([X,Y],W) +\alpha([X,W],Y)-\alpha([Y,W],X) \\ 
=& Q([X,Y],W)-\alpha([X,Y]_\kg,W) +\alpha([X,W]_\kg,Y) \\ 
&-\alpha([Y,W]_\kg,X) 
 -\alpha([X,Y]_\pg,W) +\alpha([X,W]_\pg,Y)-\alpha([Y,W]_\pg,X) \\ 
 =& Q([X,Y],W) + Q([X,Y]_\kg,W) -Q([X,W]_\kg,Y) +Q([Y,W]_\kg,X) \\
 &-\alpha([X,Y]_\pg,W) +\alpha([X,W]_\pg,Y)-\alpha([Y,W]_\pg,X) \\ 
 =&H_Q(X,Y,W)+d\beta(X,Y,W),
\end{align*}
where $\beta:=\alpha|_{\pg\times\pg}\in(\Lambda^2\pg^*)^K$, that is, $H=H_Q+d\beta$.  Thus $H_Q\in(\Lambda^3\pg^*)^K$, $dH_Q=0$ and so the first statement of part (i) and part (ii) follow.  Conversely, given a bi-invariant $Q$ such that $Q|_{\kg\times\kg}=0$, we consider the $\hat{d}$-closed $3$-form $\overline{Q}+\hat{d}\alpha_Q\in\Lambda^3(\ggo,K)$.  This $3$-form therefore corresponds to a closed $3$-form in $(\Lambda^3\pg^*)^K$, which by the above computation coincides with $H_Q$, that is, $\hat{H}_Q=\overline{Q}+\hat{d}\alpha_Q$, concluding the proof of part (i).  
\end{proof}

Given $Z\in\kg$, there exist unique vectors $Z_i\in\ggo_i$ such that $Z=(Z_1,\dots,Z_s)$, where $Z_i=\pi_i(Z)$ and $\pi_i:\ggo\rightarrow\ggo_i$ are the projections relative to the decomposition $\ggo=\ggo_1\oplus\dots\oplus\ggo_s$.  Thus $\pi_i$ is a Lie algebra homomorphism and $\pi_i:\kg_j\rightarrow\pi_i(\kg_j)$ is an isomorphism as soon as $\pi_i(\kg_j)\ne 0$, in which case $\pi_i(\kg_j)$ is a simple Lie subalgebra of $\ggo_i$ for any $j\geq 1$.  For any pair $i,j$, $1\leq i\leq s$ and $0\leq j\leq t$, there exists a constant $0\leq c_{ij}\leq 1$ such that
\begin{equation}\label{kilcij}
\kil_{\pi_i(\kg_j)} = c_{ij}\kil_{\ggo_i}|_{\pi_i(\kg_j)\times\pi_i(\kg_j)},  
\end{equation}
where $\kil_{\pi_i(\kg_j)}$ and $\kil_{\ggo_i}$ are respectively the Killing forms of $\pi_i(\kg_j)$ and  $\kil_{\ggo_i}$.  We have that $c_{ij}=0$ if and only if $j=0$ or $\pi_i(\kg_j)=0$, and $c_{ij}=1$ if and only if $\pi_i(\kg_j)=\ggo_i$.  Note that if $\pi_i(\kg_j)\ne 0$, then 
$$
\kil_{\kg}(Z,W) = \kil_{\kg_j}(Z,W) = \kil_{\pi_i(\kg_j)}(Z_i,W_i) = c_{ij}\kil_{\ggo_i}(Z_i,W_i), \qquad\forall Z,W\in\kg_j. 
$$
A homogeneous space $G/K$ is determined by the Lie groups $G$ and $K$ and last but not least, by the way $K$ is embedded in $G$.  Note that each {\it Killing constant} $c_{ij}$ is an invariant of how is $\kg_j$ embedded in $\ggo_i$.       

The following bounds and equalities on $b_3(G/K)$ are actually immediate consequences of the spectral sequence for the fibration $G\rightarrow G/K \rightarrow B_K$.  We prove them for self-containedness.  

\begin{proposition}\label{H3-cor}
Let $M=G/K$ be a homogeneous space as above.  
\begin{enumerate}[{\rm (i)}]
\item If $\ggo$ has $s$ simple factors and $\kg$ has $t$ simple factors and $d_0$-dimensional center, then 
$$
s-t-\tfrac{d_0(d_0+1)}{2} \leq b_3(G/K) \leq s.  
$$
\item If $K$ is semisimple, then a bi-invariant $Q=y_1\kil_{\ggo_1}+\dots+y_s\kil_{\ggo_s}$,  
$y_i\in\RR$, satisfies that $Q|_{\kg\times\kg}=0$ if and only if the following $t$ conditions hold:
\begin{equation}\label{cij}
\sum_{i:\pi_i(\kg_j)\ne 0}\frac{1}{c_{ij}}y_i=0, \qquad\forall j=1,\dots,t,  \qquad\mbox{where}\quad \kil_{\pi_i(\kg_j)} = c_{ij}\kil_{\ggo_i}|_{\pi_i(\kg_j)\times\pi_i(\kg_j)}. 
\end{equation}

\item Equality $b_3(G/K)=s$ holds if and only if $\kg=0$ (i.e., $K$ is the trivial subgroup).  

\item $b_3(G/K)=s-1$ if $\kg$ is simple or one-dimensional.  
\end{enumerate}
\end{proposition}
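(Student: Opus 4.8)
The plan is to reduce all four statements to a single rank computation via Proposition~\ref{H3}(ii). That result identifies $H^3(G/K)$ with $\{Q\in\sym^2(\ggo)^G:Q|_{\kg\times\kg}=0\}$ and gives $b_3(G/K)=s-d_{G/K}$, where $d_{G/K}$ is precisely the rank of the restriction map $\rho:\sym^2(\ggo)^G\to\sym^2(\kg^*)$, $Q\mapsto Q|_{\kg\times\kg}$. As recalled in \S\ref{LG-sec}, for $G$ semisimple $\sym^2(\ggo)^G$ has basis $\kil_{\ggo_1},\dots,\kil_{\ggo_s}$, so $\rho$ is a linear map out of an $s$-dimensional space and everything follows from bounding and computing $\dim\Ima\rho$.

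For part (i), I would first note that $\Ima\rho\subseteq\sym^2(\kg^*)^K$, since the restriction of an $\Ad(G)$-invariant form to $\kg$ is $\Ad(K)$-invariant, and then compute $\dim\sym^2(\kg^*)^K$ using $\kg=\kg_0\oplus\kg_1\oplus\dots\oplus\kg_t$. The $\ad(\kg_j)$-invariance equations, together with $[\kg_j,\kg_j]=\kg_j$, force any such form to vanish on $\kg_j\times\kg_{j'}$ for $j\neq j'$ and on $\kg_0\times\kg_j$; each simple ideal $\kg_j$ contributes the single invariant form $\kil_{\kg_j}$; and since $K$ is connected, $\Ad(K)$ acts trivially on the center $\kg_0$, so all of $\sym^2(\kg_0^*)$ is invariant. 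Hence $\dim\sym^2(\kg^*)^K=t+\tfrac{d_0(d_0+1)}{2}$, which yields the lower bound $b_3(G/K)\geq s-t-\tfrac{d_0(d_0+1)}{2}$; the upper bound $b_3(G/K)\leq s$ is immediate from $d_{G/K}\geq 0$.

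For part (iii), the key point is that $-\kil_\ggo$ is a bi-invariant form and $-\kil_\ggo|_{\kg\times\kg}$ is positive definite because $\ggo$ is compact semisimple; thus $\rho$ is already nonzero on the line $\RR(-\kil_\ggo)$ unless $\kg=0$, so $d_{G/K}=0$ forces $\kg=0$, and the converse is trivial. Part (iv) then follows at once: if $\kg$ is simple then $t=1$ and $d_0=0$, while if $\kg$ is one-dimensional then $t=0$ and $d_0=1$, so in either case $\dim\sym^2(\kg^*)^K=1$ by the computation in part (i), whence $d_{G/K}\in\{0,1\}$, and part (iii) excludes $0$, leaving $b_3(G/K)=s-1$.

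For part (ii) I would make $\rho$ explicit on the basis. Fixing $j$ and $Z,W\in\kg_j$ and writing $Z_i=\pi_i(Z)$, one has $\kil_{\ggo_i}|_{\kg\times\kg}(Z,W)=\kil_{\ggo_i}(Z_i,W_i)$; when $\pi_i(\kg_j)\neq 0$ the isomorphism $\pi_i:\kg_j\to\pi_i(\kg_j)$ combined with \eqref{kilcij} gives $\kil_{\ggo_i}(Z_i,W_i)=\tfrac{1}{c_{ij}}\kil_{\kg_j}(Z,W)$, and otherwise this term vanishes, while the cross terms $\kil_{\ggo_i}|_{\kg_j\times\kg_{j'}}$ vanish since distinct commuting simple ideals of $\pi_i(\kg)\subseteq\ggo_i$ are $\kil_{\ggo_i}$-orthogonal. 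Therefore $Q|_{\kg\times\kg}=\sum_{j=1}^{t}\bigl(\sum_{i:\pi_i(\kg_j)\neq 0}\tfrac{y_i}{c_{ij}}\bigr)\kil_{\kg_j}$ for $Q=\sum_i y_i\kil_{\ggo_i}$, and since the forms $\kil_{\kg_1},\dots,\kil_{\kg_t}$ have disjoint supports and are linearly independent, $Q|_{\kg\times\kg}=0$ is equivalent to the $t$ equations in \eqref{cij}. All the arguments are routine bookkeeping with the Killing constants $c_{ij}$; the only mildly delicate points — and the closest thing to an obstacle — are the triviality of the $\Ad(K)$-action on $\kg_0$ and the $\kil_{\ggo_i}$-orthogonality of the simple ideals of $\pi_i(\kg)$, both of which are standard facts about compact semisimple Lie algebras.
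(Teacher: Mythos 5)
Your proof is correct and follows essentially the same route as the paper: both reduce everything to Proposition \ref{H3}(ii) and then count the constraints that $Q|_{\kg\times\kg}=0$ imposes on $(y_1,\dots,y_s)$ via the decomposition $\kg=\kg_0\oplus\kg_1\oplus\dots\oplus\kg_t$, with the Killing constants $c_{ij}$ giving part (ii) and the negative definiteness of $\kil_\ggo$ on $\kg$ giving part (iii). Your packaging of the lower bound as $d_{G/K}\leq\dim\sym^2(\kg^*)^K=t+\tfrac{d_0(d_0+1)}{2}$ is a slightly tidier formulation of the paper's direct count of orthogonality conditions (the $t$ equations \eqref{cij} plus the $\leq\tfrac{d_0(d_0+1)}{2}$ conditions from \eqref{Zk0}), but the substance is the same.
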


\begin{remark}
Generically, 
$$
b_3(G/K) = \left\{\begin{array}{ll} s-t-\tfrac{d_0(d_0+1)}{2}, \qquad & \mbox{if}\quad s\geq t+\tfrac{d_0(d_0+1)}{2},\\ \\
0, \qquad &\mbox{if}\quad  s<t+\tfrac{d_0(d_0+1)}{2}. 
\end{array}\right. 
$$
See Examples \ref{ex1}, \ref{ex2} and \ref{ex3} below for exceptions.  
\end{remark}

\begin{proof}
It follows from Proposition \ref{H3}, (ii) that $b_3(G/K)\leq s$.  We first prove part (ii).  We have that $Q(\kg_j,\kg_l)=0$ for all $j\ne l$ and it follows from \eqref{kilcij} that
$$
Q(Z,Z) = \sum_{i=1}^s y_i\kil_{\ggo_i}(Z_i,Z_i) = \sum_{i:\pi_i(\kg_j)\ne 0}\frac{y_i}{c_{ij}}\kil_{\kg}(Z,Z), \qquad\forall Z\in\kg_j, \quad j=1,\dots,t. 
$$
Thus $Q(Z,Z)=0$ for all $Z\in\kg$ if and only if condition \eqref{cij} holds.  

On the other hand, if $\{ Z^1,\dots,Z^{d_0}\}$ is a basis of $\kg_0$, then for each $Z=a_1Z^1+\dots+a_{d_0}Z^{d_0}\in\kg_0$, $Q(Z,Z)=0$ if and only if $(y_1,\dots,y_s)$ is orthogonal with respect to the canonical inner product of $\RR^s$ to   
\begin{equation}\label{Zk0}
\begin{array}{c} 
\sum\limits_{\alpha=1}^{d_0} a_\alpha^2\left(\kil_{\ggo_1}(Z^\alpha_1,Z^\alpha_1),\dots,\kil_{\ggo_s}(Z^\alpha_s,Z^\alpha_s)\right) \\ + \sum\limits_{1\leq \alpha<\beta\leq d_0} 2a_\alpha a_\beta\left(\kil_{\ggo_1}(Z^\alpha_1,Z^\beta_1),\dots,\kil_{\ggo_s}(Z^\alpha_s,Z^\beta_s)\right),
\end{array}
\end{equation}
which, by varying $Z\in\kg_0$, generates a subspace of dimension $\leq \frac{d_0(d_0+1)}{2}$.  This, together with \eqref{cij}, gives that $s-t-\tfrac{d_0(d_0+1)}{2} \leq b_3(G/K)$, so part (i) follows.  Moreover, the only way to obtain that $b_3(G/K)=s$ is that $\pi_i(\kg_j)=0$ for all $i,j$ (i.e., $[\kg,\kg]=0$) and $\kil_{\ggo_i}(Z^\alpha_i,Z^\alpha_i)=0$ for all $1\leq \alpha\leq d_0$, $i=1,\dots,s$ (i.e., $\kg_0=0$), that is, if and only if $\kg=0$, which proves (iii).  Finally, part (iv) follows from (i) and (iii), concluding the proof. 
\end{proof}

\begin{example}\label{ex1}
For $M=\SU(2n)\times\SU(2n)/\Delta\SU(2)^n$ with the standard block diagonal embedding, we have $s=2$, $t=n$, $d_0=0$ and $c_{1j}=c_{2j}=\frac{1}{n}$ for all $j=1,\dots,n$ (see \cite[pp.37]{DtrZll}).  It follows from \eqref{cij} that $Q:= \kil_{\ggo_1}-\kil_{\ggo_2}$ produces a nonzero class $[H_Q]$ and so $b_3(G/K)=1$, in spite of $s-t-\tfrac{d_0(d_0+1)}{2}=2-n$ is way negative for $n$ large.  
\end{example}

\begin{example}\label{ex2}
If $M=\SU(n)\times\SU(n)/\Delta T^{n-1}$, then $s=2$, $t=0$ and $d_0=n-1$, and one obtains from \eqref{Zk0} that $[H_Q]\ne 0$ for $Q:= \kil_{\ggo_1}-\kil_{\ggo_2}$.  Thus $b_3(M)=1$ and the number $s-t-\tfrac{d_0(d_0+1)}{2}=2-\frac{n(n-1)}{2}$ is however very negative.  
\end{example}

Conditions \eqref{cij} and \eqref{Zk0} motivate the definition of the following concept guaranteing a large third Betti number for a homogeneous space.      

\begin{definition}\label{aligned}
A homogeneous space $G/K$ as above, where $G$ has $s$ simple factors and $K$ has $t$ simple factors and $d_0$-dimensional center, is called {\it aligned} when there exist positive constants $c_1,\dots,c_s$ such that: 
\begin{enumerate}[{\rm (i)}] 
\item The Killing constants defined in \eqref{kilcij} satisfy 
$$
(c_{1j},\dots,c_{sj}) = \lambda_j(c_1,\dots,c_s), \quad\mbox{for some}\quad \lambda_j>0, \qquad\forall j=1,\dots,t.
$$
\item There exists an inner product $\ip$ on $\kg_0$ such that 
$$
\kil_{\ggo_i}(Z_i,W_i)=\frac{-1}{c_i}\la Z,W\ra, \qquad\forall Z,W\in\kg_0.   
$$  
\item $\frac{1}{c_1}+\dots+\frac{1}{c_s}=1$.  In particular, $\kil_{\kg_j}=\lambda_j\kil_{\ggo}|_{\kg_j\times\kg_j}$ for all $j=1,\dots,t$ and $\ip=-\kil_{\ggo}|_{\kg_0\times\kg_0}$. 
\end{enumerate}
\end{definition}

If $G/K$ is aligned, then $\pi_i$ is injective for all $i$ (in particular, $\pi_i(\kg_j)\simeq\kg_j$ and $\pi_i(\kg)\simeq\kg$ for all $i,j$) and the Killing constants are given by
$$
c_{ij}=c_i\lambda_j >0, \qquad\forall i=1,\dots,s, \quad j=1,\dots,t.  
$$
\begin{example}
If $\pi_i(\kg)\ne 0$ for all $i=1,\dots,s$ and $\kg$ is simple or one-dimensional, then $G/K$ is aligned.    
\end{example}

\begin{example}\label{GLO}
If $\ggo_1=\dots=\ggo_s=\hg$ and $\pi_1=\dots=\pi_s$, i.e., $G=H\times\dots\times H$ ($s$-times) and $K=\Delta L$ for some subgroup $L\subset H$, then $G/K$ is aligned with $(c_1,\dots,c_s)=(s,\dots,s)$ and $\kil_{\lgo_j}=s\lambda_j\kil_\hg|_{\lgo_j\times\lgo_j}$ (or equivalently, $\kil_{\lgo_j}=\lambda_j\kil_\ggo|_{\lgo_j\times\lgo_j}$) for each simple factor $\lgo_j$ of $\lgo$.  It is easy to see that $M=G/K$ is diffeomorphic to $(H/L)\times H^{s-1}$, where $H^{s-1}:=H\times\dots\times H$ ($(s-1)$-times).  In the particular case when $L=H$ these spaces are called {\it Ledger-Obata} (see \cite{NklNkn}).  
\end{example}

\begin{proposition}\label{al-car}
The following conditions on a homogeneous space $G/K$ as above are equivalent: 
\begin{enumerate}[{\rm (i)}]
\item $G/K=G'/K\times H$, where $G'/K$ is aligned and $H$ is semisimple. 

\item $b_3(G/K)=s-1$. 

\item There exists an inner product $\ip$ on $\kg$ such that $Q|_{\kg\times\kg}$ coincides with $\ip$ up to scaling for any bi-invariant symmetric bilinear form $Q$ on $\ggo$.  
\end{enumerate}
\end{proposition}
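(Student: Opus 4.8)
The plan is to prove the chain $(\mathrm{ii})\Leftrightarrow(\mathrm{iii})\Rightarrow(\mathrm{i})\Rightarrow(\mathrm{iii})$, the heart of the matter being a single linear-algebra statement about how the $s$ restricted Killing forms $\kil_{\ggo_i}|_{\kg\times\kg}$ sit inside $\sym^2(\kg)$. We may assume $\kg\neq 0$, since otherwise all three conditions are vacuous or trivially checked. The equivalence $(\mathrm{ii})\Leftrightarrow(\mathrm{iii})$ follows almost formally from Proposition~\ref{H3}(ii): since $\ggo$ is semisimple, $\sym^2(\ggo)^G$ is the $s$-dimensional space spanned by $\kil_{\ggo_1},\dots,\kil_{\ggo_s}$, so $d_{G/K}=\dim\spann\{\kil_{\ggo_1}|_{\kg\times\kg},\dots,\kil_{\ggo_s}|_{\kg\times\kg}\}$. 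This span always contains the negative-definite (hence nonzero) form $\kil_\ggo|_{\kg\times\kg}=\sum_i\kil_{\ggo_i}|_{\kg\times\kg}$, so $d_{G/K}\geq 1$; thus $b_3(G/K)=s-1$ iff $d_{G/K}=1$ iff every $\kil_{\ggo_i}|_{\kg\times\kg}$ is a scalar multiple of $\kil_\ggo|_{\kg\times\kg}$, which, as $Q|_{\kg\times\kg}=\sum_i y_i\kil_{\ggo_i}|_{\kg\times\kg}$ for $Q=\sum_i y_i\kil_{\ggo_i}$, is precisely condition (iii) with $\ip:=-\kil_\ggo|_{\kg\times\kg}$.

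For $(\mathrm{iii})\Rightarrow(\mathrm{i})$ I would write $\kil_{\ggo_i}|_{\kg\times\kg}=-\mu_i\ip$ and normalize so that $\ip=-\kil_\ggo|_{\kg\times\kg}$, forcing $\mu_i\geq 0$ and $\sum_i\mu_i=1$. Relabel so that $\mu_i>0$ for $i\leq s'$, $\mu_i=0$ for $i>s'$, and set $\ggo':=\ggo_1\oplus\dots\oplus\ggo_{s'}$, $\hg:=\ggo_{s'+1}\oplus\dots\oplus\ggo_s$. When $\mu_i=0$ the form $\kil_{\ggo_i}|_{\kg\times\kg}$ vanishes, and since $\kil_{\ggo_i}$ is negative definite on $\ggo_i$ this forces $\pi_i(\kg)=0$; hence $\kg\subset\ggo'$, and as $G$ and $K$ are connected this yields $G/K=G'/K\times H$ with $\hg=\Lie(H)$ semisimple. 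When $\mu_i>0$ the form $\kil_{\ggo_i}|_{\kg\times\kg}=-\mu_i\ip$ is negative definite, so $\pi_i$ is injective on $\kg$; in particular $\pi_i(\kg_j)\neq 0$ for every simple factor $\kg_j$, and $\pi_i$ restricts to a Lie-algebra (hence Killing-form-preserving) isomorphism onto $\pi_i(\kg_j)$, so the Killing constants $c_{ij}$ of \eqref{kilcij} are defined and positive. It then remains to verify that $G'/K$ is aligned with $c_i:=1/\mu_i$: part (iii) of Definition~\ref{aligned} is $\sum_{i\leq s'}\mu_i=1$; for part (i), combine $\kil_\ggo|_{\kg_j\times\kg_j}=\nu_j\kil_{\kg_j}$ (with $\nu_j>0$, since invariant forms on the simple ideal $\kg_j$ form a line) with \eqref{kilcij} and the fact that $\pi_i|_{\kg_j}$ preserves Killing forms to get $\kil_{\ggo_i}|_{\kg_j\times\kg_j}=\tfrac1{c_{ij}}\kil_{\kg_j}$; matching this with $-\mu_i\ip|_{\kg_j\times\kg_j}=\mu_i\nu_j\kil_{\kg_j}$ gives $c_{ij}=c_i/\nu_j$, i.e.\ $\lambda_j=1/\nu_j>0$; and part (ii) is immediate from $\kil_{\ggo_i}|_{\kg_0\times\kg_0}=-\mu_i\ip|_{\kg_0\times\kg_0}$.

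For $(\mathrm{i})\Rightarrow(\mathrm{iii})$, write $\ggo=\ggo'\oplus\hg$ with $\kg\subset\ggo'$ and $G'/K$ aligned with constants $c_i,\lambda_j$. Each simple factor $\ggo_i$ of $\hg$ has $\pi_i(\kg)=0$, so $\kil_{\ggo_i}|_{\kg\times\kg}=0$. For $\ggo_i$ a factor of $\ggo'$, compute $\kil_{\ggo_i}|_{\kg\times\kg}$ block by block along $\kg=\kg_0\oplus\kg_1\oplus\dots\oplus\kg_t$: this form is $\ad(\kg)$-invariant, hence block diagonal (an $\ad(\kg)$-invariant symmetric form kills $\kg_j\times\kg_l$ for $j\neq l$ and $\kg_0\times\kg_j$ for $j\geq 1$, since each $\kg_j$ is perfect); on the block $\kg_j$ ($j\geq 1$), alignedness ($c_{ij}=c_i\lambda_j$) together with \eqref{kilcij} and Definition~\ref{aligned}(iii) gives $\kil_{\ggo_i}|_{\kg_j\times\kg_j}=\tfrac1{c_i}\kil_\ggo|_{\kg_j\times\kg_j}$, and on $\kg_0$, Definition~\ref{aligned}(ii)--(iii) gives $\kil_{\ggo_i}|_{\kg_0\times\kg_0}=\tfrac1{c_i}\kil_\ggo|_{\kg_0\times\kg_0}$. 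Hence $\kil_{\ggo_i}|_{\kg\times\kg}=\tfrac1{c_i}\kil_\ggo|_{\kg\times\kg}$ for all $i$ (reading $1/c_i:=0$ for $\hg$-factors), so every bi-invariant $Q=\sum_i y_i\kil_{\ggo_i}$ satisfies $Q|_{\kg\times\kg}=-\bigl(\sum_{i\leq s'}\tfrac{y_i}{c_i}\bigr)\ip$ with $\ip:=-\kil_\ggo|_{\kg\times\kg}$, which is (iii).

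The step I expect to be the main obstacle is the bookkeeping in $(\mathrm{iii})\Rightarrow(\mathrm{i})$: one must extract from the single hypothesis (iii) not only the constants $\mu_i$ but also, crucially via the positive-definiteness of $\ip$, the injectivity of $\pi_i|_{\kg}$ and the fact that $\pi_i$ meets every simple factor of $\kg$ whenever $\mu_i\neq 0$ --- without this the Killing constants in Definition~\ref{aligned} are not even defined. The two elementary structural facts used repeatedly (invariant symmetric forms on a semisimple Lie algebra are diagonal with respect to the decomposition into simple ideals; $\pi_i$ restricted to a simple ideal of $\kg$ is a Killing-form-preserving isomorphism onto its image) are best isolated as a preliminary remark, after which both directions become short.
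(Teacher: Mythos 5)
Your proposal is correct and follows essentially the same route as the paper: the equivalence (ii)$\Leftrightarrow$(iii) is read off from Proposition \ref{H3}(ii) via $d_{G/K}=\dim\spann\{\kil_{\ggo_i}|_{\kg\times\kg}\}\geq 1$, and the equivalence (i)$\Leftrightarrow$(iii) is the same computation showing $\kil_{\ggo_i}|_{\kg\times\kg}=\tfrac{1}{c_i}\kil_{\ggo}|_{\kg\times\kg}$ (the paper phrases (iii)$\Rightarrow$(i) via a linear functional $f(Q)$ with nonnegative coefficients $a_i=1/c_i$ rather than normalizing $\ip=-\kil_\ggo|_{\kg\times\kg}$ at the outset, but this is only cosmetic). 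The only other difference is organizational: you argue blockwise using invariance of the forms where the paper uses an adapted orthonormal basis of $\kg$.
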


\begin{remark}
The proof of part (iii) assuming (i) given below determines the inner product $\ip=-\kil_{\ggo}|_{\kg\times\kg}$.  
\end{remark}

\begin{proof}
The equivalence between (ii) and (iii) follows from Proposition \ref{H3}, (ii), so we will only prove the equivalence between (i) and (iii).  If $G/K$ is aligned, then there is an {\it adapted basis} $\{ Z^1,\dots,Z^{\dim{\kg}}\}$ of $\kg=\kg_0\oplus\kg_1\oplus\dots\oplus\kg_t$ (in the sense that it is the union of bases for each $\kg_j$) such that 
\begin{align*}
\kil_{\ggo_i}(Z^\alpha_i,Z^\beta_i) =& \delta_{\alpha\beta}\frac{-1}{c_i}, \qquad\forall Z^\alpha,Z^\beta\in\kg_0, \quad i=1,\dots,s, \quad \mbox{(see Definition \ref{aligned}, (ii))},\\  
\kil_{\kg_j}(Z^\alpha,Z^\beta) =& \delta_{\alpha\beta}(-\lambda_j), \qquad\forall Z^\alpha,Z^\beta\in\kg_j, \quad 1\leq j\leq t, 
\end{align*}
which implies that 
$$
\kil_{\ggo_i}(Z^\alpha_i,Z^\beta_i) = \frac{1}{c_i\lambda_j}\kil_{\kg_j}(Z^\alpha,Z^\beta) = \delta_{\alpha\beta}\frac{-1}{c_i}, \qquad\forall Z^\alpha,Z^\beta\in\kg_j, \quad 1\leq j\leq t.  
$$
Thus for any $Q=y_1\kil_{\ggo_1}+\dots+y_s\kil_{\ggo_s}$, $y_i\in\RR$, we obtain that for all $i=1,\dots,s$,
\begin{equation}\label{bi}
Q(Z^\alpha_i,Z^\beta_i) = -\delta_{\alpha\beta}\frac{y_i}{c_i}, \quad\forall Z^\alpha,Z^\beta\in\kg, 
\qquad
Q(Z_i,W_i) = -\frac{y_i}{c_i}\la Z,W\ra, \quad\forall Z,W\in\kg, 
\end{equation}
and hence
\begin{equation}\label{bi2}
Q(Z,W) = -(\tfrac{y_1}{c_1}+\dots+\tfrac{y_s}{c_s})\la Z,W\ra, \qquad\forall Z,W\in\kg, 
\end{equation}
where $\ip$ is the inner product on $\kg$ such that $\{ Z^\alpha\}$ is $\ip$-orthonormal.  This proves part (iii).    We are using here that $Q(Z_i,W_i) = 0$ for all $Z\in\kg_j$, $W\in\kg_m$, where $j\ne m$, i.e., $\pi(\kg)=\pi_i(\kg_0)\oplus\dots\oplus\pi_i(\kg_t)$ is a $Q$-orthogonal decomposition, which follows from the fact that $Q|_{\pi_i(\kg)\times\pi_i(\kg)}$ is bi-invariant.  

Conversely, assume that there exists an inner product $\ip$ on $\kg$ such that $Q|_{\kg\times\kg}=-f(Q)\ip$ for any $Q=y_1\kil_{\ggo_1}+\dots+y_s\kil_{\ggo_s}$.  Thus $f$ is a linear function which is $\geq 0$ if all the $y_i$'s are $\geq 0$, so 
$$
f(y_1,\dots,y_s)=a_1y_1+\dots+a_sy_s, \qquad \mbox{for some}\quad a_1,\dots,a_s\geq 0.  
$$
On the other hand, for any nonzero $Z\in\kg$, 
$$
-f(y_1,\dots,y_s) = \tfrac{1}{\la Z,Z\ra}\sum_{i=1}^s Q(Z_i,Z_i) = \tfrac{1}{\la Z,Z\ra}\sum_{i=1}^s y_i\kil_{\ggo_i}(Z_i,Z_i), 
$$
from which follows that $a_i=-\frac{\kil_{\ggo_i}(Z_i,Z_i)}{\la Z,Z\ra}$ for all $i=1,\dots,s$.  This implies that either $\pi_i(\kg)\simeq\kg$ (if $a_i>0$, giving rise to $G'$) or $\pi_i(\kg)=0$ (if $a_i=0$, giving rise to $H$).  Moreover, $a_i=-\frac{\kil_{\kg_j}(Z,Z)}{\la Z,Z\ra}\frac{1}{c_{ij}}$ if positive for all $Z\in\kg_j$ and any $j=1,\dots,t$, that is, $c_{ij}=\lambda_j/a_i$ for some $\lambda_j>0$.  Thus $G'/K$ is aligned, concluding the proof.  
\end{proof}

In other words, aligned homogeneous spaces are those with the richest third cohomology among the class of all compact homogeneous spaces with infinite isotropy.  It follows from \eqref{cij} and \eqref{Zk0} that if $G/K$ is aligned, then a bi-invariant  $Q=y_1\kil_{\ggo_1}+\dots+y_s\kil_{\ggo_s}$ satisfies that $Q|_{\kg\times\kg}=0$ if and only if 
\begin{equation}\label{alignedQ}
\frac{1}{c_1}y_1+\dots+\frac{1}{c_s}y_s=0, 
\end{equation} 
in which case it provides a class $[H_Q]\in H^3(G/K)$.  

\begin{example}\label{ex3}
Consider $M=\SU(n_1)\times\dots\times\SU(n_s)/\SU(k_1)\times\dots\times\SU(k_t)$, where $k_1+\dots+k_t\leq n_i$ for all $i$ and the standard block diagonal embedding is always taken.  It follows from \cite[pp.37]{DtrZll} that 
$$
(c_{1j},\dots,c_{sj}) = k_j\left(\frac{1}{n_1},\dots,\frac{1}{n_s}\right),  \qquad\forall j=1,\dots,t.
$$
Thus $G/K$ is aligned and so $b_3(G/K)=s-1$ by Corollary \ref{H3-cor}.  Note that the lower bound $s-t$ provided by Corollary \ref{H3-cor}, (i) can be much smaller.  
\end{example}

\begin{example}\label{ex4}
For $M=\SO(14)\times E_6/\SU(6)\times\SO(8)$ with the standard embeddings, it follows from \cite[pp.38]{DtrZll} that 
$$
(c_{1j},c_{2j}) = \left(\unm,\unm\right),  \qquad\forall j=1,2.
$$
Thus $G/K$ is aligned with $(c_1,c_2)=(2,2)$ and $\lambda_1=\lambda_2=\unc$, which gives $b_3(G/K)=1$ by Corollary \ref{H3-cor}.  
\end{example}

\section{On reductive decompositions}

The literature on the geometry of compact homogeneous spaces $G/K$ with $G$ simple is much larger than for $G$ non-simple.  A possible explanation for this is that the isotropy representation of $G/K$ is almost never multiplicity-free in the case when $G$ is not simple.  We study such behavior in this section, which may be considered of independent interest, by describing some natural reductive decompositions for a given homogeneous space $G/K$ with $G$ non-simple.    

Given a homogeneous space $M=G/K$ as in \S\ref{H3-sec} with decompositions of $\ggo$ and $\kg$ as in 
\eqref{decs}, we consider the $\kil_{\ggo_i}$-orthogonal reductive decomposition for the homogeneous space $G_i/\pi_i(K)$, 
\begin{equation}\label{red6}
\ggo_i=\pi_i(\kg)\oplus\qg_i, \qquad\mbox{for each}\quad i=1,\dots,s.
\end{equation}
We assume from now on that the following irreducibility condition: $\pi_i(\kg)\ne 0$ for all $i=1,\dots,s$ (cf.\ Proposition \ref{al-car}, (i)).  

Any $\Ad(K)$-invariant subspace $\widetilde{\pg}$ such that
$$
\widetilde{\kg} := \pi_1(\kg)\oplus\dots\oplus\pi_s(\kg) = \kg \oplus \widetilde{\pg}
$$
therefore determines a reductive decomposition $\ggo=\kg\oplus\pg$ for $M=G/K$ given by 
\begin{equation}\label{red1}
\pg:=\pg_1\oplus\dots\oplus\pg_s\oplus\widetilde{\pg}, \qquad \mbox{where} \quad \pg_i:=(0,\dots,0,\qg_i,0,\dots,0).  
\end{equation}

For any fixed vector $\eta=(a_1,\dots,a_s)\in\RR^s$, we consider the subspace of the Lie subalgebra $\widetilde{\kg}$ given by 
\begin{equation}\label{red7}
\ggo_\eta:=\left\{ (a_1Z_1,\dots,a_sZ_s) : Z\in\kg \right\}, 
\end{equation}
Note that $\kg=\ggo_{\eta_s}$, where $\eta_s:=(1,\dots,1)$.     

It is easy to show that in the case when $\pi_i:\kg\rightarrow\ggo_i$ is in addition injective for any $i=1,\dots,s$, the subspaces $\ggo_{\eta_1},\dots,\ggo_{\eta_k}$ are independent as soon as the subset $\{\eta_1,\dots,\eta_k\}\subset\RR^s$ is linearly independent.  Moreover, for any nonzero $\eta$,  $\dim{\ggo_\eta}=\dim{\kg}$ and $\ggo_\eta$ is $\Ad(K)$-invariant and equivalent to the adjoint representation $\kg$.  All this implies that for any basis of $\RR^s$ of the form $\{\eta_1,\dots,\eta_{s-1},\eta_s\}$, the $\Ad(K)$-invariant subspace
\begin{equation}\label{red}
\widetilde{\pg} := \ggo_{\eta_1}\oplus\dots\oplus\ggo_{\eta_{s-1}}
\end{equation}
satisfies that $ \widetilde{\kg} = \kg \oplus \widetilde{\pg}$, giving rise to a reductive decomposition $\ggo=\kg\oplus\pg$ of $M=G/K$ as in \eqref{red1}.  We will use 
$$
\eta_1:=(1,-1,0,\dots,0), \quad \eta_2:=(1,1,-2,0,\dots,0), \quad \dots \quad \eta_{s-1}:=(1,\dots,1,-(s-1)),
$$  
in the rest of this section.  

We now study the case when the projections are not necessarily all injective.  It is easy to check that 
$$
\widetilde{\kg}=\widetilde{\kg_0}\oplus\dots\oplus\widetilde{\kg_t}, \qquad\mbox{where}\quad 
\widetilde{\kg_j}:=\pi_1(\kg_j)\oplus\dots\oplus\pi_s(\kg_j), \qquad\forall j=0,\dots,t.    
$$
For each $j=1,\dots,t$, consider the nonempty subset 
$$
\{ i_1,\dots,i_{n_j}\} := \{ 1\leq i\leq s: \pi_i(\kg_j)\ne 0\}.  
$$
Any $\eta=(a_1,\dots,a_{n_j})\in\RR^{n_j}$ defines the following $\Ad(K)$-invariant subspace of $\widetilde{\kg_j}$,
$$
\ggo^j_{\eta}:=\left\{ a_1Z_{i_1}+\dots+a_{n_j}Z_{i_{n_j}} : Z\in\kg_j \right\}, 
$$
which is equivalent to $\kg_j$ as an $\Ad(K)$-representation.  Thus for the basis of $\RR^{n_j}$ given by $\{(1,\dots,1),\eta_1,\dots,\eta_{n_j-1}\}$, we have that
\begin{equation}\label{red5}
\widetilde{\kg_j} := \kg_j\oplus\ggo^j_{\eta_1}\oplus\dots\oplus\ggo^j_{\eta_{n_j-1}}.
\end{equation}
On the other hand, if $\{ Z^1,\dots,Z^{d_0}\}$ is any basis of $\kg_0$, then we proceed as above for each of the subspaces $\RR Z^k$ (instead of $\kg_j$) to obtain a decomposition $\widetilde{\kg_0} = \kg_0\oplus\pg_0$, where
\begin{equation}\label{red4}
\pg_0 := \hg^1_{\eta_1}\oplus\dots\oplus\hg^1_{\eta_{m_1-1}} \oplus \dots\oplus \hg^{d_0}_{\eta_1}\oplus\dots\oplus\hg^{d_0}_{\eta_{m_{d_0}-1}},
\end{equation}
$\hg^\alpha_{\eta}:= \RR\left(a_1Z^\alpha_{i_1}+\dots+a_{m_k}Z^\alpha_{i_{m_\alpha}}\right)$ for any $\eta=(a_1,\dots,a_{m_\alpha})\in\RR^{m_\alpha}$,
$$
\{ i_1,\dots,i_{m_\alpha}\} := \{ 1\leq i\leq s: \pi_i(Z^\alpha)\ne 0\}, \qquad \forall \alpha=1,\dots,d_0.
$$  
This provides a reductive decomposition $\ggo=\kg\oplus\pg$ of the homogeneous space $M=G/K$, where 
\begin{equation}\label{red-gen}
\pg = \pg_1\oplus\dots\oplus\pg_s \oplus \pg_0 \oplus  \ggo^1_{\eta_1}\oplus\dots\oplus\ggo^1_{\eta_{n_1-1}} \oplus \dots\oplus \ggo^t_{\eta_1}\oplus\dots\oplus\ggo^t_{\eta_{n_t-1}},
\end{equation} 
where $\pg_i$ is as in \eqref{red1} and $\pg_0$ is the $(m_1+\dots+m_{d_0}-d_0)$-dimensional subspace given in \eqref{red4}.   As $\Ad(K)$-representations, $\pg_i$ is the isotropy representation of the homogeneous space $G_i/\pi_i(K)$, each $\hg^\alpha_{\eta_i}\subset\pg_0$ is a one-dimensional trivial representation and any $\ggo^j_{\eta_i}$ is equivalent to the irreducible representation $\kg_j$.  In particular, $G/K$ is not multiplicity-free under any of the following conditions: 
\begin{enumerate}[a)]
\item $G_i/\pi_i(K)$ is not multiplicity-free.  

\item $d_0\geq 2$. 

\item $d_0=1$ and $m_\alpha\geq 3$ for some $\alpha=1,\dots,d_0$. 

\item $n_j\geq 3$ for some $j=1,\dots,t$.  
\end{enumerate}

Note that if all the projections are injective, then $n_1=\dots=n_t=s$, $m_1=\dots=m_{d_0}=s$ and each summand $\ggo_{\eta_i}$, $i=1,\dots,s-1$ in decomposition \eqref{red} can be written as
\begin{equation}\label{red3}
\ggo_{\eta_i} = \hg^1_{\eta_i}\oplus\dots\oplus\hg^{d_0}_{\eta_i}\oplus\ggo^1_{\eta_i}\oplus\dots\oplus\ggo^t_{\eta_i}.    
\end{equation}

Another natural choice of a reductive decomposition for $G/K$ is the $g_b$-orthogonal reductive decomposition $\ggo=\kg\oplus\pg$, where $g_b$ is any bi-invariant metric on $G$, in which case $\widetilde{\pg}$ is necessarily the $g_b$-orthogonal complement of $\kg$ in $\pi_1(\kg)\oplus\dots\oplus\pi_s(\kg)$.  Note that additionally, $\widetilde{\pg}$ can be decomposed as $\widetilde{\pg}=\pg_{s+1}\oplus\dots\oplus\pg_{r}$ in $g_b$-orthogonal $\Ad(K)$-invariant subspaces, producing a $g_b$-orthogonal decomposition of $\pg$, 
\begin{equation}\label{red2}
\pg = \pg_1\oplus\dots\oplus\pg_s\oplus \pg_{s+1}\oplus\dots\oplus\pg_{r},   
\end{equation}
in $\Ad(K)$-invariant subspaces.

\subsection{Aligned case} 
The above two choices of reductive decompositions coincide in the aligned case.  

\begin{proposition}\label{red-al}
Assume that $M=G/K$ is aligned with positive constants $(c_1,\dots,c_s)$ and consider a bi-invariant metric $g_b=z_1(-\kil_{\ggo_1})+\dots+z_s(-\kil_{\ggo_s})$, $z_1,\dots,z_s>0$ and the corresponding $g_b$-orthogonal reductive decomposition $\ggo=\kg\oplus\pg$.  Then the following is a $g_b$-orthogonal decomposition of $\pg$ in $\Ad(K)$-invariant subspaces: 
$$
\pg = \pg_1\oplus\dots\oplus\pg_s\oplus \pg_{s+1}\oplus\dots\oplus\pg_{2s-1},  
$$
where $\pg_i=(0,\dots,0,\qg_i,0,\dots,0)$ for all $i=1,\dots,s$ (see \eqref{red6}), 
$$
\pg_{s+1}=\ggo_{\eta_1}, \quad \dots\quad \pg_{2s-1}=\ggo_{\eta_{s-1}} \qquad\mbox{(see \eqref{red7})}
$$ 
and
$$
\eta_j:=\left(1,\dots,1,-\tfrac{c_{j+1}}{z_{j+1}}\left(\tfrac{z_1}{c_1}+\dots+\tfrac{z_j}{c_j}\right),0,\dots,0\right) \qquad 
(1\; \mbox{appears $j$-times}),  
$$
for all $j=1,\dots,s-1$.  In turn, each $\ggo_{\eta_j}$ $g_b$-orthogonally decomposes in $\Ad(K)$-irreducible invariant subspaces as 
$$
\ggo_{\eta_j} = \ggo^0_{\eta_j} \oplus\ggo^1_{\eta_j}\oplus\dots\oplus\ggo^t_{\eta_j}, 
$$ 
where $\ggo^l_\eta:=\left\{ (a_1Z_1,\dots,a_sZ_s) : Z\in\kg_l \right\}$ for any $\eta=(a_1,\dots,a_s)\in\RR^s$.   
\end{proposition}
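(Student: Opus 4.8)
The plan is to verify three things in order: first, that the listed subspaces $\pg_1,\dots,\pg_s$ and $\ggo_{\eta_1},\dots,\ggo_{\eta_{s-1}}$ are indeed $\Ad(K)$-invariant and together span a complement to $\kg$ in $\ggo$; second, that they are mutually $g_b$-orthogonal, where the only nontrivial point is the orthogonality of $\kg=\ggo_{\eta_s}$ (with $\eta_s=(1,\dots,1)$) to each $\ggo_{\eta_j}$, which is exactly what pins down the specific formula for $\eta_j$; and third, that each $\ggo_{\eta_j}$ decomposes $\Ad(K)$-orthogonally as $\ggo^0_{\eta_j}\oplus\ggo^1_{\eta_j}\oplus\dots\oplus\ggo^t_{\eta_j}$ with irreducible (or trivial, in the $\ggo^0$ piece) summands.

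For the first point, $\Ad(K)$-invariance of $\pg_i=(0,\dots,0,\qg_i,0,\dots,0)$ follows because $\qg_i$ is the $\kil_{\ggo_i}$-orthogonal complement of $\pi_i(\kg)$ in $\ggo_i$ (decomposition \eqref{red6}) and $K$ acts on $\ggo_i$ through $\pi_i(K)$. Invariance of $\ggo_\eta=\{(a_1Z_1,\dots,a_sZ_s):Z\in\kg\}$ for any $\eta$ is immediate since $\Ad(k)$ acts diagonally and commutes with scaling each slot. Because $G/K$ is aligned, every $\pi_i$ is injective (noted just after Definition \ref{aligned}), so $\dim\ggo_\eta=\dim\kg$ for each nonzero $\eta$, and the subspaces $\ggo_{\eta_1},\dots,\ggo_{\eta_{s-1}},\ggo_{\eta_s}=\kg$ are independent provided $\{\eta_1,\dots,\eta_{s-1},\eta_s\}$ is a basis of $\RR^s$; since $\eta_j$ has a nonzero entry in slot $j+1$ and zeros beyond, while $\eta_s=(1,\dots,1)$, the matrix with these rows is lower-triangular up to the last row and one checks it is nonsingular. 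Counting dimensions, $\sum_i\dim\qg_i+(s-1)\dim\kg = \dim\ggo-\dim\widetilde\kg+(s-1)\dim\kg$, and since $\widetilde\kg=\kg$ here (all $\pi_i$ injective forces $\widetilde\kg\cong\kg^{?}$—more precisely $\dim\widetilde\kg=s\dim\kg$ only if the $\pi_i(\kg)$ were independent, which they are not; rather $\widetilde\kg=\pi_1(\kg)\oplus\dots\oplus\pi_s(\kg)$ has dimension $s\dim\kg$), giving $\dim\pg=\dim\ggo-\dim\kg$ as required.

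For the orthogonality, the pairs $\pg_i\perp\pg_j$ ($i\ne j$) and $\pg_i\perp\ggo_\eta$ are clear because $\pg_i$ lives in $\ggo_i$ while $\ggo_\eta\subset\widetilde\kg$ projects into $\pi_i(\kg)\perp\qg_i$, and distinct $\ggo_i$ are $g_b$-orthogonal. The substance is computing $g_b(\ggo_{\eta_j},\ggo_{\eta_k})$ for $1\le j,k\le s$: for $Z,W\in\kg$,
$$
g_b\big((a_1Z_1,\dots,a_sZ_s),(b_1W_1,\dots,b_sW_s)\big) = \sum_{i=1}^s z_i a_i b_i(-\kil_{\ggo_i}(Z_i,W_i)) = \Big(\sum_{i=1}^s \tfrac{z_i a_i b_i}{c_i}\Big)\la Z,W\ra,
$$
using the aligned identity $\kil_{\ggo_i}(Z_i,W_i)=-\tfrac1{c_i}\la Z,W\ra$ from \eqref{bi} (valid on all of $\kg$). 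So $\ggo_{\eta_j}\perp\ggo_{\eta_k}$ iff $\sum_i \tfrac{z_i}{c_i}(\eta_j)_i(\eta_k)_i=0$, i.e.\ the $\eta$'s must be orthogonal for the weighted inner product with weights $z_i/c_i$. Taking $\eta_s=(1,\dots,1)$ and $\eta_j=(1,\dots,1,-t_j,0,\dots,0)$ ($1$ in the first $j$ slots, $-t_j$ in slot $j+1$), orthogonality of $\eta_j$ and $\eta_s$ reads $\sum_{i=1}^j\tfrac{z_i}{c_i}-t_j\tfrac{z_{j+1}}{c_{j+1}}=0$, giving precisely $t_j=\tfrac{c_{j+1}}{z_{j+1}}\big(\tfrac{z_1}{c_1}+\dots+\tfrac{z_j}{c_j}\big)$, matching the statement; and for $j<k$, orthogonality of $\eta_j,\eta_k$ holds because the $-t_j$ entry of $\eta_j$ is in slot $j+1\le k$ where $\eta_k$ has entry $1$, so $\sum_i\tfrac{z_i}{c_i}(\eta_j)_i(\eta_k)_i=\sum_{i=1}^j\tfrac{z_i}{c_i}-t_j\tfrac{z_{j+1}}{c_{j+1}}=0$ again.

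Finally, the $\Ad(K)$-irreducible refinement: $\ggo^l_{\eta_j}=\{(a_1Z_1,\dots,a_sZ_s):Z\in\kg_l\}$ is the image of $\kg_l$ under the injective intertwiner $Z\mapsto(a_1Z_1,\dots,a_sZ_s)$, hence equivalent as an $\Ad(K)$-module to $\kg_l$, which is irreducible for $l\ge1$ (simple factor) and a sum of trivial summands for $l=0$ (the center); the decomposition $\ggo_{\eta_j}=\bigoplus_{l=0}^t\ggo^l_{\eta_j}$ is just the image of $\kg=\kg_0\oplus\dots\oplus\kg_t$, and it is $g_b$-orthogonal since for $Z\in\kg_l$, $W\in\kg_m$ with $l\ne m$ one has $\kil_{\ggo_i}(Z_i,W_i)=0$ (each $\pi_i(\kg)=\bigoplus_l\pi_i(\kg_l)$ is a $\kil_{\ggo_i}$-orthogonal decomposition, as $\kil_{\ggo_i}|_{\pi_i(\kg)\times\pi_i(\kg)}$ is $\Ad(\pi_i(K))$-invariant—this is exactly the point invoked after \eqref{bi2}). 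I expect the main obstacle to be bookkeeping rather than conceptual: being careful that \eqref{bi} is being used in its ``all of $\kg$'' form (not just on a single simple factor), and confirming the dimension count so that the listed subspaces exhaust $\pg$ rather than merely sit inside it.
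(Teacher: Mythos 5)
Your proof is correct and follows essentially the same route as the paper: the heart of both arguments is the computation $g_b\bigl((a_1Z_1,\dots,a_sZ_s),(b_1W_1,\dots,b_sW_s)\bigr)=\bigl(\sum_i \tfrac{z_ia_ib_i}{c_i}\bigr)\la Z,W\ra$, which reduces orthogonality of the $\ggo_{\eta_j}$'s (including to $\kg=\ggo_{\eta_s}$) to orthogonality of the $\eta$'s in the weighted inner product with weights $z_i/c_i$, treated case-by-case on the factors of $\kg$ (simple factors via the $\lambda_j$'s, the center via Definition \ref{aligned}(ii), and cross terms via bi-invariance) exactly as in the paper. The only blemish is the self-contradictory aside in your dimension count (``$\widetilde{\kg}=\kg$ here'' is false, and the $\pi_i(\kg)$ \emph{are} independent since they sit in distinct simple ideals), but the arithmetic you land on, $\dim\ggo-s\dim\kg+(s-1)\dim\kg=\dim\pg$, is right, so nothing is lost.
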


\begin{remark}
As $\Ad(K)$-representations, $\pg_i\simeq\qg_i$, the isotropy representation of the homogeneous space $G_i/\pi_i(K)$, $\ggo_{\eta_j}\simeq\kg$ and $\ggo^j_{\eta_i}\simeq\kg_j$.  In particular, an aligned $G/K$ is never multiplicity-free if $s\geq 3$.  
\end{remark}

\begin{remark}
If $g_b$ is the standard metric $\gk$ (i.e., $z_1=\dots=z_s=1$), then each $\eta_j$ defines a closed $3$-form $H_{Q_j}$, where $Q_j$ is the linear combination of the $\kil_{\ggo_i}$'s using the coordinates of $\eta_j$ as the coeficcients $y_i$'s.  In particular, $\{ [H_{Q_1}],\dots,[H_{Q_{s-1}}]\}$ is a basis for $H^3(G/K)$ (see Theorem \ref{H3}, (iii)).   
\end{remark}

\begin{proof}
If $i<j$ and $A_{s+j}:= -\frac{c_{j+1}}{z_{j+1}}\left(\frac{z_1}{c_1}+\dots+\frac{z_j}{c_j}\right)$, then for any $Z,W\in\kg_m$, $1\leq m\leq s$,
\begin{align*}
& g_b\left((Z_1,\dots,Z_i,A_{s+i}Z_{i+1},0,\dots,0), (W_1,\dots,W_j,A_{s+j}W_{j+1},0,\dots,0)\right) \\ 
=& \sum_{l=1}^i g_b(Z_l,W_l) + A_{s+i}g_b(Z_{i+1},W_{i+1}) 
=-\sum_{l=1}^i z_l\kil_{\ggo_l}(Z_l,W_l) - A_{s+i}z_{i+1}\kil_{\ggo_{i+1}}(Z_{i+1},W_{i+1}) \\ 
=&-\kil_\kg(Z,W)\tfrac{1}{\lambda_m}\left(\sum_{l=1}^i \tfrac{z_l}{c_l} + A_{s+i}\tfrac{z_{i+1}}{c_{i+1}}\right) = 0. 
\end{align*}
The same holds for any $Z,W\in\kg_0$, which follows in much the same way by using Definition \ref{aligned}, (ii).  Finally, for $Z\in\kg_m$ and $W\in\kg_n$, $m\ne n$, we use that $\pi_i(\kg)=\pi_i(\kg_0)\oplus\dots\oplus\pi_i(\kg_t)$ is a $g_b$ orthogonal decomposition since $g_b|_{\pi_i(\kg)\times\pi_i(\kg)}$ is bi-invariant.  We conclude that $g_b(\ggo_{\eta_i},\ggo_{\eta_j})=0$ for all $i\ne j$, as was to be shown.  
\end{proof}

\section{Harmonic $3$-forms on compact homogeneous spaces}\label{harm-sec} 

Let $M=G/K$ be a homogeneous space, where $G$ is compact, connected and semisimple and $K$ is connected.  We have seen in \S\ref{H3-sec} that, given a reductive decomposition, there is a canonical closed $3$-form $H_Q$ attached to each bi-invariant symmetric bilinear form $Q$ on $\ggo$ such that $Q|_{\kg\times\kg}=0$.  For any given bi-invariant metric $g_b$ on $G$,  let us fix the $g_b$-orthogonal reductive decomposition $\ggo=\kg\oplus\pg$ and as a background $G$-invariant metric the normal metric $g_b|_{\pg\times\pg}$.   

The well-known fact that any bi-invariant $3$-form on a compact Lie group is harmonic with respect to any bi-invariant metric motivates the following natural problem:
\begin{quote}
Is any closed $3$-form $H_Q$ $g_b$-harmonic?  If not, what are the conditions in terms of algebraic invariants of the groups $G$, $K$ and the embedding $K\subset G$ to be fulfilled by $Q$ and a normal metric $g_b$ on $M=G/K$ in order for
$H_Q$ to be $g_b$-harmonic?  Can some other $G$-invariant metric $g$ (not necessarily normal) satisfy that $H_Q$ is $g$-harmonic for every $Q$?  
\end{quote}

We first note that for any bi-invariant symmetric bilinear form $Q$ on $\ggo$, 
\begin{equation}\label{Qbcc} 
\mbox{the $3$-form $\widetilde{Q}=Q([\cdot,\cdot],\cdot)\in(\Lambda^3\pg^*)^K$ is $g_b$-coclosed}.
\end{equation}
Indeed, for any $\beta\in(\Lambda^2\pg^*)^K$,  
$$
g_b(\widetilde{Q},d\beta) = g_b(\hat{\widetilde{Q}},\hat{d}\hat{\beta}) =  g_b(\overline{Q},\hat{d}\hat{\beta}) = g_b(\hat{d}_{g_b}^*\overline{Q},\hat{\beta}) =0,
$$
since $g_b$ and $\overline{Q}$ are both bi-invariant on $G$ and so $\hat{d}_{g_b}^*\overline{Q}=0$.     

For any $g_b$-orthogonal decomposition,
\begin{equation}\label{dec2}
\pg=\pg_1\oplus\dots\oplus\pg_r,
\end{equation}
in $\Ad(K)$-invariant subspaces (not necessarily irreducible), we take a $g_b$-orthonormal basis $\{ e_\alpha^i\}$ of $\pg_i$ for each $i=1,\dots,r$ and consider the corresponding structural constants 
$$
c_{i\alpha,j\beta}^{k\gamma}:= g_b([e_\alpha^i,e_\beta^j],e_\gamma^k).  
$$ 
Note that $c_{i\alpha,j\beta}^{k\gamma}=-c_{j\beta,i\alpha}^{k\gamma}=-c_{i\alpha,k\gamma}^{j\beta}$.  
Given any $G$-invariant metric $g$, there exists at least one $g_b$-orthogonal decomposition as in \eqref{dec2}, which is also $g$-orthogonal, such that $g$ has the form
\begin{equation}\label{metric}
g=x_1g_b|_{\pg_1\times\pg_1}+\dots+x_rg_b|_{\pg_r\times\pg_r}, \qquad x_i>0.
\end{equation}
This metric will be denoted by 
$$
g=(x_1,\dots,x_r)_{g_b}.
$$ 
Note that the background normal metric is $g_b=(1,\dots,1)_{g_b}$ and $\{ \frac{1}{\sqrt{x_i}}e^i_\alpha\}$ is a $g$-orthonormal basis of $\pg_i$.  

We now consider a $g_b$-orthogonal reductive decomposition as in \eqref{red2} (recall also the decompositions \eqref{decs} and \eqref{red1}), so 
$$
\pg=\overline{\pg}\oplus\widetilde{\pg}, \qquad \mbox{where}\quad \overline{\pg}:=\pg_1\oplus\dots\oplus\pg_s, \quad \widetilde{\pg}:=\pg_{s+1}\oplus\dots\oplus\pg_r.  
$$
It is easy to check that for all $1\leq i\ne j\leq s$ and $s+1\leq k,l\leq r$,  
\begin{equation}\label{pipj}
[\pg_i,\pg_i]\subset\kg+\pg_i+\widetilde{\pg}, \qquad [\pg_i,\pg_j]=0, \qquad  [\pg_{k},\pg_i]\subset\pg_i, \qquad [\pg_{k},\pg_{l}]\subset \kg+\widetilde{\pg}.
\end{equation}
We have that the bi-invariant metric is given by 
$$
g_b=z_1(-\kil_{\ggo_1})+\dots+z_s(-\kil_{\ggo_s}), \qquad \mbox{for some}\quad z_1,\dots,z_s>0, 
$$
and recall that any bi-invariant symmetric bilinear form $Q$ such that $Q|_{\kg\times\kg}=0$, say, 
$$
Q=y_1\kil_{\ggo_1}+\dots+y_s\kil_{\ggo_s}, \qquad y_1,\dots,y_s\in\RR,
$$ 
defines a closed $3$-form $H_Q$ as in \eqref{HQ}.  Since
\begin{equation}\label{pipj2}
Q(\pg_i,\pg_j)=Q(\pg_i,\pg_{k})=Q(\pg_i,\kg)=0, \qquad\forall i\ne j\leq s< k,   
\end{equation}
the $3$-form $H_Q$ is nonzero (up to permutations) only on 
$$
\pg_i\times\pg_i\times\pg_i, \qquad \pg_i\times\pg_i\times\pg_j, \qquad \pg_j\times\pg_k\times\pg_l, \qquad i\leq s < j,k,l,
$$ 
and since by \eqref{pipj2} the subscript $\pg$ can be deleted in formula \eqref{HQ} when the three vectors are in $\pg_i$, $i\leq s$, we obtain that
\begin{equation}\label{HQ1}
H_Q(X,Y,Z) = Q([X,Y],Z) = -\frac{y_i}{z_i}g_b([X,Y],Z), 
%= -\frac{y_i}{z_i}H_i(X,Y,Z), 
\qquad\forall X,Y,Z\in\pg_i, \quad i\leq s.    
\end{equation}
We now show that the subspace $(\Lambda^2\overline{\pg}^*)^K$ imposes no condition on the $g$-coclosedness of $H_Q$.  

\begin{proposition}\label{ompt1}
$g(H_Q,d\omega)=0$ for any $\omega\in(\Lambda^2\pg^*)^K$ such that $\omega(\widetilde{\pg},\cdot)=0$ and any metric $g=(x_1,\dots,x_r)_{g_b}$. 
\end{proposition}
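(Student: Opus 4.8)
The plan is to unwind the defining identity $g(d_g^*H_Q,\omega) = g(H_Q,d\omega)$ and show that every term in $g(H_Q,d\omega)$ vanishes under the hypothesis $\omega(\widetilde\pg,\cdot)=0$. Write $\omega$ as a sum of terms supported on $\pg_i\times\pg_j$ with $i,j\le s$; by \eqref{pipj} the bracket $[\pg_i,\pg_j]=0$ for $i\ne j\le s$, so the only nonzero pieces of $\omega$ are those supported on a single $\pg_i\times\pg_i$ with $i\le s$. The key computation is to expand $g(H_Q,d\omega)$ in a $g$-orthonormal basis adapted to the decomposition \eqref{red2}, using the formula for $d\omega$ from \S\ref{preli}, namely $d\omega(X,Y,Z) = -\omega([X,Y]_\pg,Z)+\omega([X,Z]_\pg,Y)-\omega([Y,Z]_\pg,X)$. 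Since $\omega$ only "sees" vectors lying in $\bigoplus_{i\le s}\pg_i$, and $[X,Z]_\pg\in\pg_i$ forces $X,Z$ to involve $\pg_i$ (by \eqref{pipj}, as $[\pg_k,\pg_i]\subset\pg_i$ and $[\pg_k,\pg_l]\subset\kg+\widetilde\pg$), the pairing $g(H_Q,d\omega)$ reduces to a sum over triples all lying in a single $\pg_i\times\pg_i\times\pg_i$, $i\le s$.

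On such a triple, \eqref{HQ1} gives $H_Q(X,Y,Z)=-\tfrac{y_i}{z_i}g_b([X,Y],Z)$, i.e., up to the scalar $-y_i/z_i$ the form $H_Q$ restricted to $\pg_i^{\otimes 3}$ is exactly the Cartan-type $3$-form $g_b([\cdot,\cdot],\cdot)$, and the metric $g$ restricted to $\pg_i$ is $x_i g_b|_{\pg_i\times\pg_i}$, a scalar multiple of a bi-invariant metric. So the entire contribution is, up to constants, $g_b\big(\,g_b([\cdot,\cdot],\cdot)\,,\,d\omega|_{\pg_i}\,\big)$ for an $\Ad(K)$-invariant $2$-form $\omega$ on $\pg_i$. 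Now I would invoke the fact that $\pg_i$ carries the $\kil_{\ggo_i}$-orthogonal complement of $\pi_i(\kg)$ in $\ggo_i$, and extend the picture to the Lie group $G_i$: the bi-invariant $3$-form $\overline{\kil_{\ggo_i}}$ on $\ggo_i$ is $\hat d_{g_b}$-coclosed (Corollary \ref{Hkgharm-simple}(ii)), hence $g_b(\overline{\kil_{\ggo_i}},\hat d\hat\omega)=g_b(\hat d^*_{g_b}\overline{\kil_{\ggo_i}},\hat\omega)=0$ for any left-invariant $2$-form; restricting to $\pg_i$ (using $g_b$-orthogonality of the reductive decomposition on $\ggo_i$ and the compatibility $\hat g(\hat\alpha,\hat\beta)=g(\alpha,\beta)$ established in \S\ref{preli}) yields $g_b\big(g_b([\cdot,\cdot]_{\pg_i},\cdot),d\omega\big)=0$ — but one must be careful: on $\pg_i\times\pg_i\times\pg_i$ we need the full bracket, not $[\cdot,\cdot]_{\pg_i}$, so the argument should instead be run entirely upstairs on $\ggo_i$ where $H_Q$ agrees with $-\tfrac{y_i}{z_i}\overline{\kil_{\ggo_i}}$ on the relevant block, and the contribution of the $\kg$-component of brackets is killed because $\omega$ is $\Ad(K)$-invariant (this is the same Cartan-formula manipulation $Q([\cdot,\cdot],Z)=\alpha([\cdot,\cdot],Z)$ on the nose as in the proof of Proposition \ref{H3}).

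The cleanest route, and the one I would actually write, is to avoid block-by-block bookkeeping: pass to $\hat H_Q=\overline Q+\hat d\alpha_Q$ and $\hat\omega=\pi^*\omega\in(\Lambda^2\ggo^*)^K$, so $g(H_Q,d\omega)=g_b(\hat H_Q,\hat d\hat\omega)=g_b(\overline Q,\hat d\hat\omega)+g_b(\hat d\alpha_Q,\hat d\hat\omega)$. The second term is $g_b(\alpha_Q,\hat d^*_{g_b}\hat d\hat\omega)$; since $\hat\omega$ is supported away from $\widetilde\pg$ and $\alpha_Q$ pairs $\pg$ with $\kg$ only, one checks this vanishes from the support conditions together with \eqref{pipj}. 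For the first term, decompose $\overline Q=\sum_i \overline{y_i\kil_{\ggo_i}}$ into left-invariant $3$-forms on the factors $G_i$; each is $\hat d_{g_b}$-coclosed on $\ggo_i$ by Corollary \ref{Hkgharm-simple}(ii), hence $g_b(\overline{y_i\kil_{\ggo_i}},\hat d\hat\omega)=g_b(\hat d^*_{g_b}\overline{y_i\kil_{\ggo_i}},\hat\omega)=0$. \textbf{The main obstacle} is the middle cross-term $g_b(\hat d\alpha_Q,\hat d\hat\omega)$: one must verify, using only $\omega(\widetilde\pg,\cdot)=0$, $\Ad(K)$-invariance, and the bracket relations \eqref{pipj}–\eqref{pipj2}, that no surviving contribution arises from the $[\pg_i,\pg_i]_\kg$ parts; this is a finite but slightly delicate index-chase, and it is the place where the hypothesis on $\omega$ is genuinely used rather than cosmetic. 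Everything else is a formal consequence of the adjointness identities and the coclosedness of bi-invariant forms already recorded in \S\ref{LG-sec}.
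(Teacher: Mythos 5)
There is a genuine gap, and it sits exactly where the paper's proof does its real work. Your reduction "the pairing reduces to triples all lying in a single $\pg_i\times\pg_i\times\pg_i$" is false: by \eqref{pipj} one has $[\pg_k,\pg_i]\subset\pg_i$ for $k>s$, $i\leq s$, so the mixed terms $H_Q(e_\alpha^k,e_\beta^i,e_\gamma^i)\,\omega([e_\alpha^k,e_\beta^i]_\pg,e_\gamma^i)$ with $e_\alpha^k\in\widetilde{\pg}$ survive the support hypothesis on $\omega$ (both arguments of $\omega$ lie in $\pg_i$), and $H_Q$ is nonzero on $\pg_i\times\pg_i\times\pg_k$. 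In your second, "cleanest" route you correctly flag this contribution as "the main obstacle" but then leave it unproved. The paper kills it by observing that $\pi_i(e_\alpha^k)=\pi_i(Z)$ for some $Z\in\kg$, so $\Ad(K)$-invariance of $\omega$ makes $\omega([e_\alpha^k,e_\beta^i]_\pg,e_\gamma^i)=\omega([Z,e_\beta^i],e_\gamma^i)$ \emph{symmetric} in $(\beta,\gamma)$, which annihilates it against the skew-symmetry of $H_Q$; without this (or an equivalent) argument the proposition is not proved.

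A second problem is your treatment of the term $g_b(\overline{Q},\hat{d}\hat{\omega})$. The proposition concerns an arbitrary metric $g=(x_1,\dots,x_r)_{g_b}$, so the relevant pairing is $\hat{g}(\hat{H}_Q,\hat{d}\hat{\omega})$ for the extension $\hat{g}$ of $g$, not $g_b(\cdot,\cdot)$; the identity $g(H_Q,d\omega)=g_b(\hat H_Q,\hat d\hat\omega)$ only holds when $g=g_b$. Consequently the adjointness step $\hat{g}(\overline{Q},\hat{d}\hat{\omega})=\hat{g}(\hat{d}^*_{\hat{g}}\overline{Q},\hat{\omega})$ does not vanish by the $g_b$-coclosedness of $\overline{Q}$: Corollary \ref{Hkgharm-simple} shows precisely that $\hat{d}^*_{\hat g}\overline{Q}\neq 0$ in general for non-bi-invariant $\hat{g}$. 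The paper avoids this by working term-by-term in a $g_b$-orthonormal basis, carrying the weights $\tfrac{1}{x_ix_jx_k}$ along: on the diagonal block the weight is the constant $\tfrac{1}{x_i^3}$ and the vanishing follows from the symmetry of $c_{i\alpha,i\beta}^{i\gamma}c_{i\alpha,i\beta}^{i\delta}$ in $(\gamma,\delta)$ against the skew-symmetry of $\omega(e_\delta^i,e_\gamma^i)$, with no appeal to global coclosedness on $G_i$. Your ingredients ($\Ad(K)$-invariance, \eqref{pipj}, \eqref{HQ1}) are the right ones, but both routes as written stop short of, or misstate, the two cancellations that constitute the proof.
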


\begin{proof}
If $H\in(\Lambda^3\pg^*)^K$ and $\omega\in(\Lambda^2\pg^*)^K$, then  
\begin{align}
g(H,d\omega) =& \sum_{\substack{\alpha,\beta,\gamma\\ i,j,k}} \tfrac{1}{x_ix_jx_k}H(e_\alpha^i,e_\beta^j,e_\gamma^k)d\omega(e_\alpha^i,e_\beta^j,e_\gamma^k) \notag\\ 
=&  \sum_{\substack{\alpha,\beta,\gamma\\ i,j,k}} \tfrac{1}{x_ix_jx_k}H(e_\alpha^i,e_\beta^j,e_\gamma^k) \left(-\omega([e_\alpha^i,e_\beta^j]_\pg,e_\gamma^k) +\omega([e_\alpha^i,e_\gamma^k]_\pg,e_\beta^j) -\omega([e_\beta^j,e_\gamma^k]_\pg,e_\alpha^i)\right) \notag\\ 
=&  -3\sum_{\substack{\alpha,\beta,\gamma\\ i,j,k}} \tfrac{1}{x_ix_jx_k}H(e_\alpha^i,e_\beta^j,e_\gamma^k)\omega([e_\alpha^i,e_\beta^j]_\pg,e_\gamma^k).  \label{Hdo1}
\end{align}
It therefore follows from \eqref{pipj} that for $\omega\in(\Lambda^2\pg^*)^K$ such that $\omega(\widetilde{\pg},\cdot)=0$, 
\begin{align*}
g(H_Q,d\omega) 
=& -3\sum_{\substack{\alpha,\beta,\gamma\\ i\leq s}} \tfrac{1}{x_i^3}H_Q(e_\alpha^i,e_\beta^i,e_\gamma^i)\omega([e_\alpha^i,e_\beta^i]_\pg,e_\gamma^i) \\ 
& -6\sum_{\substack{\alpha,\beta,\gamma\\ i\leq s<j}} \tfrac{1}{x_i^2x_j}H_Q(e_\alpha^j,e_\beta^i,e_\gamma^i)\omega([e_\alpha^j,e_\beta^i]_\pg,e_\gamma^i).
\end{align*}
The second summand always vanishes since if $\pi_i(e_\alpha^j)=\pi_i(Z)$, $Z\in\kg$, then
\begin{align*}
\omega([e_\alpha^j,e_\beta^i]_\pg,e_\gamma^i) =& \omega([\pi_i(Z),e_\beta^i],e_\gamma^i) = \omega([Z,e_\beta^i],e_\gamma^i) = \omega([Z,e_\gamma^i],e_\beta^i) \\ 
=& \omega([\pi_i(Z),e_\gamma^i],e_\beta^i) = \omega([e_\alpha^j,e_\gamma^i]_\pg,e_\beta^i), \qquad\forall i\leq s,   
\end{align*}
that is, $\omega([e_\alpha^j,e_\beta^i]_\pg,e_\gamma^i)$ is symmetric as a function of $(\beta,\gamma)$.  On the other hand, according to \eqref{HQ1}, the first summand equals  
$$
3\sum_{\substack{\alpha,\beta,\gamma\\ i\leq s}} \frac{y_i}{z_ix_i^3}c_{i\alpha,i\beta}^{i\gamma}\omega([e_\alpha^i,e_\beta^i]_\pg,e_\gamma^i) = 3\sum_{\substack{\alpha,\beta,\gamma,\delta\\ i\leq s}} \frac{y_i}{z_ix_i^3}c_{i\alpha,i\beta}^{i\gamma} c_{i\alpha,i\beta}^{i\delta}\omega(e_\delta^i,e_\gamma^i),  
$$
which vanishes since $c_{k\alpha,k\beta}^{k\gamma}c_{k\alpha,k\beta}^{k\delta}$ and $\omega(e_\delta^k,e_\gamma^k)$ are respectively symmetric and skew-symmetric as functions of $(\delta,\gamma)$.
\end{proof} 

For each $i=1,\dots,s$, we consider a decomposition of the isotropy representation of $G_i/\pi_i(K)$, 
$$
\qg_i=\qg_i^0\oplus\qg_i^1\oplus\dots\oplus\qg_i^{r_i},
$$
in $\Ad(K)$-invariant subspaces, where $\qg_i^0$ is the trivial representation (i.e., $[\kg,\qg_i^0]=0$) and $\qg_i^k$ is irreducible and non-trivial for all $k=1,\dots,r_i$.   

If we assume that the following two conditions hold:
\begin{enumerate}[{\rm (i)}]
\item $\qg_i^k$ and $\kg_j$ are not equivalent as $K$-representations for all $i=1,\dots,s$, $k=1,\dots,r_i$ and $j=1,\dots,t$;  

\item either $\qg_i^0=0$ for all $i=1,\dots,s$ or $\kg_0=0$,  
\end{enumerate} 
then the above proposition implies that the conditions on $Q$ and $g=(x_1,\dots,x_r)_{g_b}$ in order for $H_Q$ to be $g$-coclosed will only come out from the linear system of equations given by 
\begin{equation}\label{h}
g(H_Q,d\omega)=0, \qquad\forall \omega\in(\Lambda^2\pg^*)^K, \quad \omega(\overline{\pg},\cdot)=0.
\end{equation}  
Indeed, conditions (i) and (ii) above imply that $\omega(\overline{\pg},\widetilde{\pg})=0$ for any $\omega\in(\Lambda^2\pg^*)^K$ since any two different isotypic components are necessarily $\omega$-orthogonal.  

The following are examples of spaces which can not appear as a $G_i/\pi_i(K)$ in order for $G/K$ to satisfy condition (i) above (they were generously provided to the authors by C. B\"ohm and W. Ziller, respectively).  

\begin{example}
Assume that $G_i$ contains a subgroup of the form $H\times H$ and consider $G_i/K$, where $K:=\Delta H\subset H\times H\subset G$.  Thus the adjoint representation $\kg\simeq\hg$ also appears in $\qg_i$.  
\end{example}

\begin{example}
If $G_i/\pi_i(K)=\SO(n)/\SO(3)$, then the $3$-dimensional irreducible factors in the standard reductive complement $\qg_i$ are necessarily equivalent to the adjoint representation $\kg=\sog(3)$ .  
\end{example}

\subsection{Case $s=2$}\label{s2-sec}
In the case when $\ggo$ has only two simple factors, we have that $\pg=\pg_1\oplus\pg_2\oplus\pg_3$, where, as an $\Ad(K)$-representation, $\pg_3$ is equivalent to an ideal $\hg$ of $\kg$, say via an equivalence map $\vp:\hg\rightarrow\pg_3$.  This implies that any $\omega\in(\Lambda^2\pg_3^*)^K$ vanishes on $\vp([\hg,\hg])\cap\pg_3$ and so $d\omega=0$.  It follows from Proposition \ref{ompt1} that $H_Q$ is $g$-harmonic for any $g=(x_1,x_2,x_3)_{g_b}$ and any $g_b$, provided that conditions (i) and (ii) above hold.  Note that if in addition $\qg_1$ and $\qg_2$ are $\Ad(K)$-irreducible and $\kg$ is either simple or one-dimensional, then any $G$-invariant metric is of the form $g=(x_1,x_2,x_3)_{g_b}$.

\section{Aligned case}\label{harmal-sec}

In this section, we continue the study of the problem stated at the beginning of \S\ref{harm-sec}, under the assumption that $M=G/K$ is aligned (see Definition \ref{aligned}).  We therefore need to focus on condition \eqref{h}.  

We consider the $g_b$-orthogonal reductive decomposition $\ggo=\kg\oplus\pg$ provided by Proposition \ref{red-al}, which is given by   
$$
\pg =\overline{\pg}\oplus \widetilde{\pg}, \qquad \mbox{where}\quad \overline{\pg}:=\pg_1\oplus\dots\oplus\pg_s, \quad \widetilde{\pg}:=\ggo_{\eta_1}\oplus\dots\oplus\ggo_{\eta_{s-1}}.
$$
Recall that all these subspaces are pairwise $g_b$-orthogonal.  It is easy to see that in addition to the Lie bracket properties given in \eqref{pipj}, we have that 
\begin{equation}\label{getai}
\begin{array}{l}
[\pg_i,\pg_i]\subset \pg_i+\ggo_{\eta_{i-1}}+\dots+\ggo_{\eta_{s-1}}+\kg, \qquad\forall i\leq s, 
 \\ \\  
 
[\ggo_{\eta_i},\ggo_{\eta_i}] \subset\ggo_{\eta_i}+\dots+\ggo_{\eta_{s-1}}+\kg, \qquad\forall i\leq s, 
 \qquad [\ggo_{\eta_i},\ggo_{\eta_j}]\subset\ggo_{\eta_i}, \qquad\forall i<j\leq s.
\end{array}
\end{equation}
Recall from \eqref{alignedQ} that, in order to define a closed $3$-form $H_Q$, a bi-invariant symmetric bilinear form $Q=y_1\kil_{\ggo_1}+\dots+y_s\kil_{\ggo_s}$ must satisfy 
$$
\tfrac{y_1}{c_1}+\dots+\tfrac{y_s}{c_s} = 0.
$$ 
The following notation will be strongly used from now on without any further reference: for $j=1,\dots,s-1$, 
$$
\begin{array}{lcl}
A_{s+j}:=-\frac{c_{j+1}}{z_{j+1}}\left(\frac{z_1}{c_1}+\dots+\frac{z_j}{c_j}\right), &\qquad& B_{s+j}:=\frac{z_1}{c_1}+\dots+\frac{z_j}{c_j}+A_{s+j}^2\frac{z_{j+1}}{c_{j+1}}, \\  \\ 
B_{2s}:=\frac{z_1}{c_1}+\dots+\frac{z_s}{c_s}, &\qquad& C_{s+j}:=\frac{y_1}{c_1}+\dots+\frac{y_j}{c_j}+A_{s+j}\frac{y_{j+1}}{c_{j+1}},  \\ \\
D_{s+j}:=\frac{z_1}{c_1}+\dots+\frac{z_j}{c_j}+A_{s+j}^3\frac{z_{j+1}}{c_{j+1}},&\qquad& 
D_{2s}:=B_{2s}, \quad C_{2s}:=0.
%E_{s+j}:=\frac{y_1}{c_1}+\dots+\frac{y_j}{c_j}+A_{s+j}^2\frac{y_{j+1}}{c_{j+1}}, 
\end{array}
$$ 
We consider the inner product $\ip=-\kil_{\ggo}|_{\kg\times\kg}$ on $\kg$ (cf.\ Proposition \ref{al-car}, (iii)) together with a $\ip$-orthonormal adapted basis $\{ Z^1,\dots,Z^{\dim{\kg}}\}$ of $\kg=\kg_0\oplus\kg_1\oplus\dots\oplus\kg_t$.  It follows from \eqref{bi} that 
\begin{equation}\label{gbi}
g_b(Z^\alpha_i,Z^\beta_i) = \delta_{\alpha\beta}\frac{z_i}{c_i}, \qquad\forall Z^\alpha,Z^\beta\in\kg, 
\qquad
g_b(Z_i,W_i) = \frac{z_i}{c_i}\la Z,W\ra, \qquad\forall Z,W\in\kg, 
\end{equation}
and from \eqref{bi2} that
\begin{equation}\label{gbi2}
g_b(Z,W) = B_{2s}\la Z,W\ra, \qquad\forall Z,W\in\kg.  
\end{equation}
If $\vp_i:\kg\rightarrow\ggo_{\eta_i}$ is the $\Ad(K)$-equivariant isomorphism given by 
$$
\vp_i(Z):=\left(Z_1,\dots,Z_i,A_{s+i}Z_{i+1},0,\dots,0\right), \qquad\forall Z\in\kg, \quad i=1,\dots,s,  
$$
(note that $\vp_s:\kg\rightarrow\ggo_{\eta_s}=\kg$ is the identity map) and we set 
$$
e_\alpha^{s+i}:=\tfrac{1}{\sqrt{B_{s+i}}}\vp_i(Z^\alpha), 
$$ 
then it follows from \eqref{gbi} that $\{e_\alpha^{s+i}:\alpha=1,\dots,\dim{\kg}\}$ is a $g_b$-orthonormal basis of 
$\ggo_{\eta_i}$ for $1\leq i \leq s$ (note that $e_\alpha^{2s}=\tfrac{1}{\sqrt{B_{2s}}}Z^\alpha$) and so
\begin{equation}\label{basis}
\{e_\alpha^{s+i}:i=1,\dots,s, \; \alpha=1,\dots,\dim{\kg}\}
\end{equation}
is a $g_b$-orthonormal adapted basis of the subalgebra $\widetilde{\kg}=\widetilde{\pg}\oplus\kg=\ggo_{\eta_1}\oplus\dots\oplus\ggo_{\eta_{s-1}}\oplus\ggo_{\eta_{s}}$.  We also take a $g_b$-orthonormal adapted basis 
\begin{equation}\label{basis2}
\{e_\alpha^i:i=1,\dots,s, \; \alpha=1,\dots,\dim{\pg_i}\}
\end{equation} 
of $\overline{\pg}$.   

\begin{lemma}\label{ompt2}
\quad  
\begin{enumerate}[{\rm (i)}] 
\item The nonzero structural constants of the basis $\{ e^i_\alpha\}$ of $\ggo$ given in \eqref{basis} and \eqref{basis2} other than $c_{i\alpha,i\beta}^{i\gamma}$, $i\leq s$, are given by  
$$
c_{i\alpha,i\beta}^{j\gamma} =\left\{\begin{array}{ll} 
\tfrac{\sqrt{B_{2s}}}{\sqrt{B_j}}c_{i\alpha,i\beta}^{2s\gamma}, & i\leq s, \; i+s\leq j< 2s, \\ \\ 
\tfrac{A_{i+s-1}\sqrt{B_{2s}}}{\sqrt{B_j}}c_{i\alpha,i\beta}^{2s\gamma}, & i\leq s,\; j=i+s-1,  \\ \\ 
0, & i\leq s<j< i+s-1,  
\end{array}\right. 
c_{i\alpha,i\beta}^{j\gamma} =\left\{\begin{array}{ll} 
\frac{1}{\sqrt{B_j}}\overline{c}_{\alpha\beta}^\gamma, & s<i<j\leq 2s, \\ \\ 
\frac{D_i}{\sqrt{B_i^3}} \overline{c}_{\alpha\beta}^\gamma, & s<i=j\leq 2s,  
%\\ \\ \frac{B_{2s-1}}{B_i} \overline{c}_{\alpha\beta}^\gamma, & s<i\leq j=2s,
\end{array}\right.
$$
where $\overline{c}_{\alpha\beta}^\gamma$ are the structural constants of $\kg$ with respect to the basis $\{ Z^1,\dots,Z^{\dim{\kg}}\}$.  

\item $Q(e_\alpha^{2s},e_\beta^j) = \delta_{\alpha\beta} \tfrac{-C_j}{\sqrt{B_jB_{2s}}}$, for all $j=s+1,\dots,2s-1$.  
\end{enumerate}
\end{lemma}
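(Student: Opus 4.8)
\emph{Proof plan.} The plan is to prove both parts by direct componentwise computation in $\ggo = \ggo_1\oplus\dots\oplus\ggo_s$, using that the Lie bracket acts componentwise, that $g_b$ and $Q$ are block-diagonal with respect to this decomposition, and the orthogonality-type identities $g_b(Z^\alpha_l, Z^\beta_m) = \delta_{lm}\delta_{\alpha\beta}\tfrac{z_l}{c_l}$ (from \eqref{gbi} and the $g_b$-orthogonality of the $\ggo_l$) and $\kil_{\ggo_l}(Z^\alpha_l, Z^\beta_l) = -\delta_{\alpha\beta}\tfrac{1}{c_l}$ (from Definition \ref{aligned}). I keep in mind throughout that $e^{s+i}_\alpha = \tfrac{1}{\sqrt{B_{s+i}}}\vp_i(Z^\alpha)$ with $\vp_i(Z) = (Z_1,\dots,Z_i,A_{s+i}Z_{i+1},0,\dots,0)$, that $e^{2s}_\alpha = \tfrac{1}{\sqrt{B_{2s}}}Z^\alpha$, and that $e^i_\alpha\in\pg_i = (0,\dots,0,\qg_i,0,\dots,0)$ for $i\le s$.

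For the structural constants with both lower indices in $\widetilde\kg$ (second line of part (i)), I would use $[Z^\alpha,Z^\beta] = \sum_\gamma\overline{c}_{\alpha\beta}^\gamma Z^\gamma$ together with the fact that each $\pi_l$ is a Lie homomorphism to get $[\vp_i(Z^\alpha),\vp_i(Z^\beta)] = \sum_\gamma\overline{c}_{\alpha\beta}^\gamma\, w^\gamma_i$, where $w^\gamma_i := (Z^\gamma_1,\dots,Z^\gamma_i,A_{s+i}^2 Z^\gamma_{i+1},0,\dots,0)$. Pairing against $e^{s+j}_\delta = \tfrac{1}{\sqrt{B_{s+j}}}\vp_j(Z^\delta)$ with $g_b$, the orthogonality identity kills every term with $\gamma\ne\delta$, and the surviving scalar $g_b(w^\gamma_i,\vp_j(Z^\gamma))$ is a short finite sum of the numbers $\tfrac{z_l}{c_l}$, possibly multiplied by powers of $A_{s+i}$, which after substituting the definition of $A_{s+i}$ collapses to $0$ if $j<i$, to $D_{s+i}$ if $j=i$, and to $B_{s+i}$ if $j>i$; dividing by the normalizing constants produces exactly the three-case formula.

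For the constants with both lower indices in $\pg_i$, $i\le s$ (first line of part (i)), I would decompose $[e^i_\alpha,e^i_\beta]$, which lies in $\ggo_i$, into its $\qg_i$-part (accounting for the excluded constants $c_{i\alpha,i\beta}^{i\gamma}$, since $e^i_\gamma\in\pg_i\simeq\qg_i$) and its $\pi_i(\kg)$-part, which I write as $(0,\dots,0,X_i,0,\dots,0)$. Pairing against $e^j_\gamma = \tfrac{1}{\sqrt{B_j}}\vp_{j-s}(Z^\gamma)$ for $j>s$, only the $i$-th component survives, giving $\tfrac{1}{\sqrt{B_j}}g_b\!\left(X_i,(\vp_{j-s}(Z^\gamma))_i\right)$; and the $i$-th component of $\vp_{j-s}(Z^\gamma)$ is $Z^\gamma_i$ if $i\le j-s$, is $A_{i+s-1}Z^\gamma_i$ if $j=i+s-1$, and is $0$ if $j<i+s-1$. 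Comparing with $c_{i\alpha,i\beta}^{2s\gamma} = \tfrac{1}{\sqrt{B_{2s}}}g_b(X_i,Z^\gamma_i)$ yields the stated formulas. The remaining nonzero structural constants are reduced to one of these two shapes by the antisymmetries $c_{i\alpha,j\beta}^{k\gamma} = -c_{j\beta,i\alpha}^{k\gamma} = -c_{i\alpha,k\gamma}^{j\beta}$ and the bracket inclusions \eqref{pipj}, \eqref{getai}.

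Part (ii) is a one-line expansion: $Q(e^{2s}_\alpha,e^j_\beta) = \tfrac{1}{\sqrt{B_jB_{2s}}}\sum_l y_l\,\kil_{\ggo_l}\!\left(Z^\alpha_l,(\vp_{j-s}(Z^\beta))_l\right)$, where the $l$-th term equals $-\delta_{\alpha\beta}\tfrac{y_l}{c_l}$ for $l\le j-s$, equals $-\delta_{\alpha\beta}A_j\tfrac{y_{j-s+1}}{c_{j-s+1}}$ for $l=j-s+1$, and vanishes otherwise, so the sum is $-\delta_{\alpha\beta}C_j$ by the definition of $C_j$. The computations are entirely routine; the only thing that needs care is the bookkeeping of the index shift between the subspaces $\ggo_{\eta_1},\dots,\ggo_{\eta_{s-1}},\ggo_{\eta_s}=\kg$ (labelled $s+1,\dots,2s$) and the maps $\vp_1,\dots,\vp_s$ and vectors $\eta_1,\dots,\eta_s$, and checking that the vanishing cases are consistent with the bracket inclusions in \eqref{pipj} and \eqref{getai} --- which is exactly the consistency that forces the telescoping cancellations using the explicit value of $A_{s+i}$. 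I do not foresee any genuine obstacle.
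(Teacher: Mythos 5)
Your proposal is correct and follows essentially the same route as the paper: componentwise computation of the brackets using that each $\pi_l$ is a Lie algebra homomorphism, block-diagonality of $g_b$ and $Q$, and the alignment identities $g_b(Z^\alpha_l,Z^\beta_l)=\delta_{\alpha\beta}\tfrac{z_l}{c_l}$, with the definition of $A_{s+j}$ producing the telescoping that yields $0$, $D_{s+i}$ or $B_{s+i}$ in the respective cases. The claimed values of $g_b(w^\gamma_i,\vp_j(Z^\gamma))$ and the index bookkeeping for the $i$-th component of $\vp_{j-s}(Z^\gamma)$ both check out, so the plan fills in to a complete proof matching the paper's.
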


\begin{proof}
For $i\leq s<j< 2s$ we have that,  
\begin{align*}
c_{i\alpha,i\beta}^{j\gamma} 
=& \tfrac{1}{\sqrt{B_j}} g_b((0,\dots,0,[e_\alpha^i,e_\beta^i],0,\dots,0), (Z^\gamma_1,\dots,Z^\gamma_{j-s},A_jZ^\gamma_{j-s+1},0\dots,0)),
\end{align*}
and thus the formula in part (i) follows from the fact that 
$$
g_b([e_\alpha^i,e_\beta^i],Z^\gamma_i) = g_b([e_\alpha^i,e_\beta^i],Z^\gamma) = \sqrt{B_{2s}} g_b([e_\alpha^i,e_\beta^i],e^{2s}_\gamma) = \sqrt{B_{2s}}c_{i\alpha,i\beta}^{2s\gamma}.  
$$
Using \eqref{gbi}, for $s<i<j\leq 2s$ we obtain,  
\begin{align*}
c_{i\alpha,i\beta}^{j\gamma} =& g_b([e_\alpha^i,e_\beta^i],e_\gamma^j) 
= \tfrac{1}{B_i\sqrt{B_j}} g_b([\vp_{i-s}(Z^\alpha),\vp_{i-s}(Z^\beta)],\vp_{j-s}(Z^\gamma))  \\ 
=& \tfrac{1}{B_i\sqrt{B_j}} g_b\left(([Z^\alpha_1,Z^\beta_1],\dots,[Z^\alpha_{i-s},Z^\beta_{i-s}]),A_i^2[Z^\alpha_{i-s+1},Z^\beta_{i-s+1}],0\dots,0),\vp_{j-s}(Z^\gamma)\right)  \\
=& \tfrac{1}{B_i\sqrt{B_j}} \left(\sum_{l=1}^{i-s} g_b([Z^\alpha,Z^\beta]_l,Z^\gamma_l)) +A_i^2g_b([Z^\alpha,Z^\beta]_{i-s+1},Z^\gamma_{i-s+1})\right)  \\
=& \tfrac{1}{B_i\sqrt{B_j}} \left(\sum_{l=1}^{i-s} \frac{z_l}{c_l} +A_i^2\frac{z_{i-s+1}}{c_{i-s+1}}\right)\la[Z^\alpha,Z^\beta],Z^\gamma\ra = \tfrac{1}{\sqrt{B_j}}\overline{c}_{\alpha\beta}^\gamma. 
\end{align*}
In much the same way, if $s<i=j\leq 2s$ then
$$
c_{i\alpha,i\beta}^{i\gamma} = \tfrac{1}{B_i\sqrt{B_i}} \left(\sum_{l=1}^{i-s} \frac{z_l}{c_l} +A_i^3\frac{z_{i-s+1}}{c_{i-s+1}}\right)\la[Z^\alpha,Z^\beta],Z^\gamma\ra = \tfrac{D_i}{\sqrt{B_i^3}}\overline{c}_{\alpha\beta}^\gamma.
$$
Finally, part (ii) follows from the following computation:
\begin{align*}
Q(e_\alpha^{2s},e_\beta^j) =& \tfrac{1}{\sqrt{B_{2s}B_j}}Q(Z^\alpha,\vp_{j-s}(Z^\beta)) \\ 
=& \tfrac{1}{\sqrt{B_{2s}B_j}}\left(\sum_{l=1}^{j-s}y_l\kil_{\ggo_l}(Z^\alpha_l,Z^\beta_l) + y_{j-s+1}A_{j}\kil_{\ggo_{j-s+1}}(Z^\alpha_{j-s+1},Z^\beta_{j-s+1})\right) \\
=& \tfrac{1}{\sqrt{B_{2s}B_j}}\left(\sum_{l=1}^{j-s}y_l\delta_{\alpha\beta}\frac{-1}{c_l}+ y_{j-s+1}A_{j}\delta_{\alpha\beta}\frac{-1}{c_{j-s+1}}\right) 
=\delta_{\alpha\beta} \tfrac{-C_j}{\sqrt{B_{2s}B_j}}, 
\end{align*}
concluding the proof.  
\end{proof} 

According to Proposition \ref{red-al}, if we set 
$$
\widetilde{\pg}_0:=\ggo_{\eta_1}^0+\dots+\ggo_{\eta_{s-1}}^0, \qquad \widetilde{\pg}_{\geq 1}:=\sum_{j=1}^t \ggo_{\eta_1}^j+\dots+\ggo_{\eta_{s-1}}^j,
$$
then $\widetilde{\pg}=\widetilde{\pg}_0\oplus\widetilde{\pg}_{\geq 1}$ and it is easy to check that
\begin{equation}\label{ptpt}
[\widetilde{\pg},\widetilde{\pg}]\subset\widetilde{\pg}_{\geq 1}+\kg.
\end{equation}

\begin{lemma}\label{ompt3} \hspace{1cm}
\begin{enumerate}[{\rm (i)}]
\item Any skew-symmetric $(s-1)\times(s-1)$ matrix $[\omega_{ij}]$, $s<i,j\leq 2s-1$, defines a $2$-form $\omega\in(\Lambda^2\pg^*)^K$ such that $\omega(\overline{\pg},\cdot)=0$ and which on $\widetilde{\pg}\times\widetilde{\pg}$ is given by,
$$
\omega(\vp_{i-s}(Z),\vp_{j-s}(W)) := \omega_{ij}g_b(Z,W), \qquad\forall Z,W\in\kg, \quad s< i,j\leq 2s-1.
$$

\item The above $2$-form satisfies that  
$$
\omega(e_\alpha^i,e_\beta^j) = \delta_{\alpha\beta} \omega_{ij} \tfrac{B_{2s}}{\sqrt{B_iB_j}}, \qquad \forall s< i,j\leq 2s-1.  
$$
\item 
Any $\omega\in(\Lambda^2\pg^*)^K$ such that $\omega(\overline{\pg},\cdot)=0$ is of the form $\omega=\omega_1+\omega_2$, where $\omega_1$ is as above, $\omega_2(\widetilde{\pg}_{\geq 1},\cdot)=0$ and $d\omega_2=0$.  
\end{enumerate}  
\end{lemma}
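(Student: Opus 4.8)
The plan is to handle parts (i) and (ii) as short direct verifications, and to put the work into part (iii), which is a Schur-type decomposition of the $\Ad(K)$-invariant $2$-forms supported on $\widetilde{\pg}$.

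For (i): the maps $\vp_i:\kg\to\ggo_{\eta_i}$ ($1\le i\le s-1$) are $\Ad(K)$-equivariant linear isomorphisms and $\widetilde{\pg}=\ggo_{\eta_1}\oplus\dots\oplus\ggo_{\eta_{s-1}}$, so prescribing $\omega$ on the pairs $(\vp_{i-s}(Z),\vp_{j-s}(W))$ and declaring $\omega(\overline{\pg},\cdot)=0$ determines a bilinear form on $\pg$; it is alternating since $[\omega_{ij}]$ is skew while $g_b$ is symmetric, and $\Ad(K)$-invariant since the $\vp_{i-s}$ are equivariant, $g_b$ is $\Ad(K)$-invariant, and $\Ad(K)$ preserves $\overline{\pg}\oplus\widetilde{\pg}$ and each $\pg_i$. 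For (ii): I would insert $e_\alpha^{s+i}=\tfrac{1}{\sqrt{B_{s+i}}}\vp_i(Z^\alpha)$ into the formula of (i), expand by bilinearity, and use $g_b(Z^\alpha,Z^\beta)=B_{2s}\langle Z^\alpha,Z^\beta\rangle=B_{2s}\delta_{\alpha\beta}$ from \eqref{gbi2} together with the $\langle\cdot,\cdot\rangle$-orthonormality of $\{Z^\alpha\}$.

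For (iii): since $\omega(\overline{\pg},\cdot)=0$, $\omega$ is determined by its restriction to $\widetilde{\pg}\times\widetilde{\pg}$, so I would study $(\Lambda^2\widetilde{\pg}^*)^K$ via $\widetilde{\pg}=\widetilde{\pg}_0\oplus\widetilde{\pg}_{\geq 1}$. Here $\widetilde{\pg}_0$ is a sum of trivial $\Ad(K)$-modules (as $\kg_0$ is central in $\kg$), while $\widetilde{\pg}_{\geq 1}=\bigoplus_{l=1}^t\bigoplus_{i=1}^{s-1}\ggo_{\eta_i}^l$ with each $\ggo_{\eta_i}^l\cong\kg_l$ the irreducible, non-trivial adjoint representation of the $l$-th simple factor. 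By Schur's lemma $\omega$ has no component pairing $\widetilde{\pg}_0$ with $\widetilde{\pg}_{\geq 1}$ and its restriction to $\widetilde{\pg}_{\geq 1}$ is block-diagonal in $l$; on each $\ggo_{\eta_i}^l\times\ggo_{\eta_j}^l$ the only invariant bilinear form is, up to scale, the Killing form of $\kg_l$ — hence a multiple of $g_b$ carried over by the $\vp$'s — and $\Lambda^2$ of the adjoint module has no invariants. Assembling these scalars into a skew matrix and feeding it to the construction of (i) yields a $2$-form $\omega_1$ of the asserted shape that accounts for the restriction of $\omega$ to $\widetilde{\pg}_{\geq 1}$; then $\omega_2:=\omega-\omega_1$ satisfies $\omega_2(\widetilde{\pg}_{\geq 1},\cdot)=0$ and $\omega_2(\overline{\pg},\cdot)=0$, so $\omega_2$ is supported on $\widetilde{\pg}_0$.

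The remaining point, $d\omega_2=0$, is the crux. Using $[\widetilde{\pg},\widetilde{\pg}]\subset\widetilde{\pg}_{\geq 1}+\kg$ from \eqref{ptpt}, together with $[\overline{\pg},\widetilde{\pg}]\subset\overline{\pg}$ and $[\kg,\widetilde{\pg}_0]=0$ from \eqref{pipj} and \eqref{getai}, and $[\widetilde{\pg}_0,\widetilde{\pg}_0]=0$ (because $\kg_0$ is abelian and the $\pi_i$ are homomorphisms), I would show that for $X,Y,Z\in\pg$ every term of $d\omega_2(X,Y,Z)$ vanishes except possibly the one coming from a pair $X,Y$ lying in a single $\pg_i$ whose bracket has a component in $\widetilde{\pg}_0$. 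Thus $d\omega_2=0$ reduces to checking that $\omega_2$ annihilates the projection of $[\pg_i,\pg_i]$ to $\widetilde{\pg}_0$ paired with $\widetilde{\pg}_0$ — equivalently, that the portion of $\omega$ on $\widetilde{\pg}_0$ reached by $[\pg_i,\pg_i]$ has already been absorbed into $\omega_1$. I expect this to come out of the Jacobi identity on $\pg_i$ and the aligned hypothesis, in particular from the explicit coefficients $A_{s+j}$ in the $\eta_j$ which couple the $\widetilde{\pg}_{\geq 1}$-blocks to $\widetilde{\pg}_0$; this coupling is the step I would regard as the main obstacle. Once it is settled, $\omega=\omega_1+\omega_2$ with $d\omega_2=0$ is the desired decomposition.
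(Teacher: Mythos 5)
Your treatment of parts (i) and (ii) is essentially the paper's: $\Ad(K)$-invariance from the equivariance of the $\vp_{i-s}$'s, and the normalization in (ii) from \eqref{gbi2}. Your Schur-lemma analysis of $(\Lambda^2\widetilde{\pg}^*)^K$ in (iii) is also the paper's route. The proposal is nonetheless incomplete, precisely at the step you yourself flag: you never prove $d\omega_2=0$, you only say you ``expect'' it to follow from the Jacobi identity and the aligned hypothesis. Your reduction of the problem is correct: \eqref{ptpt} kills the terms with two arguments in $\widetilde{\pg}$, and $[\pg_k,\pg_i]\subset\pg_i$ together with $\omega_2(\overline{\pg},\cdot)=0$ kills two of the three terms when $X,Y\in\pg_i$ and $Z\in\widetilde{\pg}$; what survives is $-\omega_2([X,Y]_{\widetilde{\pg}_0},Z)$ with $Z\in\widetilde{\pg}_0$. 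But the $\widetilde{\pg}_0$-component of $[\pg_i,\pg_i]$ is in general nonzero: it is controlled by $\kil_{\ggo_i}([X,Y],\pi_i(Z))=\kil_{\ggo_i}(Y,[\pi_i(Z),X])$ for $Z\in\kg_0$, which need not vanish even when $\qg_i^0=0$. Since the $K$-action on $\widetilde{\pg}_0$ is trivial, $\omega_2|_{\widetilde{\pg}_0\times\widetilde{\pg}_0}$ is an essentially arbitrary alternating form, so there is no invariance argument forcing it to annihilate that component, and neither the Jacobi identity nor the explicit coefficients $A_{s+j}$ obviously supply one. For what it is worth, the paper itself disposes of this point with the single sentence that closedness of $\omega_2$ ``is a consequence of \eqref{ptpt}'', which covers only the case you already handled; so your instinct that this is the crux is sound, but a proposal that stops at the crux is not a proof, and you should not expect the missing argument to be a routine computation.

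There is a second, smaller gap in your (iii). When you ``assemble these scalars into a skew matrix'', the Schur scalar on $\ggo_{\eta_i}^l\times\ggo_{\eta_j}^l$ a priori depends on the simple factor $l$ as well as on the pair $(i,j)$: the blocks for distinct $l$ are pairwise inequivalent $K$-modules, so nothing identifies their scalars. A single matrix $[\omega_{ij}]$ as in part (i) reproduces $\omega|_{\widetilde{\pg}_{\geq 1}\times\widetilde{\pg}_{\geq 1}}$ --- equivalently, yields $\omega_2(\widetilde{\pg}_{\geq 1},\cdot)=0$ --- only if those scalars are independent of $l$. You need to either prove that independence or enlarge the family of $\omega_1$'s; as written this step is asserted rather than argued (the paper is equally terse here).
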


\begin{proof}
It follows from the $\Ad(K)$-equivariance of each $\vp_{i-s}$ that for any $U,Z,W\in\kg$, 
\begin{align*}
\omega([U,\vp_{i-s}(Z)],\vp_{j-s}(W)) =& \omega(\vp_{i-s}([U,Z]),\vp_{j-s}(W)) = \omega_{ij}g_b([U,Z],W) \\ 
=& -\omega_{ij}g_b(Z,[U,W]) = -\omega(\vp_{i-s}(Z),\vp_{j-s}([U,W])) \\ 
=& -\omega(\vp_{i-s}(Z),[U,\vp_{j-s}(W)]),
\end{align*}
which shows that $\omega$ is $\Ad(K)$-invariant and so part (i) holds.  

Part (ii) follows from \eqref{gbi2} and part (iii) from the fact that $(\Lambda^2\ggo_{\eta_i}^j)^K=0$ for all $i$ and $j\geq 1$ (recall that the $K$-representation $\ggo_{\eta_i}^j$ is equivalent to the adjoint representation of the simple Lie algebra $\kg_j$) and so $(\Lambda^2(\ggo_{\eta_i}^1+\dots+\ggo_{\eta_i}^j))^K=0$, as they are pairwise non-equivalent as $K$-representations.  Note that the closedness of $\omega_2$ is a consequence of \eqref{ptpt}, concluding the proof.  
\end{proof}

We consider the Casimir operator of the $\kg$-representation $\pg_i$ relative to the bi-invariant inner product $\ip=-\kil_{\ggo}|_{\kg\times\kg} = \frac{1}{B_{2s}}g_b|_{\kg\times\kg}$ on $\kg$, given by 
$$
\cas_{\pg_i,\ip} = -\sum_{\alpha=1}^{\dim{\pg_i}} (\ad{Z^\alpha}|_{\pg_i})^2, \qquad i=1,\dots,s,
$$
and the Casimir operator $\cas_{\ad{\kg},\ip}$ of the adjoint representation.  Recall that $\pg_i$ is equivalent to the isotropy representation $\qg_i$ of the homogeneous space $G_i/\pi_i(K)$ (see \eqref{red6}).  The following algebraic invariants of $G/K$ will appear in the computations below:
\begin{align}
Cas_i:= & \tr{\cas_{\pg_i,\ip}} = B_{2s}\sum\limits_{\alpha,\beta,\gamma} (c_{i\alpha,i\beta}^{2s\gamma})^2, \qquad 1\leq i\leq s, \label{casi-def} \\
Cas_0 :=& \tr{\cas_{\ad{\kg},\ip}}=\sum\limits_{\alpha,\beta,\gamma} (\overline{c}_{\alpha\beta}^\gamma)^2 = \lambda_1\dim{\kg_1}+\dots+\lambda_t\dim{\kg_t}. \notag
\end{align}
We are using that $\kil_{\kg_j}=\lambda_j\kil_{\ggo}|_{\kg\times\kg}$ and that $\tr{\cas_{\ad{\kg_j},-\kil_{\kg_j}}} = \dim{\kg_j}$ for any $j=1,\dots,t$ in the last equality.  On the other hand, it follows from \cite[Lemma 3.1, (iii)]{stab} that 
$$
\tr{\cas_{\qg_i,-\kil_{\ggo_i}|_{\pi_i(\kg)}}}+\tr{\cas_{\ad{\pi_i(\kg)},-\kil_{\ggo_i}|_{\pi_i(\kg)}}}=\dim{\pi_i(\kg)}=\dim{\kg},
$$
and since $\ip=c_i(-\kil_{\ggo_i})(\pi_i\cdot,\pi_i\cdot)$ and consequently,  
$$
\cas_{\pg_i,\ip}=\cas_{\qg_i,c_i(-\kil_{\ggo_i})|_{\pi_i(\kg)}}=\frac{1}{c_i}\cas_{\qg_i,-\kil_{\ggo_i}|_{\pi_i(\kg)}}, \qquad \cas_{\ad{\kg},\ip}=\frac{1}{c_i}\cas_{\ad{\pi_i(\kg)},-\kil_{\ggo_i}|_{\pi_i(\kg)}},
$$
we obtain that
\begin{equation}\label{casi-def-2}
Cas_i+Cas_0 = \frac{1}{c_i}\dim{\kg}, \qquad \forall i=1,\dots,s.
\end{equation}

We are now ready to perform the main computation of this paper.   

\begin{proposition}\label{ompt4}
Consider any $G$-invariant metric $g=(x_1,\dots,x_{2s-1})_{g_b}$ as in \eqref{metric}.  Then $g(H_Q,d\omega)=0$ for any $\omega\in(\Lambda^2\pg^*)^K$ such that $\omega(\overline{\pg},\cdot)=0$ if and only if for all $s+1\leq j<k\leq 2s-1$,
\begin{equation}\label{ompt5}
x_k\left(E_k+Cas_0F_k+2Cas_0\left(\tfrac{1}{x_{j}^2}-\tfrac{1}{x_{k}^2}\right)\right)C_j =  x_j\left(E_j+Cas_0F_j\right)C_k, 
\end{equation} 
where 
\begin{align*}
E_{s+j}:=&\tfrac{Cas_1}{x_1^2}+\dots+\tfrac{Cas_j}{x_j^2}+A_{s+j}\tfrac{Cas_{j+1}}{x_{j+1}^2}, \qquad\forall j=1,\dots,s-1, \\
F_k:=&\tfrac{1}{x_{s+1}^2}+\dots+\tfrac{1}{x_{k-1}^2}+\tfrac{D_k}{B_k}\tfrac{1}{x_k^2}, \qquad\forall k=s+1,\dots,2s-1. 
\end{align*}
\end{proposition}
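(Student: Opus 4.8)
The plan is to first reduce to $\omega=\omega_1$ via Lemma \ref{ompt3}, (iii), and then expand $g(H_Q,d\omega_1)$ through formula \eqref{Hdo1} and read off the vanishing conditions block by block. Writing a general $\omega\in(\Lambda^2\pg^*)^K$ with $\omega(\overline{\pg},\cdot)=0$ as $\omega=\omega_1+\omega_2$ as in Lemma \ref{ompt3}, (iii), we have $g(H_Q,d\omega_2)=0$ since $d\omega_2=0$, so it suffices to compute $g(H_Q,d\omega_1)$ for the $2$-form $\omega_1$ associated with an arbitrary skew-symmetric matrix $[\omega_{ij}]$, $s<i,j\leq 2s-1$. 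Substituting $H=H_Q$ and $\omega=\omega_1$ in \eqref{Hdo1}, expanding $[e^i_\alpha,e^j_\beta]_\pg$ in the $g_b$-orthonormal basis \eqref{basis}-\eqref{basis2} (only the $\widetilde{\pg}$-components matter, since $\omega_1(\overline{\pg},\cdot)=0$), and using Lemma \ref{ompt3}, (ii), one obtains
$$
g(H_Q,d\omega_1)=-3B_{2s}\sum_{\substack{s<a,b\leq 2s-1\\ i,j,\alpha,\beta,\gamma}}\frac{\omega_{ab}}{x_ix_jx_b\sqrt{B_aB_b}}\,H_Q(e^i_\alpha,e^j_\beta,e^b_\gamma)\,c_{i\alpha,j\beta}^{a\gamma}.
$$
Since $[\omega_{ab}]$ is an arbitrary skew-symmetric matrix, $g(H_Q,d\omega_1)=0$ for every such $\omega_1$ if and only if $x_a\Phi(a,b)=x_b\Phi(b,a)$ for all $s<a<b\leq 2s-1$, where
$$
\Phi(a,b):=\sum_{i,j,\alpha,\beta,\gamma}\frac{1}{x_ix_j}\,H_Q(e^i_\alpha,e^j_\beta,e^b_\gamma)\,c_{i\alpha,j\beta}^{a\gamma}.
$$

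The core of the proof is the evaluation of $\Phi(a,b)$. By \eqref{pipj}, \eqref{getai} and \eqref{pipj2}, together with a short check on \eqref{HQ} using $Q(\pg_i,\widetilde{\kg})=0$ and $[\,\widetilde{\pg},\pg_i\,]\subset\pg_i$, the sum over the first two slots $(i,j)$ reduces to the diagonal part $i=j\leq s$ plus the part with $s<i,j\leq 2s-1$ (all other cases force either $H_Q$ or $c^{a\gamma}_{i\alpha,j\beta}$ to vanish). For the first part one has $H_Q(e^i_\alpha,e^i_\beta,e^b_\gamma)=2\,Q([e^i_\alpha,e^i_\beta]_\kg,e^b_\gamma)+Q([e^i_\alpha,e^i_\beta]_{\widetilde{\pg}},e^b_\gamma)$ from \eqref{HQ}, and by Lemma \ref{ompt2}, (i) each $c_{i\alpha,i\beta}^{(s+l)\gamma}$ is a fixed scalar multiple of $c_{i\alpha,i\beta}^{2s\gamma}$; evaluating $Q$ on the basis \eqref{basis} as in the proof of Lemma \ref{ompt2}, (ii), contracting $\gamma$ against the Kronecker deltas, and using $\sum_{\alpha,\beta,\gamma}(c_{i\alpha,i\beta}^{2s\gamma})^2=Cas_i/B_{2s}$ from \eqref{casi-def}, this part collapses, after summing over $i$ (the nonzero contributions being exactly those with $i\leq a-s+1$, carrying the factor $A_a$ at $i=a-s+1$), to a multiple of $E_a\,C_b$. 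For the second part, $H_Q$ on triples lying entirely in $\widetilde{\pg}$ is computed directly from \eqref{HQ}; the relevant structural constants are those of $\kg$ scaled by $\tfrac{1}{\sqrt{B_j}}$ or $\tfrac{D_i}{\sqrt{B_i^3}}$ as in Lemma \ref{ompt2}, (i), and $\sum_{\alpha,\beta,\gamma}(\overline{c}_{\alpha\beta}^\gamma)^2=Cas_0$; this produces the term $Cas_0\,F_a\,C_b$, plus — because $[\ggo_{\eta_{a-s}},\ggo_{\eta_{b-s}}]$ and $[\ggo_{\eta_{b-s}},\ggo_{\eta_{a-s}}]$ feed into $\Phi(a,b)$ and $\Phi(b,a)$ asymmetrically when $a\neq b$ — an extra term $2Cas_0\big(\tfrac{1}{x_b^2}-\tfrac{1}{x_a^2}\big)C_b$ in the case $a>b$. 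Altogether $\Phi(a,b)=(E_a+Cas_0F_a)C_b$ for $a\leq b$ and $\Phi(a,b)=\big(E_a+Cas_0F_a+2Cas_0(\tfrac{1}{x_b^2}-\tfrac{1}{x_a^2})\big)C_b$ for $a>b$.

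Finally, substituting $\Phi(j,k)=(E_j+Cas_0F_j)C_k$ and $\Phi(k,j)=\big(E_k+Cas_0F_k+2Cas_0(\tfrac{1}{x_j^2}-\tfrac{1}{x_k^2})\big)C_j$ into $x_j\Phi(j,k)=x_k\Phi(k,j)$ for $s<j<k\leq 2s-1$ gives exactly \eqref{ompt5}. I expect the main obstacle to be the middle step: since an aligned $G/K$ is far from multiplicity-free, the $\Ad(K)$-invariant $2$-forms $\omega_1$ pair distinct copies of each simple factor $\kg_m$ (and of the center) sitting in different blocks $\ggo_{\eta_l}$, so one must track all the scaling constants $A_{s+l},B_{s+l},D_{s+l}$ and the cross-block values of $Q$; combined with the need to evaluate $H_Q$ on triples entirely inside $\widetilde{\pg}$ through the full formula \eqref{HQ} rather than the simplified \eqref{HQ1}, this is where the computation is heaviest, and it is exactly where the asymmetric correction term $2Cas_0(\tfrac{1}{x_j^2}-\tfrac{1}{x_k^2})$ in \eqref{ompt5} originates.
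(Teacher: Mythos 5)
Your reduction to the identity $x_a\Phi(a,b)=x_b\Phi(b,a)$ is correct and is the paper's computation in disguise, and both the identification of the contributing blocks ($i=j\leq s$ and $s<i,j\leq 2s-1$) and the formula $H_Q(e^i_\alpha,e^i_\beta,e^b_\gamma)=2Q([e^i_\alpha,e^i_\beta]_\kg,e^b_\gamma)+Q([e^i_\alpha,e^i_\beta]_{\widetilde{\pg}},e^b_\gamma)$ are right. The gap is in the one sentence carrying all the weight: that the $i=j\leq s$ part ``collapses to a multiple of $E_aC_b$.'' It does not follow from the ingredients you cite. Evaluating $Q([e^i_\alpha,e^i_\beta]_{\widetilde{\pg}},e^b_\gamma)$ requires the values $Q(e^{a'}_\delta,e^b_\gamma)$ for $s<a',b\leq 2s-1$, which Lemma \ref{ompt2}, (ii) does not supply (it only gives $Q(e^{2s}_\delta,e^j_\gamma)$). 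Computing them from the definitions gives $Q(e^{a'}_\delta,e^b_\gamma)=-\delta_{\delta\gamma}C_{\min(a',b)}/\sqrt{B_{a'}B_b}$ for $a'\neq b$, so the coefficient varies with $a'$ and the sum over $a'$ is not proportional to $C_b$. If $\epsilon_i(a)$ denotes the $i$-th coordinate of the vector $\eta_{a-s}$ (equal to $1$, $A_a$ or $0$), carrying the sum through via $\sum_{a'}\epsilon_i(a')\epsilon_l(a')/B_{a'}=\tfrac{c_i}{z_i}\delta_{il}$ yields that the $i=j\leq s$ block of $\Phi(a,b)$ equals
$-\tfrac{1}{\sqrt{B_aB_b}}\bigl(\tfrac{1}{B_{2s}}E_aC_b+\sum_{i\leq s}\tfrac{Cas_i\,y_i}{x_i^2z_i}\epsilon_i(a)\epsilon_i(b)\bigr)$.
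The second summand involves $y_i/z_i$ rather than $y_i/c_i$, so it is not a multiple of $E_aC_b$; being symmetric in $(a,b)$ it cancels in $\Phi(a,b)-\Phi(b,a)$ (hence is invisible for the normal metric), but it does not cancel in $x_a\Phi(a,b)-x_b\Phi(b,a)$ when $x_a\neq x_b$, and your sketch provides no mechanism for disposing of it.

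This is precisely where the paper takes a different and essential route: it never pairs the full $H_Q$ against $d\omega$. It first writes $H_Q=\widetilde{Q}+d\alpha_Q$ with $\widetilde{Q}=Q([\cdot,\cdot],\cdot)$ and discards $\widetilde{Q}$ by appealing to its coclosedness \eqref{Qbcc}, so that only terms of the form $Q([\cdot,\cdot]_\kg,\cdot)$ (with coefficient $1$, not $2$) ever appear, and these are controlled by Lemma \ref{ompt2}, (ii) alone; the extra summand above is exactly the $i=j\leq s$ contribution of $g(\widetilde{Q},d\omega)$ that this step removes. Your asserted collapse is therefore equivalent to the claim $g(\widetilde{Q},d\omega)=0$ for the non-normal metric $g=(x_1,\dots,x_{2s-1})_{g_b}$ — a genuine statement that needs an argument (note that \eqref{Qbcc} as proved only gives $g_b$-coclosedness, and \S\ref{LG-sec} shows that coclosedness of Cartan forms for non-bi-invariant metrics is a delicate matter). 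To repair your proof you must either establish that vanishing for the metrics at hand, or keep the extra symmetric term, together with the analogous corrections in the $s<i,j$ block, all the way to the final identity.
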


\begin{proof}
We consider the $g_b$-orthonormal basis $\{ e_\alpha^i:1\leq i\leq 2s-1,\; 1\leq\alpha\leq\dim{\pg_i}\}$ of $\pg$ given in \eqref{basis} and \eqref{basis2}.  It follows from Lemma \ref{ompt3}, (iii) that $\omega$ can be assumed to be as in parts (i) and (ii) of the same lemma.  Recall from \eqref{Qbcc} that the $3$-form $\widetilde{Q}$ is $g_b$-coclosed.  It therefore follows from \eqref{HQ}, \eqref{Hdo1}, \eqref{pipj} and \eqref{getai} (which in particular implies that $Q(\kg,e_\alpha^i)=0$ for all $i\leq s$, $[e_\alpha^i,e_\beta^j]=0$ for all $i\ne j\leq s$ and $[e_\alpha^i,e_\beta^j]_\kg=0$ for all $s<i\ne j$) that 
\begin{align*}
& g_b(H_Q,d\omega) = g_b(d\alpha_Q,d\omega)\\
=&-3\sum_{\substack{\alpha,\beta,\gamma\\ i,j;\; s<k}} \tfrac{1}{x_ix_jx_k} \left(Q([e_\alpha^i,e_\beta^j]_\kg,e_\gamma^k) - Q([e_\alpha^i,e_\gamma^k]_\kg,e_\beta^j) + Q([e_\beta^j,e_\gamma^k]_\kg,e_\alpha^i)\right)\omega([e_\alpha^i,e_\beta^j]_{\widetilde{\pg}},e_\gamma^k) \\  
=& -3\sum_{\substack{\alpha,\beta,\gamma\\ i\leq s<j}} \tfrac{1}{x_i^2x_j}Q([e_\alpha^i,e_\beta^i]_\kg,e_\gamma^j)\omega([e_\alpha^i,e_\beta^i]_{\widetilde{\pg}},e_\gamma^j) \\
&-3\sum_{\substack{\alpha,\beta,\gamma\\ s<i}} \tfrac{1}{x_i^3} \left(Q([e_\alpha^i,e_\beta^i]_\kg,e_\gamma^i) - Q([e_\alpha^i,e_\gamma^i]_\kg,e_\beta^i) + Q([e_\beta^i,e_\gamma^i]_\kg,e_\alpha^i)\right)\omega([e_\alpha^i,e_\beta^i]_{\widetilde{\pg}},e_\gamma^i) \\ 
&-3\sum_{\substack{\alpha,\beta,\gamma\\ s<i\ne j}} \tfrac{1}{x_i^2x_j}Q([e_\alpha^i,e_\beta^i]_\kg,e_\gamma^j)\omega([e_\alpha^i,e_\beta^i]_{\widetilde{\pg}},e_\gamma^j) \\ 
&+3\sum_{\substack{\alpha,\beta,\gamma\\ s<i\ne j}} \tfrac{1}{x_i^2x_j}Q([e_\alpha^i,e_\gamma^i]_\kg,e_\beta^j)\omega([e_\alpha^i,e_\beta^j]_{\widetilde{\pg}},e_\gamma^i) 
-3\sum_{\substack{\alpha,\beta,\gamma\\ s<i\ne j}} \tfrac{1}{x_i^2x_j}Q([e_\beta^i,e_\gamma^i]_\kg,e_\alpha^j)\omega([e_\alpha^j,e_\beta^i]_{\widetilde{\pg}},e_\gamma^i) \\ 
=& -3\sum_{\substack{\alpha,\beta,\gamma\\ i\leq s<j}} \tfrac{1}{x_i^2x_j}Q([e_\alpha^i,e_\beta^i]_\kg,e_\gamma^j)\omega([e_\alpha^i,e_\beta^i]_{\widetilde{\pg}},e_\gamma^j) 
%&-3\sum_{\substack{\alpha,\beta,\gamma\\ s<i}} \tfrac{1}{x_i^3}Q([e_\alpha^i,e_\beta^i]_\kg,e_\gamma^i) \omega([e_\alpha^i,e_\beta^i]_{\widetilde{\pg}},e_\gamma^i) 
+6\sum_{\substack{\alpha,\beta,\gamma\\ s<i}} \tfrac{1}{x_i^3}  Q([e_\alpha^i,e_\gamma^i]_\kg,e_\beta^i)\omega([e_\alpha^i,e_\beta^i]_{\widetilde{\pg}},e_\gamma^i)\\ 
&-3\sum_{\substack{\alpha,\beta,\gamma\\ s<i, j}} \tfrac{1}{x_i^2x_j}Q([e_\alpha^i,e_\beta^i]_\kg,e_\gamma^j)\omega([e_\alpha^i,e_\beta^i]_{\widetilde{\pg}},e_\gamma^j)
+6\sum_{\substack{\alpha,\beta,\gamma\\ s<j< i}} \tfrac{1}{x_i^2x_j}Q([e_\alpha^i,e_\beta^i]_\kg,e_\gamma^j)\omega([e_\alpha^i,e_\gamma^j]_{\widetilde{\pg}},e_\beta^i).   
\end{align*}
The fourth line in the above computation consists of three summands, the first one is contained in the left summand of the last line and the other two form the right summand of the penultimate line.  We also note that the sixth line becomes the right summand of the last line (recall that $[e^i_\alpha,e^j_\beta]\in\pg_i$ and so $\omega([e^i_\alpha,e^j_\beta],e^i_\gamma)=0$ for any $s<i<j$).  

Using Lemma \ref{ompt2} and the fact that $\omega(e^i_\alpha,e^j_\beta)=0$ for all $\alpha\ne\beta$ (see Lemma \ref{ompt3}, (ii)), we write each $[e^i_\alpha,e^j_\beta]_\kg$ and each $[e^i_\alpha,e^j_\beta]_{\widetilde{\pg}}$ as a linear combination of the $g_b$-orthonormal bases $\{ e^{2s}_\delta\}$ and $\{ e^k_\delta\}$, respectively, to obtain that 
\begin{align*} 
g_b(H_Q,d\omega)=
&-3\sum_{\substack{\alpha,\beta,\gamma\\ i\leq s<j;\; i+s-1\leq k}} \tfrac{1}{x_i^2x_j}c_{i\alpha,i\beta}^{2s\gamma}Q(e_\gamma^{2s},e_\gamma^j) c_{i\alpha,i\beta}^{k\gamma}\omega(e_\gamma^k,e_\gamma^j) \\ 
%&-3\sum_{\substack{\alpha,\beta,\gamma\\ s<i< k}} \tfrac{1}{x_i^3}c_{i\alpha,i\beta}^{2s\gamma}Q(e_\gamma^{2s},e_\gamma^i) c_{i\alpha,i\beta}^{k\gamma}\omega(e_\gamma^k,e_\gamma^i) \\
&+6\sum_{\substack{\alpha,\beta,\gamma\\ s<i< k}} \tfrac{1}{x_i^3}c_{i\alpha,i\gamma}^{2s\beta}Q(e_\beta^{2s},e_\beta^i) c_{i\alpha,i\beta}^{k\gamma}\omega(e_\gamma^k,e_\gamma^i) \\
&-3\sum_{\substack{\alpha,\beta,\gamma\\ s<i, j;\, i\leq k}} \tfrac{1}{x_i^2x_j}c_{i\alpha,i\beta}^{2s\gamma}Q(e_\gamma^{2s},e_\gamma^j) c_{i\alpha,i\beta}^{k\gamma}\omega(e_\gamma^k,e_\gamma^j) \\
&+6\sum_{\substack{\alpha,\beta,\gamma\\ s<j<i}} \tfrac{1}{x_i^2x_j}c_{i\alpha,i\beta}^{2s\gamma}Q(e_\gamma^{2s},e_\gamma^j) c_{i\alpha,j\gamma}^{j\beta}\omega(e_\beta^j,e_\beta^i),
%+6\sum_{\substack{\alpha,\beta,\gamma\\ s<i<k}} \tfrac{1}{x_i^3}c_{i\alpha,i\beta}^{2s\gamma}Q(e_\gamma^{2s},e_\gamma^i) c_{i\alpha,i\gamma}^{k\beta}\omega(e_\beta^k,e_\beta^i),  
\end{align*}
which by Lemma \ref{ompt2}, (i) gives that ,  
\begin{align*} 
g_b(H_Q,d\omega) 
=& -3\sum_{\substack{\alpha,\beta,\gamma\\ i\leq s<j;\; i+s\leq k}} \tfrac{1}{x_i^2x_j}c_{i\alpha,i\beta}^{2s\gamma}Q(e_\gamma^{2s},e_\gamma^j) \tfrac{\sqrt{B_{2s}}}{\sqrt{B_k}}c_{i\alpha,i\beta}^{2s\gamma}\omega(e_\gamma^k,e_\gamma^j) \\ 
&-3\sum_{\substack{\alpha,\beta,\gamma\\ i\leq s<j}} \tfrac{1}{x_i^2x_j}c_{i\alpha,i\beta}^{2s\gamma}Q(e_\gamma^{2s},e_\gamma^j) \tfrac{A_{i+s-1}\sqrt{B_{2s}}}{\sqrt{B_{i+s-1}}}c_{i\alpha,i\beta}^{2s\gamma}\omega(e_\gamma^{i+s-1},e_\gamma^j) \\
%
%&-3\sum_{\substack{\alpha,\beta,\gamma\\ s<i< k}} \tfrac{1}{x_i^3}\tfrac{1}{\sqrt{B_{2s}}}\overline{c}_{\alpha\beta}^\gamma Q(e_\gamma^{2s},e_\gamma^i) \tfrac{1}{\sqrt{B_k}}\overline{c}_{\alpha\beta}^\gamma\omega(e_\gamma^k,e_\gamma^i) \\
%
&+6\sum_{\substack{\alpha,\beta,\gamma\\ s<i< k}} \tfrac{1}{x_i^3}\tfrac{1}{\sqrt{B_{2s}}}\overline{c}_{\alpha\gamma}^\beta Q(e_\beta^{2s},e_\beta^i) \tfrac{1}{\sqrt{B_k}}\overline{c}_{\alpha\beta}^\gamma\omega(e_\gamma^k,e_\gamma^i) \\
&-3\sum_{\substack{\alpha,\beta,\gamma\\ s<i, j;\; i<k}} \tfrac{1}{x_i^2x_j}\tfrac{1}{\sqrt{B_{2s}}}\overline{c}_{\alpha\beta}^\gamma Q(e_\gamma^{2s},e_\gamma^j) \tfrac{1}{\sqrt{B_k}}\overline{c}_{\alpha\beta}^\gamma\omega(e_\gamma^k,e_\gamma^j) \\
& -3\sum_{\substack{\alpha,\beta,\gamma\\ s<i, j}} \tfrac{1}{x_i^2x_j}\tfrac{1}{\sqrt{B_{2s}}}\overline{c}_{\alpha\beta}^\gamma Q(e_\gamma^{2s},e_\gamma^j) \tfrac{D_i}{\sqrt{B_i^3}} \overline{c}_{\alpha\beta}^\gamma\omega(e_\gamma^i,e_\gamma^j)\\ 
 & +6\sum_{\substack{\alpha,\beta,\gamma\\ s<j<i}} \tfrac{1}{x_i^2x_j}\tfrac{1}{\sqrt{B_{2s}}}\overline{c}_{\alpha\beta}^\gamma Q(e_\gamma^{2s},e_\gamma^j) \tfrac{-1}{\sqrt{B_i}}\overline{c}_{\beta\gamma}^\alpha\omega(e_\beta^j,e_\beta^i). 
% \\
%& +6\sum_{\substack{\alpha,\beta,\gamma\\ s<i<k}} \tfrac{1}{x_i^3}\tfrac{1}{\sqrt{B_{2s}}}\overline{c}_{\alpha\beta}^\gamma Q(e_\gamma^{2s},e_\gamma^i) \tfrac{1}{\sqrt{B_k}}\overline{c}_{\alpha\gamma}^\beta\omega(e_\beta^k,e_\beta^i).
 \end{align*}
 
We now use Lemma \ref{ompt2}, (ii) and Lemma \ref{ompt3}, (ii) to obtain that 
\begin{align*}
& g_b(H_Q,d\omega) \\
=& -3\sum_{\substack{\alpha,\beta,\gamma\\ i\leq s<j;\; i+s\leq k}} \tfrac{1}{x_i^2x_j}(c_{i\alpha,i\beta}^{2s\gamma})^2 \tfrac{-C_j}{\sqrt{B_{2s}B_j}} \tfrac{\sqrt{B_{2s}}}{\sqrt{B_k}} \tfrac{B_{2s}}{\sqrt{B_kB_j}}\omega_{kj}  \\
& -3\sum_{\substack{\alpha,\beta,\gamma\\ i\leq s<j}} \tfrac{1}{x_i^2x_j}(c_{i\alpha,i\beta}^{2s\gamma})^2 \tfrac{-C_j}{\sqrt{B_{2s}B_j}} \tfrac{A_{i+s-1}\sqrt{B_{2s}}}{\sqrt{B_{i+s-1}}} \tfrac{B_{2s}}{\sqrt{B_{i+s-1}B_j}}\omega_{(i+s-1)j} \\ 
&-6\sum_{\substack{\alpha,\beta,\gamma\\ s<i< k}} \tfrac{1}{x_i^3}\tfrac{1}{\sqrt{B_{2s}}} (\overline{c}_{\alpha\beta}^\gamma)^2 \tfrac{-C_i}{\sqrt{B_{2s}B_i}} \tfrac{1}{\sqrt{B_k}} \tfrac{B_{2s}}{\sqrt{B_kB_i}}\omega_{ki} \\
&-3\sum_{\substack{\alpha,\beta,\gamma\\ s<i, j;\, i<k}} \tfrac{1}{x_i^2x_j}\tfrac{1}{\sqrt{B_{2s}}}(\overline{c}_{\alpha\beta}^\gamma)^2 \tfrac{-C_j}{\sqrt{B_{2s}B_j}} \tfrac{1}{\sqrt{B_k}} \tfrac{B_{2s}}{\sqrt{B_kB_j}}\omega_{kj} \\ 
& -3\sum_{\substack{\alpha,\beta,\gamma\\ s<i, j}} \tfrac{1}{x_i^2x_j}\tfrac{1}{\sqrt{B_{2s}}} (\overline{c}_{\alpha\beta}^\gamma)^2 \tfrac{-C_j}{\sqrt{B_{2s}B_j}} \tfrac{D_i}{\sqrt{B_i^3}} \tfrac{B_{2s}}{\sqrt{B_iB_j}}\omega_{ij} \\
 & -6\sum_{\substack{\alpha,\beta,\gamma\\ s<j<i}} \tfrac{1}{x_i^2x_j}\tfrac{1}{\sqrt{B_{2s}}}(\overline{c}_{\alpha\beta}^\gamma)^2 \tfrac{-C_j}{\sqrt{B_{2s}B_j}} \tfrac{1}{\sqrt{B_i}} \tfrac{B_{2s}}{\sqrt{B_jB_i}}\omega_{ji} \\
%& +6\sum_{\substack{\alpha,\beta,\gamma\\ s<j<i}} \tfrac{1}{x_j^3}\tfrac{1}{\sqrt{B_{2s}}}\overline{c}_{\alpha\beta}^\gamma Q(e_\gamma^{2s},e_\gamma^j) \tfrac{1}{\sqrt{B_i}}(-\overline{c}_{\alpha\beta}^\gamma)\omega(e_\beta^i,e_\beta^j) \\
%
%
=& 3\sum_{\substack{\alpha,\beta,\gamma\\ i\leq s<j;\; i+s\leq k}} \tfrac{1}{x_i^2x_j}B_{2s}(c_{i\alpha,i\beta}^{2s\gamma})^2 \tfrac{C_j}{B_jB_k} \omega_{kj}  
+3\sum_{\substack{\alpha,\beta,\gamma\\ i\leq s<j}} \tfrac{1}{x_i^2x_j}B_{2s}(c_{i\alpha,i\beta}^{2s\gamma})^2 \tfrac{C_jA_{i+s-1}}{B_jB_{i+s-1}} \omega_{(i+s-1)j} \\ 
& +3\sum_{\substack{\alpha,\beta,\gamma\\ s<i\ne j;\, i<k}} \tfrac{1}{x_i^2x_j}\tfrac{1}{B_kB_j}(\overline{c}_{\alpha\beta}^\gamma)^2 C_j \omega_{kj} 
+ 3\sum_{\substack{\alpha,\beta,\gamma\\ s<i\ne j}} \tfrac{1}{x_i^2x_j}\tfrac{D_i}{B_i^2B_j}  (\overline{c}_{\alpha\beta}^\gamma)^2 C_j \omega_{ij} \\
&+6\sum_{\substack{\alpha,\beta,\gamma\\ s<i< k}} \tfrac{1}{x_i^3} (\overline{c}_{\alpha\beta}^\gamma)^2 \tfrac{C_i}{B_kB_i}\omega_{ki} 
+6\sum_{\substack{\alpha,\beta,\gamma\\ s<j<i}} \tfrac{1}{x_i^2x_j}  (\overline{c}_{\alpha\beta}^\gamma)^2 \tfrac{C_j}{B_jB_i}\omega_{ji}. 
\end{align*}
The fourth line above goes to the left summand of the last line and the fifth and sixth lines form the penultimate line.  

It now follows from \eqref{casi-def} that  
\begin{align*}
& g_b(H_Q,d\omega) = \\
%& 3\sum_{\substack{\alpha,\beta,\gamma\\ i\leq s<j;\; i+s\leq k}} \tfrac{1}{x_i^2x_j}B_{2s}(c_{i\alpha,i\beta}^{2s\gamma})^2 \tfrac{C_j}{B_jB_k} \omega_{kj}  
%+3\sum_{\substack{\alpha,\beta,\gamma\\ s<j,k}} \tfrac{1}{x_{k-s+1}^2x_j}B_{2s}(c_{(k-s+1)\alpha,(k-s+1)\beta}^{2s\gamma})^2 \tfrac{C_jA_k}{B_jB_k} \omega_{kj} \\ 
%& +3\sum_{\substack{\alpha,\beta,\gamma\\ s<i\ne j;\, i<k}} \tfrac{1}{x_i^2x_j}\tfrac{1}{B_kB_j}(\overline{c}_{\alpha\beta}^\gamma)^2 C_j \omega_{kj} 
%+ 3\sum_{\substack{\alpha,\beta,\gamma\\ s<k\ne j}} \tfrac{1}{x_k^2x_j}\tfrac{D_k}{B_k^2B_j}  (\overline{c}_{\alpha\beta}^\gamma)^2 C_j \omega_{kj} \\ 
%& +6\sum_{\substack{\alpha,\beta,\gamma\\ s<j< k}} \tfrac{1}{x_j^3} (\overline{c}_{\alpha\beta}^\gamma)^2 \tfrac{C_j}{B_kB_j}\omega_{kj} 
%-6\sum_{\substack{\alpha,\beta,\gamma\\ s<j<k}} \tfrac{1}{x_k^2x_j}  (\overline{c}_{\alpha\beta}^\gamma)^2 \tfrac{C_j}{B_jB_k}\omega_{kj} \\
=& 3\sum_{\substack{s<j,k}} \left(\tfrac{Cas_1}{x_1^2}+\dots+\tfrac{Cas_{k-s}}{x_{k-s}^2}\right) \tfrac{C_j}{B_jB_kx_j} \omega_{kj}  
+3\sum_{\substack{s<j,k}} \tfrac{Cas_{k-s+1}}{x_{k-s+1}^2}A_k \tfrac{C_j}{B_jB_kx_j} \omega_{kj} \\ 
&+ 3Cas_0\left(\sum_{\substack{s<i\ne j;\, i<k}} \tfrac{1}{x_i^2}\tfrac{C_j}{B_kB_jx_j} \omega_{kj} 
+ \sum_{\substack{s<k\ne j}} \tfrac{1}{x_k^2}\tfrac{D_kC_j}{B_k^2B_jx_j} \omega_{kj}\right) \\ 
&+6Cas_0\sum_{\substack{s<j< k}} \tfrac{1}{x_j^3}  \tfrac{C_j}{B_kB_j}\omega_{kj} -6Cas_0\left(\sum_{\substack{s<j<k}} \tfrac{1}{x_k^2x_j} \tfrac{C_j}{B_jB_k}\omega_{kj}\right)\\ 
%
%=& 3\sum_{\substack{s<j,k}} E_k\tfrac{C_j}{B_jB_kx_j} \omega_{kj} \\ 
%&+3Cas_0\left(\sum_{\substack{s<k,j}} \left(\tfrac{1}{x_{s+1}^2}+\dots+\tfrac{1}{x_{k-1}^2}\right)\tfrac{C_j}{B_kB_jx_j} \omega_{kj} 
%+ \sum_{\substack{s<k,j}} \tfrac{1}{x_k^2}\tfrac{D_kC_j}{B_k^2B_jx_j} \omega_{kj}\right) \\
%&+6Cas_0\sum_{\substack{s<j< k}} \left(\tfrac{1}{x_j^2}-\tfrac{1}{x_k^2}\right) \tfrac{C_j}{B_kB_jx_j}\omega_{kj}\\
%
=& 3\sum_{\substack{s<k,j}} E_k\tfrac{C_j}{B_kB_jx_j} \omega_{kj} +3Cas_0\sum_{\substack{s<k,j}} F_k\tfrac{C_j}{B_kB_jx_j} \omega_{kj} 
+6Cas_0\sum_{\substack{s<j< k}} \left(\tfrac{1}{x_j^2}-\tfrac{1}{x_k^2}\right) \tfrac{C_j}{B_kB_jx_j}\omega_{kj}.
%
%=& 3\sum_{\substack{s<k,j}} E_k\tfrac{C_j}{B_kB_jx_j} \omega_{kj} +3Cas_0\sum_{\substack{s<k,j}} F_k\tfrac{C_j}{B_kB_jx_j} \omega_{kj} \\
%&+\unm \left(3Cas_0\sum_{\substack{s<j<k}} \left(\tfrac{3}{x_j^2}-\tfrac{2}{x_k^2}\right) \tfrac{C_j}{B_kB_jx_j}\omega_{kj} 
%+ \sum_{\substack{s<k<j}} \left(\tfrac{3}{x_k^2}-\tfrac{2}{x_j^2}\right) \tfrac{C_k}{B_kB_jx_k}\omega_{jk}\right),
\end{align*}

This implies that $g_b(H_Q,d\omega)=0$ for any skew-symmetric matrix $[\omega_{kj}]$ if and only if 
$$
\left(E_k+Cas_0F_k\right)\tfrac{C_j}{x_j} + 2Cas_0\left(\tfrac{1}{x_j^2}-\tfrac{1}{x_k^2}\right) \tfrac{C_j}{x_j} 
= \left(E_j+Cas_0F_j\right)\tfrac{C_k}{x_k}, \qquad \forall j<k,
$$
concluding the proof.
\end{proof}

We are finally in a position to state and prove our main result. 

\begin{theorem}\label{HQgb}
Let $M=G/K$ be a homogeneous space, where $G$ is compact, connected and semisimple and $K$ is connected, and fix decompositions $\ggo=\ggo_1\oplus\dots\oplus\ggo_s$ and $\kg=\zg(\kg)\oplus\kg_1\oplus\dots\oplus\kg_t$ in simple factors.  We assume that conditions (i) and (ii) below \eqref{h} hold and that $M=G/K$ is aligned with positive constants $c_1,\dots,c_s$ and $\lambda_1,\dots,\lambda_t$ (see Definition \ref{aligned}).  Consider any $G$-invariant metric of the form
$$
g=x_1g_b|_{\pg_1\times\pg_1}+\dots+x_{2s-1}g_b|_{\pg_{2s-1}\times\pg_{2s-1}}, \qquad x_1,\dots,x_{2s-1}>0, 
$$
where $g_b$ is the normal metric given by
$$
g_b=z_1(-\kil_{\ggo_1})+\dots+z_s(-\kil_{\ggo_s}), \qquad z_1,\dots,z_s>0,   
$$
and $\pg=\pg_1\oplus\dots\oplus\pg_{2s-1}$ is the $g_b$-orthogonal reductive decomposition of $G/K$ given by Proposition \ref{red-al}.  If we set 
\begin{align*}
a_j:=&\dim{\kg}\tfrac{1}{c_{j+1}x_{j+1}^2}+Cas_0\left(\tfrac{1}{x_{s+j}^2}-\tfrac{1}{x_{j+1}^2}\right), \qquad j=1,\dots,s-1,\\ 
b_j:=&\dim{\kg}\left(\tfrac{1}{c_1x_1^2}+\dots+\tfrac{1}{c_jx_j^2}\right) + 
Cas_0\left(\tfrac{1}{x_{s+1}^2}-\tfrac{1}{x_{1}^2}+\dots+\tfrac{1}{x_{s+j}^2}-\tfrac{1}{x_{j}^2}\right), 
\end{align*}
where $Cas_0 := \lambda_1\dim{\kg_1}+\dots+\lambda_t\dim{\kg_t}$, then $a_j,b_j\geq 0$ for all $j$ and the closed $3$-form $H_Q$ given in \eqref{HQ}, where
$$
Q=y_1\kil_{\ggo_1}+\dots+y_s\kil_{\ggo_s}, \qquad \tfrac{y_1}{c_1}+\dots+\tfrac{y_s}{c_s}=0, 
$$ 
is $g$-harmonic if and only if 
\begin{equation}\label{harm}
x_{s+k}\left(a_kA_k+b_k+2Cas_0\left(\tfrac{1}{x_{s+j}^2}-\tfrac{1}{x_{s+k}^2}\right)\right)C_j  
= x_{s+j}(a_jA_j+b_j)C_k, 
\end{equation}
for all $1\leq j<k\leq s-1$, where 
$$
A_j:=-\tfrac{c_{j+1}}{z_{j+1}}\left(\tfrac{z_1}{c_1}+\dots+\tfrac{z_j}{c_j}\right), \qquad C_j:=\tfrac{y_1}{c_1}+\dots+\tfrac{y_j}{c_j}+A_j\tfrac{y_{j+1}}{c_{j+1}}.  
$$
\end{theorem}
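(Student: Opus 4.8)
The plan is to read the theorem off from Propositions~\ref{ompt1} and~\ref{ompt4}, after matching the notation of \S\ref{harmal-sec} with that of the statement. First I would recall that $H_Q$ is closed by Proposition~\ref{H3}, so being $g$-harmonic is equivalent to being $g$-coclosed, i.e.\ $g(H_Q,d\omega)=0$ for every $\omega\in(\Lambda^2\pg^*)^K$. Under hypotheses~(i) and~(ii) below~\eqref{h}, the isotypic argument recorded there gives $\omega(\overline{\pg},\widetilde{\pg})=0$, hence $(\Lambda^2\pg^*)^K=(\Lambda^2\overline{\pg}^*)^K\oplus(\Lambda^2\widetilde{\pg}^*)^K$; Proposition~\ref{ompt1} kills the contribution of the first summand, so $H_Q$ is $g$-harmonic precisely when $g(H_Q,d\omega)=0$ for all $\omega$ with $\omega(\overline{\pg},\cdot)=0$, which by Proposition~\ref{ompt4} is the system~\eqref{ompt5} over $s+1\le j<k\le 2s-1$.

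It then remains to do the bookkeeping: perform the index shift $j\mapsto s+j$, $k\mapsto s+k$ (a bijection of $\{1,\dots,s-1\}$ onto $\{s+1,\dots,2s-1\}$) and identify coefficients. From the definitions one reads off $A_j=A_{s+j}$ and $C_j=C_{s+j}$, and the term $2Cas_0\left(x_{s+j}^{-2}-x_{s+k}^{-2}\right)$ is already in the right shape, so the only thing to verify is
$$
E_{s+j}+Cas_0\,F_{s+j}=a_jA_j+b_j .
$$
Using~\eqref{casi-def-2} in the form $\dim{\kg}/c_i=Cas_i+Cas_0$, a one-line computation rewrites the coefficients of the statement as
\begin{align*}
a_j&=\tfrac{Cas_{j+1}}{x_{j+1}^2}+\tfrac{Cas_0}{x_{s+j}^2}, &
b_j&=\tfrac{Cas_1}{x_1^2}+\dots+\tfrac{Cas_j}{x_j^2}+Cas_0\left(\tfrac{1}{x_{s+1}^2}+\dots+\tfrac{1}{x_{s+j}^2}\right),
\end{align*}
which in particular makes $a_j,b_j\ge 0$ transparent, since $Cas_i$ and $Cas_0$ are nonnegative by~\eqref{casi-def}. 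Feeding these into the right-hand side and comparing with the definition of $E_{s+j}$ reduces the claimed identity to $F_{s+j}=\tfrac{1}{x_{s+1}^2}+\dots+\tfrac{1}{x_{s+j}^2}+A_{s+j}\tfrac{1}{x_{s+j}^2}$, equivalently to $D_{s+j}/B_{s+j}=1+A_{s+j}$.

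This last identity is the only real computation. It follows from the observation that $A_{s+j}\tfrac{z_{j+1}}{c_{j+1}}=-S_j$, where $S_j:=\tfrac{z_1}{c_1}+\dots+\tfrac{z_j}{c_j}>0$: substituting this into the definitions of $B_{s+j}$ and $D_{s+j}$ yields $B_{s+j}=S_j(1-A_{s+j})$ and $D_{s+j}=S_j(1-A_{s+j}^2)$, and since $A_{s+j}<0$ the factor $S_j(1-A_{s+j})$ is positive, so indeed $D_{s+j}/B_{s+j}=1+A_{s+j}$. Substituting everything back into~\eqref{ompt5} after the index shift produces~\eqref{harm}, and the proof is complete. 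I do not anticipate a genuine obstacle here: all the analytic work already resides in Propositions~\ref{ompt1} and~\ref{ompt4}, and what is left is careful translation between the two notational systems, the only mildly delicate points being the positivity of the Casimir traces and keeping the index shift straight.
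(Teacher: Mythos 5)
Your proposal is correct and follows the paper's own route exactly: reduce to Propositions \ref{ompt1} and \ref{ompt4} via \eqref{h}, rewrite $a_j,b_j$ through \eqref{casi-def-2} (Remark \ref{Hqgb-rem2}), and match \eqref{ompt5} with \eqref{harm} via the identity $D_{s+j}/B_{s+j}=1+A_{s+j}$. Your explicit verification of that last identity (via $B_{s+j}=S_j(1-A_{s+j})$, $D_{s+j}=S_j(1-A_{s+j}^2)$) is the one step the paper leaves as "easy to see," and it checks out.
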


\begin{remark}\label{Hqgb-rem5}
Condition \eqref{harm} trivially holds if $s=2$ (cf.\ \S\ref{s2-sec}) or $\kg=0$ (cf.\ \S\ref{LG-sec}).  
\end{remark}

\begin{remark}\label{Hqgb-rem3}
In the case when $g=g_b$ is a normal metric (i.e., $x_1=\dots=x_{2s-1}=1$), the constants $a_j$ and $b_j$ become
$$
a_j=\dim{\kg}\tfrac{1}{c_{j+1}},  \qquad b_j=\dim{\kg}\left(\tfrac{1}{c_1}+\dots+\tfrac{1}{c_j}\right), \qquad \forall j=1,\dots,s-1,
$$
that is, they are just algebraic invariants of $G/K$, and condition \eqref{harm} becomes
$$
x_{s+k}(a_kA_k+b_k)C_j  
= x_{s+j}(a_jA_j+b_j)C_k, \qquad\forall 1\leq j<k\leq s-1.
$$ 
\end{remark}

\begin{remark}\label{Hqgb-rem2}
Using \eqref{casi-def-2}, we can rewrite the non-negative constants $a_j$ and $b_j$ as
\begin{align*}
a_j=&\tfrac{Cas_{j+1}}{x_{j+1}^2}+\tfrac{Cas_0}{x_{s+j}^2}, \qquad j=1,\dots,s-1,\\ 
b_j=&\tfrac{Cas_1}{x_1^2}+\dots+\tfrac{Cas_j}{x_j^2} + 
Cas_0\left(\tfrac{1}{x_{s+1}^2}+\dots+\tfrac{1}{x_{s+j}^2}\right), 
\end{align*}
where $Cas_i:= \frac{1}{c_i}\tr{\cas_{\qg_i,-\kil_{\ggo_i}|_{\pi_i(\kg)}}}$ and $\cas_{\qg_i,-\kil_{\ggo_i}|_{\pi_i(\kg)}}$ is the Casimir operator of the isotropy representation $\qg_i$ of the homogeneous space $G_i/\pi_i(K)$ (see \eqref{casi-def}).  
\end{remark}

\begin{remark}\label{Hqgb-rem6}
Since $Cas_0$ is the trace of the Casimir operator of the adjoint representation of $K$, we have that $Cas_0=0$ if and only if $\kg$ is abelian.  We also note that $Cas_i=0$ if and only if $\qg_i=0$ (i.e., $\pi_i(\kg)=\ggo_i$) for $i=1,\dots,s$; in particular, if at least one is zero then $\kg$ must be simple.  In the case when $Cas_i=0$ for all $i$, one obtains the Ledger-Obata space $M=K\times\dots\times K/\Delta K$.      
\end{remark}

\begin{remark}\label{Hqgb-rem}
The tuple $(A_1,\dots,A_{s-1})$ determines the metric $g_b$ up to scaling and for any given $g_b$, the tuple $(C_1,\dots,C_{s-1})$ determines the closed $3$-form $H_Q$ (see \eqref{HQgb-1} below).  In particular, the standard metric $g_{\kil}$ (i.e., $z_1=\dots=z_s=1$) corresponds to $A_j=-\frac{b_j}{a_j}$ for all $j=1,\dots,s-1$ and so any $H_Q$ is $\gk$-harmonic.  
\end{remark}

\begin{remark}\label{Hqgb-rem4}
Let $\mca^G$ denote the manifold of all $G$-invariant metrics on $M=G/K$.  If all the spaces $G_i/\pi_i(K)$, $i=1,\dots,s$ are isotropy irreducible and $K$ is either simple or one-dimensional, then the subspaces $\pg_1,\dots,\pg_{2s-1}$ are all $\Ad(K)$-irreducible and so 
$$
\dim{\mca^G}\geq s+\tfrac{s(s-1)}{2}=\tfrac{s(s+1)}{2},
$$ 
where equality holds if and only if $\pg_1,\dots,\pg_{s}$ are pairwise inequivalent (recall that $\pg_{s+1}\simeq\dots\simeq\pg_{2s-1}\simeq\kg$ and that the first $s$ $\pg_i$'s are pairwise inequivalent to the last $s-1$ ones by assumption).  Since each $g_b$ provides the submanifold $\{(x_1,\dots,x_{2s-1})_{g_b}\}$ of metrics covered by Theorem \ref{HQgb}, the subset $\tca$ of all $G$-invariant metrics covered by the theorem can be described as the union over the $s$-parametric space of all $g_b$'s of $(2s-1)$-dimensional submanifolds.  We do not know whether $\tca$ is equal to $\mca^G$ or not in the case when $\dim{\mca^G}=\tfrac{s(s+1)}{2}$.  
\end{remark}

\begin{proof}
It follows from Propositions \ref{ompt1}, \ref{ompt4} and \eqref{h} that $H_Q$ is $g$-harmonic if and only if condition \eqref{ompt5} holds.  On the other hand, it is easy to see that $\tfrac{D_{s+j}}{B_{s+j}} = 1+A_j$, which implies by using Remark \ref{Hqgb-rem2} that 
$$
E_{s+j}+Cas_0F_{s+j}=a_jA_j+b_j, \qquad \forall 1\leq  j\leq s-1.
$$
Thus conditions \eqref{harm} and \eqref{ompt5} are equivalent, concluding the proof.  
\end{proof}

The following corollaries of Theorem \ref{HQgb} answer all the questions made at the beginning of \S\ref{harm-sec} on the harmonicity of the closed $3$-forms $H_Q$'s.  

\begin{corollary}\label{HQgb-cor}
Let $M=G/K$ be an aligned homogeneous space as in the above theorem.  
\begin{enumerate}[{\rm (i)}] 
\item The standard metric $g_{\kil}$ is the unique (up to scaling) normal metric on $G/K$ such that any closed $3$-form $H_Q$ is $g_{\kil}$-harmonic.  

\item For any normal metric $g_b\ne g_{\kil}$ on $G/K$, there is a unique (up to scaling) closed $3$-form $H_Q$ which is $g_b$-harmonic.  

\item For any nonzero closed $3$-form $H_Q$ there exists a one-parameter family (up to scaling) of normal metrics $g_b(t)\ne\gk$ on $G/K$ such that $H_Q$ is $g_b(t)$-harmonic for all $t$.  
\end{enumerate}
\end{corollary}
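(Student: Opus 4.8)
The plan is to deduce all three statements directly from Theorem~\ref{HQgb} in the normal case $x_1=\dots=x_{2s-1}=1$. Here Remark~\ref{Hqgb-rem3} gives $a_j=\tfrac{1}{c_{j+1}}\dim\kg>0$ and $b_j=\big(\tfrac{1}{c_1}+\dots+\tfrac{1}{c_j}\big)\dim\kg>0$, and condition~\eqref{harm} collapses to
\[
(a_jA_j+b_j)\,C_k=(a_kA_k+b_k)\,C_j,\qquad 1\le j<k\le s-1.
\]
Writing $\mu_j:=a_jA_j+b_j$, this is exactly the statement that the two vectors $(C_1,\dots,C_{s-1})$ and $(\mu_1,\dots,\mu_{s-1})$ of $\RR^{s-1}$ are linearly dependent. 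I would then invoke Remark~\ref{Hqgb-rem}: $(A_1,\dots,A_{s-1})$ determines the normal metric $g_b$ up to scaling; for fixed $g_b$ the map $Q\mapsto(C_1,\dots,C_{s-1})$ is a linear isomorphism of the $(s-1)$-dimensional space $\{\,Q=\sum y_i\kil_{\ggo_i}:\sum y_i/c_i=0\,\}$ onto $\RR^{s-1}$; and $\gk$ is the point $A_j=-b_j/a_j$, at which every $\mu_j$ vanishes (so every $H_Q$ is $\gk$-harmonic, which is half of (i)). Throughout I take $s\ge3$; for $s=2$ the displayed condition is vacuous, cf.\ Remark~\ref{Hqgb-rem5}.

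Parts (i) and (ii) then become elementary. For (i) it remains to prove uniqueness: every $H_Q$ is $g_b$-harmonic iff the dependence relation holds for \emph{all} $(C_1,\dots,C_{s-1})\in\RR^{s-1}$ (surjectivity of the $C$-map), and plugging in coordinate vectors forces $\mu_1=\dots=\mu_{s-1}=0$, i.e.\ $A_j=-b_j/a_j$ for all $j$, i.e.\ $g_b=\gk$ up to scaling. For (ii): if $g_b\ne\gk$ then $(\mu_1,\dots,\mu_{s-1})\ne0$ (since $\mu_j=0$ for every $j$ would mean $A_j=-b_j/a_j$ for every $j$), so the dependence relation reads $(C_1,\dots,C_{s-1})\in\RR\,(\mu_1,\dots,\mu_{s-1})$, a line in $\RR^{s-1}$; its preimage under the isomorphism $Q\mapsto(C_1,\dots,C_{s-1})$ is one-dimensional, yielding a unique (up to scaling) $g_b$-harmonic $H_Q$, which is nonzero because the $C$-map is onto.

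For (iii) I would fix a nonzero $H_Q$, i.e.\ a nonzero $w=(w_1,\dots,w_s)$ with $w_i=y_i/c_i$ and $\sum w_i=0$, and set $s_j:=w_1+\dots+w_j$, so that $C_j=s_j+w_{j+1}A_j$ and $\mu_j$ are both affine in $A_j$. For all but finitely many $\lambda\in\RR$ the system $C_j=\lambda\mu_j$ $(1\le j\le s-1)$ has the unique solution $A_j=A_j(\lambda):=\tfrac{\lambda b_j-s_j}{w_{j+1}-\lambda a_j}$, and $H_Q$ is harmonic for the associated normal metric by construction. Since $A_j(\lambda)\to-b_j/a_j<0$ as $\lambda\to\infty$, all the $A_j(\lambda)$ are negative for $\lambda$ large, and any such negative tuple is realized by a genuine positive-definite normal metric (recursively: $A_1$ fixes $z_2/z_1>0$, then $A_2$ fixes $z_3>0$, and so on). This produces the desired one-parameter family; to see it misses $\gk$, note that $A_j(\lambda)\equiv-b_j/a_j$ for all $j$ would give $a_js_j=b_jw_{j+1}$ for all $j$, hence $C_j=0$ at $\gk$ for all $j$, hence $H_Q=0$, a contradiction — so some $A_{j_0}(\lambda)$ is a genuine non-constant M\"obius function of $\lambda$, attaining $-b_{j_0}/a_{j_0}$ at most once, and deleting that single value of $\lambda$ leaves a one-parameter family of normal metrics $\ne\gk$ with $H_Q$ harmonic.

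The part I expect to require the most care is (iii): I must make sure the tuples $(A_1,\dots,A_{s-1})$ coming from the parameter $\lambda$ genuinely arise from positive-definite bi-invariant metrics, and that the family is not secretly the single metric $\gk$. Both points are handled by the asymptotics $A_j(\lambda)\to-b_j/a_j$ as $\lambda\to\infty$ together with the observation that $H_Q\ne0$ rules out the simultaneous degeneracy $a_js_j=b_jw_{j+1}$ for every $j$; parts (i) and (ii) should be routine once the normal-metric form of \eqref{harm} and the isomorphism of Remark~\ref{Hqgb-rem} are available.
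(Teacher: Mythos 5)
Your proposal is correct and follows essentially the same route as the paper: both reduce everything to the normal-metric form of \eqref{harm}, read it as linear dependence of $(C_1,\dots,C_{s-1})$ and $(a_1A_1+b_1,\dots,a_{s-1}A_{s-1}+b_{s-1})$, and use the invertible triangular system \eqref{HQgb-1} to pass between $Q$ and $(C_1,\dots,C_{s-1})$, with $\gk$ characterized by $A_j=-b_j/a_j$. For part (iii) your parameter $\lambda$ is just the reciprocal of the paper's parameter $t=\tfrac{a_{k_1}A_{k_1}+b_{k_1}}{C_{k_1}(A_{k_1})}$; your explicit M\"obius formula for $A_j(\lambda)$ replaces the paper's local nonvanishing-derivative argument and handles the points (realizability by positive $z_i$'s, avoidance of $\gk$) at least as carefully.
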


\begin{proof}
Consider the tuple $(C_1,\dots,C_{s-1})$ determined by $Q$ and $g_b$.  Since
\begin{equation}\label{HQgb-1}
A\left[\begin{matrix}
y_1/c_1\\ 
\vdots\\ 
y_s/c_s
\end{matrix}\right] = \left[\begin{matrix}
C_1\\ 
\vdots\\ 
C_{s-1} \\
0
\end{matrix}\right], \qquad \mbox{where} \quad
A:=\left[\begin{matrix}
1&A_1&0&\cdots&0\\ 
1&1&A_2&\cdots&0\\ 
\vdots&&\ddots&\ddots&\vdots\\ 
1&\cdots&1&1&A_{s-1}\\
1&\cdots&1&1&1
\end{matrix}\right], 
\end{equation}
is an invertible $s\times s$ matrix (note that $\det{A}=(1-A_1)\dots(1-A_{s-1})$), we obtain that, conversely, given the metric $g_b$, the tuple $(C_1,\dots,C_{s-1})$ determines $Q$ .  Part (i) therefore follows from \eqref{harm} and Remark \ref{Hqgb-rem}.  

We now prove part (ii).  Given $g_b\ne \gk$, we consider the indexes $k_1,\dots,k_r$ such that $A_{k_i}\ne-\frac{b_{k_i}}{a_{k_i}}$, so $1\leq r\leq s-1$ and $a_jA_j+b_j=0$ for all $j\ne k_1,\dots,k_r$.  Recall from Remark \ref{Hqgb-rem3} the simpler forms of condition \eqref{harm} and the constants $a_j$ and $b_j$ in this case.  Thus the $g_b$-harmonicity condition \eqref{harm} for a given $H_Q$ is equivalent to $C_j=0$ for all $j\ne k_1,\dots,k_r$ and 
\begin{equation}\label{HQgb-2}
(a_{k_1}A_{k_1}+b_{k_1})C_{k_i} = (a_{k_i}A_{k_i}+b_{k_i})C_{k_1}, \qquad\forall i=2,\dots,r.
\end{equation}    
This implies that $C_{k_2},\dots,C_{k_r}$ and consequently $Q$, are determined by one parameter $C_{k_1}$ and the metric $g_b$, that is, $Q$ is determined up to scaling by $g_b$.  
 
Finally, we prove part (iii).  Given  $(y_1,\dots,y_{s})$ such that $\sum \frac{y_i}{c_i}=0$, we need to find a negative solution $(A_1,\dots, A_{s-1})$ to the system
\begin{equation}\label{HQgb-3s}
(a_{j}A_{j}+b_{j})C_{i} = (a_{i}A_{i}+b_{i})C_{j}, \qquad i,j=1,\dots,s-1,
\end{equation}
where $C_i=C_i(A_i):=\frac{y_1}{c_1} + \dots + \frac{y_i}{c_i} + A_i \frac{y_{i+1}}{c_{i+1}}$ (see \eqref{harm}).  It is therefore enough to consider only the equations with indexes $k_1,\dots,k_r$ such that $C_{k_i}\left(- \frac{b_{k_i}}{a_{k_i}}\right)\ne 0$, as all the remaining equalities hold by setting $A_{j}:=-\frac{b_j}{a_j}$ if $C_j\left(-\frac{b_j}{a_j}\right)=0$.  It is easy to see that $r\geq 1$ unless $Q=0$.  If we assume that each unknown $A_{k_i}$ is sufficiently close to $-\frac{b_{k_i}}{a_{k_i}}$ in order to have $C_{k_i}(A_{k_i})\ne 0$, then system \eqref{HQgb-3s} is equivalent to  
\begin{equation}\label{HQgb-4}
f_i(A_{k_i}):=\tfrac{a_{k_i}A_{k_i}+b_{k_i}}{C_{k_i}(A_{k_i})}  = \tfrac{a_{k_1}A_{k_1}+b_{k_1}}{C_{k_1}(A_{k_1})}, \qquad i=1,\dots,r.
\end{equation}
Since each function $f_i$ satisfies that $f_i\left(-\frac{b_{k_i}}{a_{k_i}}\right)=0$ and  
$f'_i\left(-\frac{b_{k_i}}{a_{k_i}}\right)= \frac{a_{k_i}}{C_{k_i}\left(-\frac{b_{k_i}}{a_{k_i}}\right)} \ne 0$, for any sufficiently small value $t=\frac{a_{k_1}A_{k_1}+b_{k_1}}{C_{k_1}(A_{k_1})}$, there exist negative numbers $A_{k_2},\dots,A_{k_{r}}$ such that \eqref{HQgb-4} holds, concluding the proof. 
\end{proof}

\begin{corollary}\label{HQgb-cor2-kabel}
Let $M=G/K$ be an aligned homogeneous space as in the above theorem such that $s\geq 3$ and $\kg$ is abelian (i.e., $Cas_0=0$ and $Cas_1,\dots,Cas_s>0$) and let $g_b$ be any normal metric.  
\begin{enumerate}[{\rm (i)}]
\item The set of all metrics of the form $g=(x_1,\dots,x_{2s-1})_{g_b}$ such that every closed $3$-form $H_Q$ is $g$-harmonic can be described as follows: 
\begin{enumerate}[{\rm a)}]
\item The numbers $x_2,\dots,x_s$ are recursively determined by $x_1$ in the following way:   
\begin{equation}\label{xjm1}
(-A_j)\tfrac{Cas_{j+1}}{x_{j+1}^2} = \tfrac{Cas_1}{x_1^2}+\dots+\tfrac{Cas_j}{x_j^2}, \qquad\forall j=1,\dots,s-1.  
\end{equation}
\item The $s$ numbers $x_1,x_{s+1},\dots,x_{2s-1}$ are independent positive parameters.    
\end{enumerate}

\item For any other $G$-invariant metric $g=(x_1,\dots,x_{2s-1})_{g_b}$, there is a unique (up to scaling)  $3$-form $H_Q$ which is $g$-harmonic.  
\end{enumerate}
\end{corollary}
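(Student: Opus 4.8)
The plan is to reduce everything to Theorem~\ref{HQgb} specialized to the abelian case, after which both statements become elementary linear algebra. First I would record the structural facts that make $Cas_0=0$ exploitable: since $\kg$ is abelian, $Cas_0=0$ (Remark~\ref{Hqgb-rem6}); and since each $\pi_i(\kg)$ is then an abelian, hence proper, subalgebra of the simple (in particular non-abelian) Lie algebra $\ggo_i$, one has $\qg_i\neq 0$ and therefore $Cas_i=\tfrac{1}{c_i}\tr\cas_{\qg_i,-\kil_{\ggo_i}|_{\pi_i(\kg)}}>0$ for every $i=1,\dots,s$. With $Cas_0=0$ the constants of Theorem~\ref{HQgb} simplify, via Remark~\ref{Hqgb-rem2}, to $a_j=Cas_{j+1}/x_{j+1}^2$ and $b_j=Cas_1/x_1^2+\dots+Cas_j/x_j^2$, while $A_j=-\tfrac{c_{j+1}}{z_{j+1}}\bigl(\tfrac{z_1}{c_1}+\dots+\tfrac{z_j}{c_j}\bigr)<0$. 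Writing $P_j:=x_{s+j}(a_jA_j+b_j)$, Theorem~\ref{HQgb} says that $H_Q$ is $g$-harmonic if and only if $P_k\,C_j=P_j\,C_k$ for all $1\le j<k\le s-1$. I would also recall from \eqref{HQgb-1} that $Q\mapsto(C_1,\dots,C_{s-1})$ is a linear isomorphism from $\{Q:Q|_{\kg\times\kg}=0\}$ onto $\RR^{s-1}$; in particular $(C_1,\dots,C_{s-1})=0$ exactly when $Q=0$.

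Part~(i) is then a tautology about this system. ``Every $H_Q$ is $g$-harmonic'' means $P_kC_j=P_jC_k$ holds for every $(C_1,\dots,C_{s-1})\in\RR^{s-1}$; since $s\ge 3$, for each index $m$ there is a pair $\{j,k\}$ with $j<k$ containing $m$, and choosing the coordinate opposite to $m$ equal to $1$ and all others $0$ forces $P_m=0$. Conversely $P_1=\dots=P_{s-1}=0$ makes the condition vacuous. As $x_{s+j}>0$, $P_j=0$ is precisely $a_jA_j+b_j=0$, i.e.\ $(-A_j)\tfrac{Cas_{j+1}}{x_{j+1}^2}=\tfrac{Cas_1}{x_1^2}+\dots+\tfrac{Cas_j}{x_j^2}$, which is \eqref{xjm1}. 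Since $-A_j>0$, $Cas_{j+1}>0$, and the right-hand side is a positive number once $x_1,\dots,x_j$ are fixed, the relations \eqref{xjm1} for $j=1,\dots,s-1$ successively determine $x_2,\dots,x_s$ as unique positive numbers in terms of $x_1$; moreover none of $x_1,x_{s+1},\dots,x_{2s-1}$ occurs in them, so these are free positive parameters. This is (i)a)--b).

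For part~(ii), if $g$ is not of that form, then by part~(i) some $a_{j_0}A_{j_0}+b_{j_0}\neq0$, hence $P_{j_0}\neq0$. The harmonicity condition $P_kC_j=P_jC_k$ then forces $C_m=(P_m/P_{j_0})\,C_{j_0}$ for every $m$ (apply it to the pair $\{j_0,m\}$, which is trivial when $m=j_0$), i.e.\ $(C_1,\dots,C_{s-1})$ lies on the line $\RR\cdot(P_1,\dots,P_{s-1})$; conversely every point of that line satisfies the condition. Since this line is one-dimensional and corresponds under the above isomorphism to a one-dimensional space of $Q$'s, with $0$ matching $Q=0$, there is exactly one $g$-harmonic $H_Q$ up to scaling.

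I do not expect a serious obstacle; the points needing care are the verification that $Cas_i>0$ for all $i$ (which uses that a simple Lie algebra is non-abelian), the bookkeeping of which of $x_1,\dots,x_{2s-1}$ enters each equation $a_jA_j+b_j=0$, so that the free-parameter count is the claimed $s$, and the use of $s\ge 3$ in the step isolating each $P_m$ in part~(i) — for $s=2$ the condition is vacuous and every $H_Q$ is $g$-harmonic, cf.\ Remark~\ref{Hqgb-rem5}.
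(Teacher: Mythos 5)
Your proposal is correct and follows essentially the same route as the paper: specialize Theorem \ref{HQgb} to $Cas_0=0$ via Remark \ref{Hqgb-rem2}, observe that ``every $H_Q$ is $g$-harmonic'' forces $a_jA_j+b_j=0$ for all $j$ (which is \eqref{xjm1} and leaves $x_1,x_{s+1},\dots,x_{2s-1}$ free), and for part (ii) note that the admissible $(C_1,\dots,C_{s-1})$ form the line $\RR(P_1,\dots,P_{s-1})$, which under the isomorphism \eqref{HQgb-1} gives a unique $H_Q$ up to scaling. Your write-up merely makes explicit what the paper delegates to the proof of Corollary \ref{HQgb-cor}(ii).
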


\begin{proof}
We observe that according to Theorem \ref{HQgb} and Remark \ref{Hqgb-rem2}, any $H_Q$ is $g$-coclosed for a metric $g=(x_1,\dots,x_{2s-1})_{g_b}$ if and only if $a_jA_j+b_j=0$ for any $j=1,\dots,s-1$, which is equivalent to condition \eqref{xjm1} and so part (i) follows.    

Since the metrics involved in part (ii) are precisely those for which there is at least one $j$ such that $a_jA_j+b_j\ne 0$, the proof of this part follows in much the same way as the proof of part (ii) of Corollary \ref{HQgb-cor}.  
\end{proof}

\begin{corollary}\label{HQgb-cor2}
Let $M=G/K$ be an aligned homogeneous space as in the above theorem such that $s\geq 3$, $Cas_1>0$ and $\kg$ is not abelian (i.e., $Cas_0>0$) and let $g_b$ be any normal metric.  Then the set of all metrics of the form $g=(x_1,\dots,x_{2s-1})_{g_b}$ such that every closed $3$-form $H_Q$ is $g$-harmonic depends on the three parameters $x_1,x_s,x_{s+1}$ as follows: 
\begin{enumerate}[{\rm (i)}]
\item 
\begin{enumerate}[{\rm a)}]
\item $x_{s+1}=x_{s+2}=\dots=x_{2s-2}$.  

\item The numbers $x_2,\dots,x_{s-1}$ are recursively determined by $x_1$ and $x_{s+1}$ in the following way: 
\begin{equation}\label{xjm2}
(-A_j)\tfrac{Cas_{j+1}}{x_{j+1}^2} = \tfrac{Cas_1}{x_1^2}+\dots+\tfrac{Cas_j}{x_j^2} + 
Cas_0(j+A_j)\tfrac{1}{x_{s+1}^2}, \qquad\forall j=1,\dots,s-2,
\end{equation}
which is positive if and only if $x_{s+1}>u$ for a certain positive number $u$ depending on $x_1$.  Note that only the $x_j$'s such that $Cas_j>0$ are involved in the above equations, if $Cas_{j}=0$ then $\pg_{j}=0$ and so $x_j$ does not appear in $g$.  

\item The number $x_{2s-1}$ is determined by $x_1$, $x_s$ and $x_{s+1}$ as follows:
\begin{equation}\label{xjm3}
\tfrac{1}{x_{2s-1}^2} Cas_0(1-A_{s-1}) = \tfrac{Cas_1}{x_1^2}+\dots+\tfrac{Cas_{s-1}}{x_{s-1}^2} 
+A_{s-1}\tfrac{Cas_s}{x_s^2} + sCas_0\tfrac{1}{x_{s+1}^2},
\end{equation}
which is positive if and only if $x_s>v$ for a certain positive number $v$ depending on $x_1$ and $x_{s+1}$.   
\end{enumerate}

\item For any other $G$-invariant metric $g=(x_1,\dots,x_{2s-1})_{g_b}$, there exists at least one $3$-form $H_Q$ which is $g$-harmonic. 
\end{enumerate}
\end{corollary}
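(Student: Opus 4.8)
The plan is to turn ``every closed $3$-form $H_Q$ is $g$-harmonic'' into a condition that must hold identically in the parameters of $Q$, and then read off the three surviving free parameters. Since by \eqref{HQgb-1} the matrix $A$ is invertible (its determinant is $(1-A_1)\cdots(1-A_{s-1})\ne 0$ because each $A_j<0$), the assignment $Q\mapsto(C_1,\dots,C_{s-1})$ is a linear bijection of $\{Q:Q|_{\kg\times\kg}=0\}$ onto $\RR^{s-1}$; hence, by Theorem \ref{HQgb}, ``every $H_Q$ is $g$-harmonic'' is equivalent to the requirement that \eqref{harm} hold for \emph{all} $(C_1,\dots,C_{s-1})\in\RR^{s-1}$. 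For each fixed pair $j<k$ the relation \eqref{harm} is homogeneous linear in $C_j$ and in $C_k$ separately, with $C_j,C_k$ independent coordinates, so it holds identically if and only if both coefficients vanish:
$$
a_jA_j+b_j=0\ \ (1\le j\le s-2), \qquad a_kA_k+b_k+2Cas_0\Big(\tfrac{1}{x_{s+j}^2}-\tfrac{1}{x_{s+k}^2}\Big)=0\ \ (1\le j<k\le s-1).
$$
This system encodes the whole statement ``every $H_Q$ is $g$-harmonic''.

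From here I would extract (i). For a pair $j<k\le s-2$ the first family gives $a_kA_k+b_k=0$, so the second forces $\tfrac{1}{x_{s+j}^2}=\tfrac{1}{x_{s+k}^2}$; as $Cas_0>0$, this yields $x_{s+1}=\dots=x_{2s-2}$, i.e.\ (i)a). Plugging in the explicit $a_j=\tfrac{Cas_{j+1}}{x_{j+1}^2}+\tfrac{Cas_0}{x_{s+j}^2}$ and $b_j=\sum_{i\le j}\tfrac{Cas_i}{x_i^2}+Cas_0\sum_{i\le j}\tfrac{1}{x_{s+i}^2}$ of Remark \ref{Hqgb-rem2}, and substituting these together with the equalities $x_{s+i}=x_{s+1}$ and the sign $A_j<0$ into $a_jA_j+b_j=0$, turns it, for $j=1,\dots,s-2$, into precisely \eqref{xjm2}; solving these recursively expresses $x_{j+1}$ through $x_1,\dots,x_j$ and $x_{s+1}$, hence inductively through $x_1$ and $x_{s+1}$ only --- this is (i)b). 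The single remaining equation of the second family is the one with $k=s-1$; specializing it to $j=1$, substituting the explicit $a_{s-1},b_{s-1}$ (with $x_{s+(s-1)}=x_{2s-1}$) and the already-established equalities, a short rearrangement produces \eqref{xjm3}, which determines $x_{2s-1}$ from $x_1,x_s,x_{s+1}$, i.e.\ (i)c). Counting parameters: $x_1,x_s,x_{s+1}$ are free while $x_2,\dots,x_{s-1}$, $x_{s+2},\dots,x_{2s-2}$ and $x_{2s-1}$ are determined, giving the required total of $2s-1$; when some $Cas_j=0$ the block $\pg_j$ is zero, so $x_j$ and the equation defining it simply drop out.

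The positivity clauses I would settle by monotonicity. In \eqref{xjm2}, as $x_{s+1}\to\infty$ the right-hand side tends to $\sum_{i\le j}\tfrac{Cas_i}{x_i^2}>0$ (here $Cas_1>0$ is used), while the term $Cas_0(j+A_j)\tfrac{1}{x_{s+1}^2}$ pushes it downward as $x_{s+1}$ shrinks (when $j+A_j<0$); tracking how each $x_i$ moves with $x_{s+1}$ along the recursion shows that the set of $x_{s+1}$ for which all of $x_2,\dots,x_{s-1}$ come out real and positive is a half-line $(u,\infty)$ with $u=u(x_1)$, as claimed. Similarly, in \eqref{xjm3} one has $1-A_{s-1}>0$, $Cas_0>0$, and the right-hand side is a positive quantity plus $A_{s-1}\tfrac{Cas_s}{x_s^2}$ with $A_{s-1}<0$; it is strictly increasing in $x_s$, positive for $x_s$ large and (when $Cas_s>0$) negative for $x_s$ small, so $x_{2s-1}$ is real and positive exactly for $x_s>v$, with $v=v(x_1,x_{s+1})$.

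Finally, for (ii): a metric $g$ not in the family of (i) violates at least one equation of the system displayed above, so not all of the $C_j$ remain free, and I would analyze the residual homogeneous linear system \eqref{harm} on $(C_1,\dots,C_{s-1})$ in the same spirit as the proof of Corollary \ref{HQgb-cor}(ii) (and of Corollary \ref{HQgb-cor2-kabel}(ii)): organizing the indices according to where $a_jA_j+b_j$ and the perturbed coefficients fail to vanish, one exhibits a $g$-harmonic $H_Q$. The main obstacle, running through all of this, is the extra cross-terms $2Cas_0\big(\tfrac{1}{x_{s+j}^2}-\tfrac{1}{x_{s+k}^2}\big)$ --- absent both in the abelian case (Corollary \ref{HQgb-cor2-kabel}) and in the normal case (Corollary \ref{HQgb-cor}): they couple the harmonicity equations across distinct indices, which is exactly what forces the rigid equalities $x_{s+1}=\dots=x_{2s-2}$ in (i)a), and it is this coupling that must be carefully controlled both when solving the residual system in (ii) and when verifying the monotonicity behind the thresholds $u$ and $v$.
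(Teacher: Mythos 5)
Your proposal is correct and follows essentially the same route as the paper's proof: using the invertibility of the matrix in \eqref{HQgb-1} to make the $C_j$'s independent, forcing both coefficients in \eqref{harm} to vanish, deducing $x_{s+1}=\dots=x_{2s-2}$ from the pairs $j<k\le s-2$, and obtaining \eqref{xjm2} and \eqref{xjm3} by substituting the expressions of Remark \ref{Hqgb-rem2} into $a_jA_j+b_j=0$ and into the remaining $k=s-1$ equation. Your deferrals (the monotonicity behind the thresholds $u,v$, and the reduction of part (ii) to the argument of Corollary \ref{HQgb-cor}, whose coupling by the cross-terms you rightly flag as the delicate point) are at the same level of detail as the paper's own proof, which likewise leaves these points as assertions.
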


\begin{proof}
According to \eqref{harm}, if any $H_Q$ is $g$-coclosed, then  
$$
a_kA_k+b_k+2Cas_0\left(\tfrac{1}{x_{s+1}^2}-\tfrac{1}{x_{s+k}^2}\right)=0, \qquad\forall 1<k,
$$
and $a_jA_j+b_j=0$ for any $j=1,\dots,s-2$.  This implies that $x_{s+1}=\dots=x_{2s-2}$ (i.e., part a)), part b) follows and the only remaining equation to imply \eqref{harm} is:
\begin{equation}\label{xjm3-form}
a_{s-1}A_{s-1}+b_{s-1}+2Cas_0\left(\tfrac{1}{x_{s+1}^2}-\tfrac{1}{x_{{2s-1}}^2}\right)=0,  
\end{equation}
from which part c) easily follows.  

Since the metrics involved in part (ii) are precisely those for which either there is at least one $1\leq j\leq s-2$ such that $a_jA_j+b_j\ne 0$ or the left hand side of \eqref{xjm3-form} in non-zero, the proof of this part follows in much the same way as the proof of existence in part (ii) of Corollary \ref{HQgb-cor}, the uniqueness may not hold. 
\end{proof}

\begin{example}\label{HQgb-cor3}
Let $M=G/K$ be an aligned homogeneous space as in the above theorem.  If $Cas_1=\dots=Cas_s=0$, then $M=K\times\dots\times K/\Delta K$ is a Ledger-Obata space ($K$ simple) and for any normal metric $g_b$, the unique metric $g=(x_{s+1},\dots,x_{2s-1})_{g_b}$ satisfying that all the closed $3$-forms $H_Q$ are $g$-harmonic is given, up to scaling, by
$$
g=\left(1,\dots,1,t\right)_{g_b}, \qquad \mbox{where}\quad t:=\sqrt{\tfrac{1-A_{s-1}}{s}}, \quad A_{s-1}=-\tfrac{z_1+\dots+z_{s-1}}{z_{s}}.  
$$
Indeed, it follows from Corollary \ref{HQgb-cor2} and Remark \ref{Hqgb-rem2} that $x_{s+1}=\dots=x_{2s-2}$ and 
$$
\tfrac{1}{x_{2s-1}^2} (1-A_{s-1}) =  s\tfrac{1}{x_{s+1}^2},
$$ 
and the formula for $A_{s-1}$ follows from the fact that $c_1=\dots=c_s=s$.  In the case when $g_b=\gk$, $A_{s-1}=1-s$ and so $t=1$ and $g=\gk$.  
\end{example}

\begin{example}\label{GLO2}
Consider the aligned homogeneous space $M=H\times\dots\times H/\Delta K$ as in Example \ref{GLO}, where $H$ is a simple Lie group and $K\subset H$ is a proper closed simple subgroup, together with a $\kil_\hg$-orthogonal reductive decomposition $\hg=\kg\oplus\qg$.  If $\kil_\kg=c\kil_\hg|_\kg$, then  
$$
c_1=\dots=c_s=s, \qquad Cas_0=\tfrac{c\dim{\kg}}{s}, \qquad Cas_1=\dots=Cas_s= \tfrac{(1-c)\dim{\kg}}{s}. 
$$
According to Corollary \ref{HQgb-cor2}, if we fix $\gk$ as the background metric (in particular, $A_{s-1}=1-s$), then every $H_Q$ is $g$-harmonic with respect to a given metric $g=(x_1,\dots,x_{2s-1})_{\gk}$ if and only if $x_{s+1}=\dots=x_{2s-2}$,  
$$
\tfrac{1}{x_{j+1}^2} = \tfrac{1}{j}\left(\tfrac{1}{x_1^2}+\dots+\tfrac{1}{x_j^2}\right),  \qquad \forall j=1,\dots,s-2,
$$
which implies that $x_1=\dots=x_{s-1}$, and so
$$
\tfrac{1}{x_{2s-1}^2}  = \tfrac{(1-c)(s-1)}{cs}\left(\tfrac{1}{x_{1}^2} 
-\tfrac{1}{x_s^2}\right) + \tfrac{1}{x_{s+1}^2}.
$$
\end{example}

\end{document}